\newtheorem{df}{Def}[section]  
\newtheorem{definition}[df]{Definition}
\newtheorem{thm}[df]{Theorem}
\newtheorem{cor}[df]{Corollary}
\newtheorem{lem}[df]{Lemma}
\newtheorem{example}[df]{Example}
\newcommand{\FU}[2]{{#2}^{#1}}
\author{Fausto Barbero, University of Helsinki}
\title{Some observations about generalized quantifiers in logics of imperfect information}
\begin{document}
\maketitle
%
\begin{abstract}
We analyse the two definitions of generalized quantifiers for logics of dependence and independence that have been proposed by F. Engstr\"om, comparing them with a more general, higher-order definition of team quantifier. We show that Engstr\"om's definitions (and other quantifiers from the literature) can be identified, by means of appropriate lifts, with special classes of team quantifiers. We point out that the new team quantifiers express a quantitative and a qualitative component, while Engstr\"om's quantifiers only range over the latter. We further argue that Engstr\"om's definitions are just embeddings of the first-order generalized quantifiers into team semantics, and fail to capture an adequate notion of team-theoretical generalized quantifier, save for the special cases in which the quantifiers are applied to flat formulas. We also raise several doubts concerning the meaningfulness of the monotone/nonmonotone distinction in this context. In the appendix we develop some proof theory for Engstr\"om's quantifiers.
\end{abstract}

\color{red}{This is the manuscript accepted for publication by the Review of Symbolic Logic, copyright by Cambridge University Press. The version of record can be found at \href{https://doi.org/10.1017/S1755020319000145}{https://doi.org/10.1017/S1755020319000145} . }

\color{blue}{The article has been prepared under the Academy of Finland
project 286991, ``Dependence and Independence in Logic: Foundations and Philosophical
Significance,'' and revised under the Academy of Finland project 316460, ``Semantics of
causal and counterfactual dependence.'' }

\color{black}

    \section{Introduction} \label{GENQUAN}

Languages of imperfect information are a family of logical formalisms which allow the semantical analysis of notions, such as dependence and independence, that cannot be captured by classical first-order logic. Stemming from the partially ordered quantifiers of \cite{Hen1961}, logics have been developed which express in full generality \emph{functional dependence}: Independence-Friendly logic \citep{HinSan1989,ManSanSev2011} and Dependence logic \citep{Vaa2007}. Following similar approaches, newer logical languages were introduced which extend first-order logic with other notions of (in)dependence, such as \emph{database dependencies} \citep{GraVaa2013,Gal2012}, \emph{probabilistic independence} \citep{DurHanKonMeiVir2016}, \emph{quantum probabilities} \citep{HytPaoVaa2012} and \emph{causal dependence} \citep{BarSan2018}. 

Earlier presentations of these logics ground the semantics on intuitions related to Skolemization \citep{San1993} or to semantic games \citep{HinSan1989}; however, the unifying background of many of the recent developments in the field is instead  \emph{team semantics} \citep{Hod97,Hod97b,Vaa2007}. Team semantics is a generalization of the Tarski-style, compositional semantics of first-order logic; according to it, the notion of ``satisfaction of a formula by an assignment'' is replaced by the notion of ``satisfaction of a formula by a team'' (a \emph{set} of assignments). Within this more general framework, it becomes possible to embed in logical formulas notions that cannot be expressed in first-order logic; new concepts can be incorporated in the form of atomic formulas, special quantifiers, or new logical operators. It is in this spirit that \cite{Eng2012} -- followed by \cite{EngKon2013} and \cite{EngKonVaa2013} -- proposed two definitional schemes for the introduction of generalized quantifiers in Dependence logic; one scheme for the (upwards) monotone quantifiers, and a more complex one intended for capturing also the non-monotone case. He also extended his definitions to slashed and backslashed quantifiers, along the lines, respectively, of Independence-Friendly ($IF$) logic and Dependence-Friendly ($DF$) logic. Much in the same spirit, \cite{Sev2014} gave a definition of the ``most'' quantifier in the context of Independence-Friendly ($IF$) logic. 


In the present paper, we will analyze some aspects of this treatment of generalized quantifiers, focusing on extensions of first-order, $IF$ and $DF$ logic. We will also present a more general, higher-order notion of team-theoretical generalized quantifier, and see how the quantifiers of Engstr\"om fit in this more general scheme.

The plan of the paper is as follows:\\
1) We will argue that Engstr\"om's (and Sevenster's) reading of the generalized quantifiers is biased by misleading parallels with first-order semantics. Once a proper, team-semantical, reading of these quantifiers is applied, the need to restrict Engstr\"om's (first) definition to the monotone case disappears. Yet, the class of quantifiers which is captured by this definition in the nonmonotone case   is not the class of (Mostowski's) first-order non-monotone quantifiers. \\
2) We will introduce (section \ref{SECSECOND}) a more general definition of generalized quantifier (\emph{team quantifiers}), illustrate the notion with some examples, compare it with the so-called second-order generalized quantifiers and show (sections \ref{REA} and \ref{SECNONMON}) that Engstr\"om's first and second definition of generalized quantifiers can be seen as special cases of this more general notion.\\
3) We will point out that the second definition of Engstr\"om's (and the first one, when restricted to the monotone case) manages to model correctly the first-order generalized quantifiers, but only when the quantifiers are applied to \emph{flat} formulas.\\
4) We will further defend, with a few arguments and examples, the extension of Engstr\"om's first clause beyond the monotone case. Among other things, a) we show (in a technical appendix) that many good synctactical properties of $IF$ logic are preserved when the new quantifiers are added, 
and we see that monotonicity plays no role in these proofs; b) we investigate (at the end of section \ref{SECENG}) which class of Engstr\"om quantifiers preserves the locality property of logics, concluding that the relevant property in this context is not monotonicity, but rather union closure.

Section \ref{SECNOT} is a glossary of the notations that are used most commonly throughout the paper; section \ref{SEM} reviews the syntax and semantics of $IF$ and $DF$ logics; and section \ref{SECENG} reviews thoroughly Engstr\"om's quantifiers in their most basic version.

\section{General notation} \label{SECNOT} 

We summarize here our use of letters and notations. Most of the definitions are deferred to later sections. The reader can skip this section and refer to it as needed.

Greek letters denote formulas, with the exception of $\sigma$, which is reserved for signatures.

Small letters ($u,v,x,y,z...$) denote variables (standing for individuals), while  $U,V,W$ denote finite sets of such variables. The letters $i$ and $j$ are reserved for indexes.

The letter $M$ denotes a first-order structure, i.e. a pair $(dom(M),I_M)$, where $dom(M)$ is the domain of the structure (a set), while $I_M$ is a function which assigns appropriate interpretations to the elements of the signature of $M$ (constant, relation and function symbols). We abuse notation and write $M$ for $dom(M)$ when there is no risk of ambiguity; in particular, we will often write, for brevity, $\wp(M)$ instead of  $\wp(dom(M))$ (the power set of the domain of $M$). The letters $P,S$ denote either 1) subsets of $dom(M)^n$ for some structure $M$, or 2)(first-order) relation symbols (either in the role of constants or variables); in this second case, we write $P^M,S^M$ as abbreviations for $I_M(P),I_M(S)$. Following a common convention, we sometimes refer to a structure as $(M,S_1,\dots,S_n)$ if we want to emphasize that the domain of the interpretation function of the structure is the set of relation symbols $\{S_1,\dots,S_n\}$.

The letter $Q$, and its variants ($Q', Q_i$...) are used to denote (global) generalized quantifiers (to be reviewed later in the text), possibly including $\forall$ and $\exists$. The letter $R$ is reserved for generalized quantifiers \emph{distinct from $\forall$ and $\exists$}. The same letters will be used to denote quantifier symbols in the object language. Sets of quantifiers are denoted by the italic $\mathcal Q$ or $\mathcal R$. The local quantifiers (relative to a domain $M$) corresponding to $Q$, resp. $R$, are denoted as $Q^M$, resp. $R^M$. The symbols $\hat{Q}$, resp. $\hat{R}$ are reserved for global team quantifiers (to be introduced in section \ref{SECSECOND}) and the corresponding symbols in the object languages; $\hat{Q}^{M,X}$, resp. $\hat{R}^{M,X}$ denote their local versions 
($X$ here denotes a team, to be defined below).  Sets of team quantifiers are denoted by $\hat{\mathcal Q}$ or $\hat{\mathcal R}$.

More generally, throughout the paper we need symbols referring to inviduals and to first, second and third order relations. These four levels are distinguished by using, respectively, small, capital, gothic and capped letters. We summarize these conventions in a table; notice the special treatment of symbols for local (Mostowski) quantifiers.  (The local team quantifiers that we introduce in this paper, instead, do not fit well in this table; see section \ref{SECSECOND} for a discussion of their location in the type hierarchy.)

\vspace{10pt}

\noindent\begin{tabular}{|c|c|c|c|}
\hline
\textbf{Type}             & \textbf{Member of}                & \ \textbf{Variables/constants} \     &  \  \textbf{Local quantifiers} \ \\
\hline
individual       & $dom(M)$                 & $u,v,x,y,z$     &    				\\  
\hline
first-order      & $\wp(dom(M))$            & $P, S$            & 				\\  
\hline
 \ second-order \       & $\wp(\wp(dom(M)))$   & $\textgoth{P}$, $\textgoth{S}$    & $Q^M$, $R^M$				  \\  
\hline
third-order       & \ $\wp(\wp(\wp(dom(M))))$ \ &  $\hat P, \hat S$            & 			\\  
\hline
\end{tabular}

\vspace{10pt}

Notice that what we mean by placing e.g. $P$ in the ``first-order'' row is that $P$ is a symbol which is used to denote a first-order set (a member of $\wp(dom(M))$), or more generally a relation in $\wp(dom(M)^n)$, for some $n$. Somewhat confusingly, $P$ is sometimes called in the literature a ``second-order variable''; we will prefer the locution ``variable for first-order relations''.

The letters $s,s',t$... are reserved for assignments.
An \textbf{assignment} $s$ on a structure $M$ is a function $dom(s)\rightarrow dom(M)$, where $dom(s)$ is any finite set of variables. We will use some operations on assignments and relations among them. Assuming $s$ to be an assignment $dom(s)\rightarrow dom(M)$,
\begin{itemize}
\item For any $a\in dom(M)$ and any variable $v$, we denote as $s(a/v)$ the assignment of domain $dom(s)\cup \{v\}$ given by
\[
(s(a/v))(x) = \left\{\begin{array}{ll}a & \text{ if $x$ is $v$ }     \\
 s(x) & \text{ if $x\in dom(s)\setminus\{v\}$}.
\end{array}
\right.
\]  
\item $s(a_1/v_1,\dots,a_n/v_n)$ stands for $s(a_1/v_1)\dots(a_n/v_n)$.
\item For any set of variables $V\subseteq dom(s)$, we write $s_{\upharpoonright V}$ for the restriction of $s$ to $V$.
\item For any variable $v\in dom(s)$, we write $s_{-v}$ for the restriction of $s$ to $dom(s)\setminus \{v\}$.

\end{itemize}

The letters $X,Y,Z$ 
 are reserved for teams (sets of assignments with a common domain). The letters $F,G,H$ denote functions having a team as domain.

We will use different notations for different notions of extension by generalized quantifier. If $L$ is a language and $\mathcal Q$ a set of quantifiers, $L^{\mathcal Q}$ will denote the usual Mostowski extension of $L$ (reviewed in section \ref{SECENG}); $L(\mathcal Q)$ will denote the extension in the sense of Engstr\"om's first definition of quantifier (also reviewed in section \ref{SECENG}); extensions according to Engstr\"om's second definition will be denoted as $L^b(\mathcal Q)$. $L[\hat{\mathcal Q}]$ will denote an extension by \emph{team} quantifiers, according to the semantics we introduce in section \ref{SECSECOND} If $\mathcal Q = \{Q\}$ (resp. $\hat{\mathcal Q} = \{\hat Q\}$) is a singleton, we simply write $L^{ Q}$, $L(Q)$, $L^b(Q)$ (resp. $L[\hat{Q}]$), omitting the curly braces.

The symbols $[\psi]^{\vec v}_{M,s}$, $[\psi]^{\vec v}_{M,X}$, $[\psi]^{\vec v,V}_{M,X}$, $|\psi|^{\vec v}_{M}$ will denote various notions of ``meaning'' of the formula $\psi$ relative to a few parameters. All these notions are introduced in section \ref{SECSECOND}

\section{Syntax and semantics of $IF$, $IF^*$ and $DF$ logics} \label{SEM} 



We summarize here the syntax and the team semantics of $IF$ logic. A justification for the semantical rules could be given in terms of a game-theoretical semantics; the interested reader may consult \cite{ManSanSev2011}. 

The syntax of $IF$ formulas is similar to that of first-order logic: 
given a signature $\sigma$, we define $\sigma$-terms and atomic formulas as for first-order logic, and the formulas of $IF(\sigma)$ are given by following clauses:
\[
\alpha  \ | \ \neg \alpha \ | \ \psi \land \chi \ | \ \psi \lor \chi \ | \ (\exists v/V)\psi \ | \ (\forall v/V)\psi  
\]
where 
 $v$ is a variable, $V$ a finite set of variables, $\psi$ and $\chi$ are $IF(\sigma)$ formulas, and $\alpha$ is an atomic formula of signature $\sigma$. In the following, we will usually omit reference to the signature. 

The set $V$ occurring in $(\exists v/V)$ or $(\forall v/V)$ is called the \textbf{slash set} of the quantifier. For brevity, we will write $\exists v$ and $\forall v$ for the quantifiers $(\exists v/\emptyset)$ and $(\forall v/\emptyset)$ which have empty slash set. A formula whose quantifiers all have empty slash set will be said to be \textbf{first-order}.

 The set of free variables of a formula $\psi$, $FV(\psi)$, should be defined with care, so as to include also those variables from slash sets that do not fall in the scope of quantification over the same variable. The following inductive clauses do the job:
\begin{itemize}
\item For $\alpha$ atomic $IF$ formula, $FV(\alpha) = FV(\neg\alpha)$ is the set of all the variables that occur in $\alpha$.
\item $FV(\psi\land\chi) = FV(\psi\lor\chi) = FV(\psi)\cup FV(\chi)$.
\item $FV((\exists v/V)\psi) = FV((\forall v/V)\psi) = (FV(\psi)\setminus \{v\}) \cup V$.
\end{itemize}
A variable which is not free in a formula is said to be bound. So, for example, the $IF$ formula $\exists z(\exists y/\{x\})R(y,z)$ has exactly one free variable, $x$, in spite of the fact that $x$ does not occur in the atomic part of the formula; $z$ and $y$ are bound. 

As usual, if $FV(\varphi) = \emptyset$, $\varphi$ is called a sentence.

Now we move to team semantics. Given a finite set of variables $V$ and a structure $M$, a \textbf{team} $X$ of domain $V$ on $M$ is a set of assignments $V \rightarrow dom(M)$. We will denote the domain $V$ as $dom(X)$.  We can lift many notational conventions (as they were fixed in section \ref{SECNOT}) from the level of assignments to teams:
\begin{itemize}
\item  For any set of variables $V\subseteq dom(X)$, we write $X_{\upharpoonright V} := \{s_{\upharpoonright V} \ | \ s\in X \}$.
\item For any variable $v\in dom(X)$, we write $X_{-v}:= \{s_{-v} \ | \ s\in X \}$.
\item Given a team $X$ and a sequence $v_1,\dots,v_n$ of variables in $dom(X)$, we can define an associated $n$-ary relation $X(v_1,\dots,v_n) := \{(s(v_1),\dots,s(v_n)) \ | \ s\in X\}$.
\end{itemize}

The following operators on teams correspond, respectively, to universal and existential quantification:

\begin{itemize}
\item Given a team $X$ on a structure $M$, and a variable $v$, the \textbf{duplicated team} $X[M/v]$ is defined as the team $\{s(a/v) \ | \ s\in X,a\in dom(M)\}$.
\item Given a team $X$ over a structure $M$, a variable $v$ and a function $F: X \rightarrow \wp(M)$, the \textbf{supplemented team} $X[F/v]$ is defined as the team $\{s(a/v) \ | \ s\in X,a\in F(s)\}$. 
More generally, for a sequence of variables $\vec v = v_1,\dots v_n$ and $F: X \rightarrow \wp(M^n)$, we can define $X[F/\vec v] = \{s(a_1,\dots,a_n / v_1,\dots,v_n) \ | \ s\in X, (a_1,\dots,a_n)\in F(s) \}$.
\end{itemize}

Finally, we need some machinery in order to take care of statements of independence among quantifiers.

\begin{itemize}
\item Given two assignments $s,s'$ with the same domain, and a set of variables $V$, we say that $s$ and $s'$ are \textbf{$V$-equivalent}, and we write $s\sim_V s'$, if $s(x) = s'(x)$ for all variables $x\in dom(s)\setminus V$.
\item Given a team $X$, a structure $M$ and a set $V$ of variables, a function $F:X\rightarrow \wp(M)$ is \textbf{$V$-uniform} if, for all $s,s'\in X$, $s\sim_V s'$ implies $F(s)=F(s')$.   
\end{itemize}

We can now define the team semantics of $IF$ logic, which can be thought of as a ternary relation $M,X\models \varphi$ between a structure $M$, a team $X$ and an $IF$ formula $\varphi$ (``in $M$, $\varphi$ is satisfied by the team $X$''). We assume that the reader is familiar with the Tarskian notion of satisfaction (of a quantifier-free formula) by an assignment, symbolized here as $M,s\models \varphi$. We shall say that a team is \textbf{suitable} for an $IF$ formula $\psi$ provided that $dom(X)\supseteq FV(\psi)$. 



\begin{definition}
We say that a suitable team $X$ satisfies  an  $IF$ formula $\psi$ over a structure $M$, and we write $M,X\models \psi$, according to the following clauses:
\begin{itemize}
\item $M,X\models R(t_1,\dots,t_n)$ if $M,s\models R(t_1,\dots,t_n)$ in the Tarskian sense for every $s\in X$.
\item  $M,X\models \neg R(t_1,\dots,t_n)$ if $M,s\models \neg R(t_1,\dots,t_n)$ in the Tarskian sense for every $s\in X$.
\item $M,X\models t_1 = t_2$ if $M,s\models t_1 = t_2$ in the Tarskian sense for every $s\in X$.
\item $M,X\models t_1 \neq t_2$ if $M,s\models t_1 \neq t_2$ in the Tarskian sense for every $s\in X$.
\item $M,X\models \chi_1\land\chi_2$ if $M,X\models\chi_1$ and $M,X\models\chi_2$.  
\item $M,X\models \chi_1\lor\chi_2$ if there are $Y,Z\subseteq X$ such that $Y\cup Z=X$, $M,Y\models \chi_1$, and $M,Z\models \chi_2$.
\item $M,X\models (\forall v/V)\chi$ if $M,X[M/v]\models \chi$.
\item $M,X\models (\exists v/V)\chi$ if $M,X[F/v]\models \chi$ for some $V$-uniform function $F:X\rightarrow \wp(M)\setminus \{\emptyset\}$.\footnote{The clause given here for existential quantification is not the most commonly used in the literature on $IF$ logic. Typical presentations, such as \cite{ManSanSev2011}, include a so-called \emph{strict} condition, where the function $F$ is a function $X\rightarrow dom(M)$, picking elements rather than sets. \cite{Eng2012} introduced this \emph{lax} version of the clause for existential quantifiers, which we follow here, and which turns out to be more appropriate for the inclusion of generalized quantifiers. For basic $IF$ logic, the strict and lax conditions give rise to equivalent semantics. More generally, the strict and lax clause are interchangeable for any downward closed logic; downward closure is the property expressed in Theorem \ref{DWCR} below.}

\end{itemize}
\end{definition}

Then one can say that a sentence $\varphi$ is \textbf{true in $M$} ($M\models\varphi$) if $M,\{\emptyset\} \models \varphi$.
 Two $IF$ formulas $\psi,\chi$ are \textbf{truth-equivalent} if, for every structure $M$ and every team $X$ on $M$, $M,X\models\psi$ if and only if $M,X\models \chi$.

 It is to be remarked that, if we omit the requirement of $V$-uniformity from the existential case, the above clauses define a semantics for first-order ($FO$) logic, which turns out to be equivalent, at the level of sentences, with the usual Tarskian semantics.\footnote{For singleton teams, the equivalence extends to all $FO$ formulas, in the sense that $M,\{s\}\models \psi$ iff $M,s\models \psi$, for every structure $M$.}

Some of the arguments in the paper will be more easily stated in terms of Dependence-Friendly ($DF$) logic; this language has, instead of slashed quantifiers, backslashed ones; the quantifier $(\exists v\backslash V)$ is interpreted as ``there exists a $v$ dependent only on $V$''. The definition of sets of free variables is analogous to the case of $IF$ logic. The semantical clauses for backslashed quantifiers are:
\begin{itemize}
\item $M,X\models (\forall v\backslash V)\chi$ if $M,X[M/v]\models \chi$.
\item $M,X\models (\exists v\backslash V)\chi$ if $M,X[F/v]\models \chi$ for some $(dom(X) \setminus V)$-uniform function $F:X\rightarrow \wp(M)\setminus \{\emptyset\}$.
\end{itemize}
Via an appropriate embedding\footnote{The idea, which is somewhat more awkward than in the case of $IF$ logic, is to identify a backslashed quantifier $(Qv/V)$ with the first-order $Qv$ in case $(Qv/V)$ only occurs in the scope of quantifications of variables from $V$.}, also $DF$ logic can be seen as a conservative extension of $FO$. We are interested in this language because (differently from $IF$) it shares with $FO$ a property called \emph{locality}, which will be discussed at the end of the next section.

Finally, we must recall that, according to team semantics, a formula $\psi$ (of $IF$, $DF$, etc.) is said to be \textbf{flat} if it satisfies the following condition, for all suitable structures $M$ and teams $X$:
\[
M,X\models \psi \Leftrightarrow \text{(for all $s\in X$, $M,\{s\}\models \psi$)}.
\]
All first-order formulas are flat; so, flat formulas may be thought of as formulas that behave similarly to first-order ones. 


\section{Engstr\"om quantifiers} \label{SECENG}

We now present the generalized quantifiers for $IF$ logic that were introduced in \cite{Eng2012}. 
Given that Engström's definition draws  on earlier work by \cite{Mos1957} and \cite{Lin1966}, we shall start by shortly reviewing their definition. 
 Following Mostowski and Lindstr\"om we can call \emph{global quantifier of type $(n)$} any class $Q$ of structures of the form $(M,S)$, with $S$ a $n$-ary relation symbol. More general quantifiers (of type $(n_1,...,n_k)$) are considered in the literature, but we shall not concern us with them here.

Given a global quantifier $Q$ of type $(n)$, one can associate to each \emph{domain}\footnote{Remember our convention of writing $M$ either for structures or their domains, according to convenience.} $M$ its local quantifier $Q^M := \{S \ | \ (M,S)\in Q\}$; notice that this is a set of $k$-ary relations, i.e. an element of $\wp(\wp(M^n))$. 

Often some restrictions are imposed on the classes of structures which can be considered global quantifiers; the most important of these being \emph{closure under isomorphisms}. Two properties that will be of some importance in our paper are: 
\begin{itemize}
\item (Upwards) monotonicity: for all $M$, if $A\in Q^M$ and $A\subseteq B$, then $B\in Q^M$.
\item Union closure: for all $M$, if $\emptyset \neq \textgoth A \subseteq Q^M$, then $\bigcup \textgoth A \in Q^M$.
\end{itemize}
A monotone quantifier is also union closed, but the converse may fail; that is the case, for example, if $Q^M$ is any singleton different from $\{M\}$.

These generalized quantifiers can be ``added''  to first-order logic by allowing synctactical expressions of the form $Qv_1...v_n\psi$; in the style of Mostowski, these are interpreted by the semantical clause:

\begin{itemize}
\item $M,s\models Qv_1...v_n\psi \text{ if } \{(a_1,\dots,a_n) \ | \ M,s(a_1/v_1,\dots,a_n/v_n)\models \psi\}\in Q^M$. \footnote{More precisely, in this context we should write $Q^{dom(M)}$.}\end{itemize}

When $\mathcal Q$ is a set of generalized quantifiers, we will let $FO^{\mathcal Q}$ denote the extension of first-order logic with the quantifiers in $\mathcal Q$ (interpreted by the clause above). 

Notably, the type $(1)$ local quantifiers $\forall^M = \{M\}$ and $\exists^M = \wp(M)\setminus\{\emptyset\}$, combined with this semantical clause, provide alternative, equivalent definitions of the universal and existential quantifiers. 

We now return to Engström's notion of generalized quantifier. It combines the idea of a local quantifier described above with the clause for the existential quantifier in team semantics described in Definition 3.2. For quantifiers of type $(1)$, this amounts to the following clause: 


\begin{itemize} 
\item $M,X\models (Qv/V)\psi$ if there is a $V$-uniform function $F:X\rightarrow Q^M$ s. t. $M,X[F/v]\models \psi$.
\end{itemize}
We remark that the clause might be easily extended to type $(n)$ generalized quantifiers of the form $(Qv_1\dots v_n/V_1,\dots,V_n)$, but the type $(1)$ quantifiers will mostly suffice for the arguments of this paper.
Notice that by applying this clause to the local quantifiers $\forall^M$ and $\exists^M$ one obtains precisely the team semantics clauses for the universal and existential quantifiers.


Let $\mathcal Q$ be a set of quantifiers. We consider extensions $FO(\mathcal Q)$ and $IF(\mathcal Q)$  of, respectively, $FO$ and $IF$, in which the quantifiers $\mathcal Q$ are interpreted by Engstr\"om's semantical clause. 

\begin{example}
Consider the local quantifier $\exists_{\geq 3}^M := \{S\subseteq dom(M) \ | \ card(S) \geq 3\}$. Let us evaluate the sentence $\forall x \exists_{\geq 3} yP(x,y)$ on a structure $M$: $M\models \forall x \exists_{\geq 3} yP(x,y)$  iff $M,\{\emptyset\}\models \forall x \exists_{\geq 3} yP(x,y)$ iff $M, \{\emptyset\}[M/x]\models \exists_{\geq 3} yP(x,y)$ iff there is a function $F: \{\emptyset\}[M/x] \rightarrow \exists_{\geq 3}^M$ such that $M,\{ \emptyset\}[M/x][F/y]\models P(x,y)$ (we can also write $\{ \emptyset\}[MF/xy]$ for  $\{ \emptyset\}[M/x][F/y]$). This last condition amounts to saying, as in the case of the corresponding Mostowski quantifier\footnote{We will see that this correspondence fails if a quantifier like $\exists_{\geq 3}$ is applied to a more complex, non-flat formula.}, that for each possible value of $x$ there are at least $3$ values for $y$ which stand in the relation $P$ with this value of $x$. If we write $\forall x (\exists_{\geq 3} y/\{x\})P(x,y)$, we are adding the further requirement that we can pick the same $3$ values of $y$ for each value of $x$. Of course, the same can be achieved using the $FO(\exists_{\geq 3})$ sentence $\exists_{\geq 3}y\forall x  P(x,y)$.


But $IF(\mathcal Q)$ sentences cannot always be reduced to $FO(\mathcal Q)$ by means of quantifier swapping or other tricks; for example, in \cite{Sev2014} it is shown that there are sentences of the form $most x(most y/\{x\})\psi$ (where $most^M = \{S\subseteq M \ | \ card(S) \geq card(M)/2 \}$ for $M$ finite structure) which have no equivalent in $FO(most)$.  

A different kind of example is given by the quantifier $\exists_{= 3}^M := \{S\subseteq dom(M) \ | \ card(S) = 3\}$. Here it is immediately evident that the interpretation given by the clause of Engstr\"om is not the Mostowskian one (``there are exactly three''). For example, the sentence $\exists_{=3}xP(x)$ is true on a structure $M$ iff there are \emph{at least} three elements that satisfy $P$; indeed, the existence of an appropriate supplementing function which picks $n\geq 3$ elements entails  the existence of an appropriate function which picks exactly three elements. In this context, it seems to us to be more reasonable to read this quantifier as ``there is a 3-element set'', or, more generally, ``there is a supplementing function which picks $3$-element sets''.
\end{example}

 Contrarily to what we stated above for $IF$ logic, the restriction of the team semantics of $IF(\mathcal  Q)$ to its first-order fragment $FO(\mathcal Q)$ is not equivalent to Tarskian semantics plus the Mostowski-Lindstr\"om definition of the generalized quantifiers. This kind of conservativity may fail for the nonmonotone quantifiers; this claim will be substantiated in the next sections. It is probably this the reason why Engstr\"om required that his definition be applied only to monotone quantifiers. However, throughout the paper we will illustrate that many good properties of Engstr\"om's semantical clause hold as well if the definition is applied to non-monotone quantifiers (see the rest of the section, but also in particular the Appendix), and that there are other reasons that make the restriction to the monotone case not too well justified.

We see that the $IF(\mathcal Q)$ languages all satisfy two fundamental properties of $IF$ logic.

\begin{thm}[Downward closure] \label{DWCR}
Given a formula $\psi$ of $IF(\mathcal  Q)$ 
 such that $M,X\models\psi$, for all $Y\subseteq X$ it holds that $M,Y\models\psi$.
\end{thm}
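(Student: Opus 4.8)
The plan is to prove the statement by induction on the structure of the $IF(\mathcal Q)$ formula $\psi$. This is the standard strategy for downward closure results in team semantics, and I expect the bulk of the cases to be routine verification that each semantical clause propagates satisfaction downward to subteams. I would state the induction hypothesis as: for every $Y \subseteq X$, if $M,X \models \psi$ then $M,Y \models \psi$.

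\medskip

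\noindent\textbf{Base cases.} For the literals ($R(\vec t)$, $\neg R(\vec t)$, $t_1 = t_2$, $t_1 \neq t_2$), satisfaction by $X$ means satisfaction (in the Tarskian sense) by \emph{every} assignment $s \in X$. Since $Y \subseteq X$, every $s \in Y$ is also in $X$, so the condition holds for all $s \in Y$ as well. This is immediate from the literal clauses in Definition 3.2.

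\medskip

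\noindent\textbf{Inductive cases.} For conjunction $\chi_1 \land \chi_2$, if $M,X \models \chi_1$ and $M,X \models \chi_2$, the induction hypothesis applied to each conjunct gives $M,Y \models \chi_1$ and $M,Y \models \chi_2$, hence $M,Y \models \chi_1 \land \chi_2$. For disjunction $\chi_1 \lor \chi_2$, one has a split $X = X_1 \cup X_2$ with $M,X_1 \models \chi_1$ and $M,X_2 \models \chi_2$; I would take the induced split $Y = (X_1 \cap Y) \cup (X_2 \cap Y)$, note $X_i \cap Y \subseteq X_i$, and apply the induction hypothesis to each side. For the universal quantifier $(\forall v/V)\chi$, satisfaction means $M,X[M/v] \models \chi$; here I would verify that $Y \subseteq X$ implies $Y[M/v] \subseteq X[M/v]$ (the duplicated team of a subteam is a subteam of the duplicated team), and then apply the induction hypothesis to $\chi$ at the smaller team $Y[M/v]$.

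\medskip

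\noindent\textbf{The quantifier cases} are where the only genuine subtlety lies, and I expect the existential/Engstr\"om-quantifier step to be the main obstacle, since it involves the choice of a supplementing function. Suppose $M,X \models (Qv/V)\chi$ via a $V$-uniform $F : X \to Q^M$ with $M,X[F/v] \models \chi$. For $Y \subseteq X$, I would use the restriction $F_{\upharpoonright Y}$: this is still $V$-uniform (uniformity is inherited by restriction to a subteam) and still takes values in $Q^M$, and the key containment $Y[F_{\upharpoonright Y}/v] \subseteq X[F/v]$ holds because each $s \in Y$ is in $X$ and $F_{\upharpoonright Y}(s) = F(s)$. Applying the induction hypothesis to $\chi$ at $Y[F_{\upharpoonright Y}/v] \subseteq X[F/v]$ then yields $M, Y[F_{\upharpoonright Y}/v] \models \chi$, so $M,Y \models (Qv/V)\chi$. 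I should emphasize that this argument makes no use whatsoever of monotonicity of $Q$: we reuse the \emph{same} sets $F(s) \in Q^M$ for the assignments surviving in $Y$, so the constraint ``$F(s) \in Q^M$'' is preserved automatically rather than requiring any upward or downward property of $Q^M$. This observation is exactly the point the paper wishes to stress, namely that downward closure holds for arbitrary (not merely monotone) Engstr\"om quantifiers. The backslashed case is identical, replacing $V$-uniformity with $(dom(X) \setminus V)$-uniformity, which is likewise inherited by restriction to $Y$.
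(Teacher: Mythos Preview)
Your proof is correct and follows essentially the same approach as the paper: induction on the syntax, with the key Engstr\"om-quantifier case handled by restricting the supplementing function $F$ to the subteam $Y$, observing that this restriction is still $V$-uniform with values in $Q^M$, and that $Y[F_{\upharpoonright Y}/v]\subseteq X[F/v]$ so the induction hypothesis applies. The paper merely cites \cite{Vaa2007} for the standard $IF$ cases you spell out, and phrases the subteam in the quantifier case as $Z=\{s\in X[F/v]\mid s_{-v}\in Y\}$ before identifying it with $Y[G/v]$, but this is the same argument.
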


\begin{proof}
We give this proof for completeness, since the analogous result of \cite{Eng2012} is stated to hold only under the assumption of monotonicity of the quantifiers.  
We prove the statement by induction on the structure of $\varphi$. For $IF$ operators, see the analogous proof for Dependence logic in \cite{Vaa2007}, 5.1. The only new case is $\varphi = (Qv/V)\psi$, for $Q\in \mathcal Q$. Suppose $M,X\models (Qv/V)\psi$. Then there is a $V$-uniform function $F:X\rightarrow Q^M$ such that $M,X[F/v]\models \psi$. Let $Z = \{s\in X[F/v] \ | \ s_{-v}\in Y\}$.  
By induction hypothesis, $M,Z\models \psi$. Let $G$ be the restriction of $F$ to $Y$; clearly it is a $V$-uniform function $Y\rightarrow Q^M$. Then $Z = Y[G/v]$. So, $M,Y\models (Qv/V)\psi$.  All steps are correct also in the limit case that $X[F/v] =\emptyset$.
\end{proof}

\begin{thm}[Empty team property] \label{EMPTYSETR}
Every formula $\psi$ of $IF(\mathcal  Q)$ 
 is satisfied by the empty team, on any structure ($M,\emptyset\models\psi$).
\end{thm}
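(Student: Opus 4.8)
The plan is to prove the Empty Team Property by straightforward induction on the structure of $\psi$, exactly mirroring the structure of the downward-closure proof that immediately precedes it. The key observation is that satisfaction by the empty team should hold vacuously at nearly every clause, since all the semantic clauses quantify over elements or assignments of the team, and these quantifications become trivially satisfiable when there are no assignments to constrain.

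First I would handle the atomic cases. For an atomic formula $R(t_1,\dots,t_n)$ (and its negation, and equalities), the clause requires $M,s\models R(t_1,\dots,t_n)$ for \emph{every} $s\in X$; when $X=\emptyset$ this is a universally quantified statement over an empty domain, hence vacuously true. Next, for conjunction $\chi_1\land\chi_2$, the induction hypothesis gives $M,\emptyset\models\chi_1$ and $M,\emptyset\models\chi_2$, so the conjunction holds. For disjunction $\chi_1\lor\chi_2$, I would choose the witnessing subteams $Y=Z=\emptyset$; since $\emptyset\cup\emptyset=\emptyset$ and both disjuncts are satisfied by $\emptyset$ by induction hypothesis, the clause is met.

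For the quantifier cases the crucial point is that applying any team operation to the empty team yields the empty team. For $(\forall v/V)\chi$ we need $M,\emptyset[M/v]\models\chi$; but $\emptyset[M/v]=\emptyset$ by definition of the duplicated team, so the induction hypothesis closes this case. For the existential and Engstr\"om cases $(\exists v/V)\chi$ and $(Qv/V)\chi$, I would exhibit the (necessarily $V$-uniform) empty function $F:\emptyset\rightarrow Q^M$ (vacuously $V$-uniform, since the defining condition ranges over pairs $s,s'\in\emptyset$); then $\emptyset[F/v]=\emptyset$, and by induction hypothesis $M,\emptyset\models\chi$, so the clause is satisfied. One minor technical point worth addressing is that the $IF$ existential clause demands a function into $\wp(M)\setminus\{\emptyset\}$, but this restriction is irrelevant when the domain of $F$ is empty, since there are no values to constrain.

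The main obstacle, if any, is essentially a bookkeeping one: ensuring that the empty function counts as $V$-uniform and as a function of the required codomain type, and that each team operation genuinely collapses the empty team to itself. These are all immediate from the definitions in section \ref{SEM}, so I expect no real difficulty; the proof is entirely routine once one recognizes the uniform vacuity pattern. Notably, as with downward closure, monotonicity of the quantifiers plays no role whatsoever, which fits the paper's recurring thesis that the monotonicity restriction is not essential to the good behaviour of Engstr\"om's clause.
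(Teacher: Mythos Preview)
Your proposal is correct and is exactly the approach the paper takes: the paper's entire proof is the single line ``By induction on the syntax of $\psi$,'' and you have simply spelled out the (routine) cases of that induction. Your handling of the quantifier cases via the empty function and the observation that team operations collapse $\emptyset$ to $\emptyset$ is precisely what is intended.
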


\begin{proof}
By induction on the syntax of $\psi$.
\end{proof}

It is straightforward to show that also the languages $DF(\mathcal Q)$ have the downward closure and the empty team property. But they have an additional property, locality (i.e. context independence), which is described in the next theorem.

\begin{thm}[Locality]  \label{TEOLOCDF}
Let $\varphi$ be a $DF(\mathcal Q)$ formula, and $X$ a team suitable for $\varphi$. Then, for all structures $M$,
\[
M,X\models\varphi \iff M,X_{\upharpoonright FV(\varphi)}\models \varphi
\]
\end{thm}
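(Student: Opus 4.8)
The plan is to prove the locality theorem by structural induction on the $DF(\mathcal{Q})$ formula $\varphi$. The key observation that makes locality work for $DF$ but fail for $IF$ is that backslashed quantifiers $(\exists v \backslash V)$ require functions that are $(dom(X)\setminus V)$-uniform, meaning they can only depend on the variables in $V$. Since $V \subseteq FV(\varphi)$ always holds (because slash-set variables are counted as free), the witnessing functions depend only on variables that survive the restriction to $FV(\varphi)$. This is precisely what allows the restriction and the full team to be treated interchangeably.

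First I would handle the atomic and literal cases, where satisfaction is flat (defined pointwise over assignments): since the truth of an atom $R(t_1,\dots,t_n)$ at an assignment $s$ depends only on the values $s$ assigns to the variables occurring in the atom, and all such variables lie in $FV(\varphi)$, restricting each $s$ to $FV(\varphi)$ changes nothing. The conjunction case is immediate from the induction hypothesis since $FV(\chi_1 \land \chi_2) = FV(\chi_1) \cup FV(\chi_2)$, though one must be careful that applying the induction hypothesis to each conjunct uses its own (possibly smaller) free-variable set; the cleanest formulation proves the statement for restriction to \emph{any} set $W$ with $FV(\varphi) \subseteq W \subseteq dom(X)$, which makes the inductive bookkeeping go through smoothly. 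The disjunction case requires showing that a splitting $X = Y \cup Z$ on the full team corresponds to a splitting of the restricted team and vice versa; here one pushes the splits through the restriction map $s \mapsto s_{\upharpoonright W}$, using downward closure (Theorem \ref{DWCR}) to repair the splits on the restricted side.

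The quantifier cases are the crux. For the universal quantifier $(\forall v \backslash V)\chi$, one relates $X[M/v]$ restricted to $FV(\varphi) \cup \{v\}$ with $(X_{\upharpoonright FV(\varphi)})[M/v]$, which agree because duplication commutes with restriction. For the existential case $(\exists v \backslash V)\chi$ I would argue that a $(dom(X)\setminus V)$-uniform witnessing function $F$ on $X$ descends to a well-defined function on $X_{\upharpoonright FV(\varphi)}$ precisely because $F$ depends only on $V \subseteq FV(\varphi)$, so $F$ factors through the restriction map; conversely any witness on the restricted team lifts back. One then checks that the supplemented teams $X[F/v]$ and $(X_{\upharpoonright FV(\varphi)})[F/v]$ have matching restrictions to $FV(\chi)$, allowing the induction hypothesis to transfer satisfaction of $\chi$ across.

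The main obstacle I anticipate is the well-definedness and factorization argument in the existential quantifier case: one must verify carefully that distinct assignments $s, s' \in X$ with $s_{\upharpoonright FV(\varphi)} = s'_{\upharpoonright FV(\varphi)}$ satisfy $s \sim_{dom(X)\setminus V} s'$ (since they agree on $FV(\varphi) \supseteq V$), and hence that $F(s) = F(s')$ by uniformity, so that $F$ genuinely induces a function on the quotient $X_{\upharpoonright FV(\varphi)}$. Handling the empty-team and empty-duplication limit cases (as in the proof of Theorem \ref{DWCR}) also needs a brief mention. This uniformity-factorization step is exactly where the proof would break down for $IF$ logic, whose slashed quantifiers demand $V$-uniformity rather than $(dom(X)\setminus V)$-uniformity, and thus may depend on variables outside $FV(\varphi)$.
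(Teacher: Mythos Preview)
Your proposal is correct and follows essentially the same inductive strategy as the paper, including the crucial strengthening to restriction by an arbitrary superset $U \supseteq FV(\varphi)$. The one minor technical difference lies in the quantifier case: the paper defines the descended function $F^*$ by an arbitrary choice of preimage and then appeals to downward closure (obtaining only the inclusion $X_{\upharpoonright U}[F^*/v] \subseteq X[F/v]_{\upharpoonright W}$), whereas your factorization argument---observing that $(dom(X)\setminus V)$-uniformity forces $F$ to be constant on fibers of the restriction map, since $V\subseteq U$---yields equality of the supplemented teams directly and makes downward closure unnecessary there; conversely, your invocation of downward closure in the disjunction case is harmless but superfluous, since splits transfer exactly along the restriction map by pullback and pushforward.
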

\begin{proof}
%
The proof is by induction on the syntax of $\varphi$; it will be easier to prove the seemingly stronger claim that, for each $U\supseteq FV(\varphi)$, 
\[
M,X\models\varphi \iff M,X_{\upharpoonright U}\models\varphi.
\]
  The cases for (possibly negated) atoms, $\land$ and $\lor$ are adequately treated in \cite{Vaa2007}, proof of Lemma 3.27. We consider the quantifier case, $\varphi = (Qv\backslash V)\psi$. \footnote{This case takes care also of the existential quantifier, thanks to our initial choice of the lax semantics.} Let $W := U\cup \{v\}$.

$\Rightarrow$) Assume $M,X\models (Qv\backslash V)\psi$. Then there is a $(dom(X)\setminus V)$-uniform function $F:X\rightarrow Q^M$ such that $M,X[F/v]\models \psi$. By the inductive hypothesis, $M,X[F/v]_{\upharpoonright W}\models \psi$. Now define $F^*:X_{\upharpoonright U}\rightarrow Q^M$ as follows. For each $s\in X_{\upharpoonright U}$, pick an $s'\in X$ such that $s=s'_{\upharpoonright U}$. Then let $F^*(s):= F(s')$; clearly $F^*$ is $(dom(X)\setminus V)$-uniform. Since $dom(X_{\upharpoonright U})\setminus V\subseteq dom(X)\setminus V$, $F^*$ is also $(dom(X_{\upharpoonright U})\setminus V)$-uniform. It can be checked that $X_{\upharpoonright U}[F^*/v] \subseteq X[F/v]_{\upharpoonright W}$. So, by downward closure, $M,X_{\upharpoonright U}[F^*/v]\models \psi$. Therefore $M,X_{\upharpoonright U} \models (Qv\backslash V)\psi$.

$\Leftarrow$) Assume $M,X_{\upharpoonright U}\models (Qv\backslash V)\psi$. Then there is a $(dom(X_{\upharpoonright U})\setminus V)$-uniform function $G: X_{\upharpoonright U}\rightarrow Q^M$ such that $M,X_{\upharpoonright U}[G/v]\models \psi$. Define $G^*:X\rightarrow Q^M$ by the clause $G^*(s):= G(s_{\upharpoonright U})$; $G^*$ is $(dom(X_{\upharpoonright U})\setminus V)$-uniform. It can then be checked that $X[G^*/v]_{\upharpoonright W} = X_{\upharpoonright U}[G/v]$; so $M, X[G^*/v]_{\upharpoonright W}\models \psi$. By the inductive hypothesis, $M, X[G^*/v]\models \psi$. So $M,X\models (Qv\backslash V)\psi$.  
\end{proof}

In \cite{EngKon2013} the same result is stated (without proof) for Dependence logic; but the authors restrict its validity to extensions by monotone quantifiers. The proof above shows that this restriction is unnecessary. Notice however that downward closure was needed in the proof. Suppose we try to extend with Engstr\"om quantifiers some language which is local, but not downward closed\footnote{For example, languages which contain  the independence atoms of \cite{GraVaa2013}, or the inclusion atoms of \cite{Gal2013}.}; we might then need some restrictive assumptions on the additional quantifiers. A different argument for proving locality, which was given in \cite{Gal2013} Theorem 4.22, shows that one such sufficient condition is requiring \emph{union closure} of the quantifiers; i.e., a restriction which is weaker than monotonicity.

\section{Generalized team quantifiers} \label{SECSECOND}

We want to argue that the methodology for generalized quantifier extensions proposed by Engstr\"om is not the most adequate for $IF$,  $DF$ and similar logics, because it does not take into account the second-order nature of the semantics of these logics. One way of supporting this claim is making the following rough observation: on one side, the ``meaning'' (in an extensional sense) of a first-order formula $\psi$ of $n$ free variables can be identified with the set of all assignments of domain $dom(s)$ which satisfy $\psi$.\footnote{In theory, we should consider assignments $s$ such that $dom(s)\supseteq FV(\psi)$, since also these kinds of assignments can satisfy the formula; but thanks to the locality property of $FO$, the assignments with domain $FV(\psi)$ encode all the significant information.}  Once an alphabetical order of the free variables of $\psi$ is fixed, say $\vec v = v_1\dots v_n$, each such assignment can be identified with a corresponding tuple $s(\vec v) = (s(v_1),\dots,s(v_n))$. So, the ``meaning'' of a $FO$ formula of $n$ free variables is a set of $n$-tuples, i.e. an \emph{$n$-ary relation}. On the other hand, formulas of team based logics are satisfied by teams. For simplicity, let us focus on a local logic, such as $DF$. The ``meaning'' of a $DF$ formula $\psi$ with $n$ free variables is the set of all the teams of domain $FV(\psi)$. Assuming again that the variables of $\psi$ are enumerated as $\vec v$, each team $X$ in the ``meaning'' of $\psi$ can be identified with the relation $X(\vec v):= \{(s(v_1),\dots,s(v_n)) \ | \ s\in X\}$. So, the ``meaning'' of a $DF$ formula is a \emph{set} of $n$-ary relations. 
 If variables in team logics refer to sets and relations, it seems then unreasonable that quantifiers of team logics be not shifted to a higher order. The typical first-order quantifiers assert that a formula holds for a \emph{quantity of elements}. We should expect, then, that the typical generalized quantifiers for team logics state that a formula holds for a certain \emph{quantity of sets}; and perhaps, as a second dimension, it could express the \emph{size of the sets}. Thus, the interpretation $Q^M$ of a quantifier $Q$ in a structure $M$ should not be a subset of $\wp(M)$, but a subset of $\wp(\wp(M))$.
In an extension of $DF$ or $IF$ logic, it would seem legitimate to say things like ``$\psi$ holds for most six-element sets'', ``there is a set of size smaller than three for which $\psi$ holds'', ``$\psi$ holds on all cofinite sets'', and so on. Quantifiers of this kind appear in the literature under the name of \emph{second-order generalized quantifiers}.\footnote{Second-order generalized quantifiers were introduced in two slightly different forms in \cite{BurVol1998} and \cite{And2002}, respectively; significant results on their definability theory are found e.g. in \cite{Kon2010}.}

Actually, in the context of team semantics, this is still an oversimplification; the quantifiers we are after will behave as second-order generalized quantifier only when they occur sentence-initially. We will be guided to a reasonable definition by analogy with the first-order case.
 As we have seen, a local quantifier $Q^M$ in the sense of Mostowski is a second-order object -- a set of $n$-ary relations, if its type is $(n)$. In order to assess whether a $FO^Q$ formula $\varphi = Qv_1\dots v_n\psi$ is satisfied on $M$ by a given assignment $s$ (of domain $dom(s)\supseteq FV(\varphi)$), one needs to take into consideration an object that we may call the \emph{meaning of $\varphi$ relative to $M,s$ and $\vec v$} ($=v_1,\dots,v_n$): $[\varphi]_{M,s}^{\vec{v}}:= \{(a_1,\dots,a_n) \ | \ M,s(a_1/v_1,\dots,a_n/v_n)\models \psi\}$. If $[\varphi]^{\vec v}_{M,s}\in Q^M$, then $\varphi$ is satisfied by $s$ in $M$. This ``meaning'' encodes all the ways in which $s$ can be extended so as to satisfy $\psi$. Notice that $[\varphi]^{\vec v}_{M,s}$ can be an element of $Q^M$ because it is a \emph{first-order} object, i.e., an ($n$-ary) relation.


With this in mind, it seems reasonable to introduce a notion of \emph{meaning of $\varphi$ relative to $M,X$ and $\vec v$}, where $X$ is a \emph{team} such that $dom(X)\supseteq FV(\varphi)$. 
In analogy with the first-order case, such a ``meaning'' should encode all the possible ways in which the team $X$ can be extended over variable $v$ so as to satisfy formula $\psi$. 
But it seems hard to identify a natural notion of ``extending a team by means of a set''; this is the reason why our local quantifiers will not be the usual second-order quantifiers. Instead, it is reasonable to extend a team $X$ by extending \emph{each assignment of the team} with a set (supplementing). This amounts to picking a function $F:X\rightarrow \wp(dom(M))$. Thus, the ``meaning'' of $\psi$ relative $M,X$ and $v$ will be a set of supplementing functions:
\begin{equation} \label{GREATDEF}
[\varphi]^{v}_{M,X}=\{ F: X\rightarrow \wp(M) \ \ | \ \ M,X[F/v] \models \psi \}.
\end{equation}
This object is an element of $\wp(\wp(M)^X)\subset\wp(\wp(X\times\wp(M)))$, i.e., if we ignore the contribution made by the team, a third-order object. However, the role it will play is mostly affine to a second-order notion of meaning; for this reason, we abuse our notation and use gothic variables such as \textgoth{F} for these kinds of objects.
In the case of $IF$ logic, we will also need to take care of the fact that each quantifier can also have a slash set $V$; the correct semantical object to consider in this case seems then to be:
\begin{equation} \label{GREATDEF2}
[\varphi]^{v,V}_{M,X}= \{ F: X\rightarrow \wp(M) \ \ | \ \ F \text{ is } V\text{-uniform and } M,X[F/v] \models \psi \}.
\end{equation}
``Meanings'' for $DF$ logic formulas can be devised analogously. 

Our \emph{local team quantifiers} will then have to be sets of such meanings; that is, elements of $\wp(\wp(\wp(M)^X))$. Inoring the role of $X$, these are fourth-order objects, but, again, the part they play is analogous to that of local second-order quantifiers (which are \emph{third} order objects). So, abusing again notation, we will denote them as $\hat Q^{M,X}$. Notice that we have a second parameter for the localization, i.e. a team $X$; this is not a strictly necessary step, but it will help us work, in the following, with simpler notations. A (\emph{global}) team quantifier $\hat Q$ might be thought of as a class of local quantifiers indexed by a pair $(M,X)$ ranging over structure-team pairs (with the obvious restriction that $X$ be a team \emph{on $M$}).

Given a set $\hat{\mathcal Q}$ of team quantifiers, we denote by $IF[\hat{\mathcal Q}]$  the language $IF$ extended with the syntactical clause that states that $(\hat Qv/V)\psi$ is an $IF[\hat{\mathcal Q}]$ formula, provided that $\hat Q\in \hat{\mathcal Q}$, $\psi$ is an $IF[\hat{\mathcal Q}]$ formula, $v$ is a variable, and $V$ a finite set of variables. The corresponding semantical clause, for each $\hat Q\in \hat{\mathcal Q}$, is:
\begin{itemize}
\item $M,X\models (\hat Qv/V)\psi$ if $[\psi]^{v,V}_{M,X} \in \hat Q^{M,X}$. 
\end{itemize}

The treatment of extensions of $DF$ logic is similar. Extending the definition to $n$-ary quantifiers is only a matter of greater notational complexity; unary quantifiers will suffice for our arguments in the paper.

Notice that, for what regards the special case in which a team quantifier $\hat Qv$ occurs sentence-initially (say, in front of a formula $\psi$ s.t. $FV(\psi) = \{v\}$), there is a bijection between the set $\wp(M)^{\{\emptyset\}}$ of functions $F:\{\emptyset\} \rightarrow \wp(M)$ and the set of sets of the form $F(\emptyset)$ (for some $F\in \wp(M)^{\{\emptyset\}}$). Therefore, the ``meaning'' that is used to check the truth of  $\hat Qv\psi$ over a structure $M$, that is, the set $[\psi]^{v}_{M,\{\emptyset\}}=\{ F: \{\emptyset\}\rightarrow \wp(M) \ \ | \ \ M,X[F/v] \models \psi \}$, can be identified, through the bijection, with a second-order object, a subset of $\wp(M)$:
\[
   |\psi|^{v}_{M} := \{F(\emptyset) \ | \ F: \{\emptyset\}\rightarrow \wp(M) \text{ and }   M,X[F/v]\models \varphi\}.
\]

 Thus, when evaluating sentence-initial quantifiers, our earlier intuitive statement that the meaning of the subformula $\psi$ is a family of subsets of $dom(M)$ is somewhat justified. Similarly, in the sentence-initial case the local quantifier $\hat Q^{M,\{\emptyset\}}$ can be identified with a third-order object (an element of $\wp(\wp(\wp(M)))$):
\[
|\hat Q|^M := \{ \{F(\emptyset) \ | \ F\in\textgoth{F}\} \ | \ \textgoth{F}\in \hat Q^{M,\{\emptyset\}}\};
\]

\noindent this is the reason for our choice of sticking to the third-order notation $\hat Q$. It is then straightforward to see that the semantical clause can be restated, for sentence-initial quantifiers and the singleton team $\{\emptyset\}$, as 
\begin{itemize}
\item $M,\{\emptyset\}\models \hat Qv\psi$ if $|\psi|^{v}_{M} \in |\hat Q|^M$. 
\end{itemize}

We illustrate the notion of team quantifier (and compare it to previous approaches) with some examples. But first we wish to point out that also for team quantifiers there is a notion of (upwards) monotonicity. A quantifier $\hat Q$ is \emph{monotone (in the team-theoretical sense)} if, for every domain $M$ and every $\textgoth{F},\textgoth{F}'\in \wp(\wp(\wp(M)^X))$ (sets of supplementing functions),  $\textgoth{F}\in\hat Q^M$ plus $\textgoth{F}\subseteq \textgoth{F}'$ implies $\textgoth{F}'\in\hat Q^M$. 

\begin{example}
Consider the following local team quantifiers:
\[
\hat \exists^{M,X} := \{\textgoth{F}\in \wp(\wp(M)^X) \ | \  \exists F\in \textgoth{F}\cap (\wp(M)\setminus \{\emptyset\})^X \}
\]
\[
\hat \forall^{M,X} := \{\textgoth{F}\in \wp(\wp(M)^X) \ | \  \exists F\in \textgoth{F}\forall s\in X(F(s)=dom(M))\}
\]
It is straightforward  to see that these are nothing else than the usual quantifiers $\exists,\forall$ of team semantics, in the sense that, for any formula $\psi$ of any language considered in this paper, any structure $M$ and any team $X$, and any variable $v$,
\[
M,X\models \hat \exists v  \psi \iff M,X\models \exists v\psi
\]
\[
M,X\models \hat \forall v \psi \iff M,X\models \forall v\psi
\]
(similar statements hold for slashed and backslashed versions of the quantifiers).

However, notice that  $\hat\exists$ and $\hat\forall$ are both upwards monotone in the team-theoretical sense. We might say that, from a higher-order perspective, they are both existential quantifiers (stating the existence of a certain function).
\end{example}

\begin{example} \label{EXTHREE}
Let us consider the quantifier ``there are exactly three''. There are at least four distinct ways to represent this kind of quantification by means of a team quantifier. The first of these is described by the local quantifier
\[
\hat\exists_{=3}^{M,X} := \{ \textgoth{F}\in \wp(\wp(M)^X) \ | \ \exists F\in \textgoth{F}\forall s\in X (card(F(s)) = 3)\}.  
\]
If the ``meaning'' of a formula $\psi$ is in $\hat\exists_{=3}^{M,X}$, that means that ``there is at least one function $F$ which picks 3-elements sets'', such that $M, X[F/v]\models \psi$. This is equivalent to the Engstr\"om quantifier $\exists_{=3}$; but it should be clear that, in the context of team quantifiers, this is a perfectly reasonable quantifier, contrarily to what was claimed by Engstr\"om (we will return more systematically on this point in section \ref{REA}). Notice that, in the team-theoretical sense, this is an (upwards) monotone quantifier. 

The second possible interpretation of ``there are exactly three'' is the following:
\[
\hat\exists_{=3,nm}^{M,X} := \{ \textgoth{F}\in \wp(\wp(M)^X) \ | \ \textgoth{F}\neq\emptyset \land \forall F\in \textgoth{F}\forall s\in X(card(F(s)) = 3)\}.  
\]
Now $M,X\models\hat\exists_{=3}\psi$ states that 1) there is at least one function $F$ picking $3$-element sets, such that $M,X[F/v]\models\psi$; and 2), that any supplementing function $F$ which satisfies the condition $M,X[F/v]\models\psi$ is a function which picks $3$-element sets. We will see in section \ref{SECNONMON} how this quantifier relates to Engstr\"om's \emph{second} definition of generalized quantifier, which is instead instantiated by the following quantifier:
\[
\hat\exists_{=3,b}^{M,X} := \{ \textgoth{F}\in \wp(\wp(M)^X) \ | \ 
 \exists F\in \textgoth{F}[\forall s\in X (card(F(s)) = 3) \land 
\]
\[ 
 \land\forall F'\geq F\forall s\in X(card(F(s)) = 3)]\}.  
\]
where $F'\geq F$ means that $F'(s)\supseteq F(s)$ for all $s\in X$.

The fourth interpretation of ``there are exactly three'' is the following:
\[
\hat\exists_{=\hat 3}^{M,X} := \{ \textgoth{F}\in \wp(\wp(M)^X) \ | \ card(\textgoth{F}) = 3 \text{ and } \forall F\in \textgoth{F}(F\neq \emptyset) \}.
\]
$M,X\models \hat\exists_{=\hat 3}\psi$ states the existence of exactly three supplementing functions $F_1,F_2,F_3$ such that, for $i=1..3$, $M,X[F_i/v]\models\psi$. The requirement of nonemptiness seems to be necessary in order to avoid triviality, since the most typical logics based on team semantics all have the empty team property. This third definition is close in spirit to the Most quantifier that was introduced in \cite{DurEbbKonVol2011}, and which we review in the next example.  
\end{example}

\begin{example}
There has been already at least one proposal of a properly higher-order, team-theoretical quantifier. \cite{DurEbbKonVol2011} studied a ``Most'' quantifier defined by the following semantical clause (over \emph{finite} structures):
\[
M,X\models \operatorname{Most} v\psi \Leftrightarrow 
\]
\[
\Leftrightarrow \text{ there are at least $card(M^X)/2$ functions } X\rightarrow M \text{ s.t. } M,X[F/v]\models \psi.
\] 
This can be straightforwardly expressed as the semantics of a team quantifier. Let $\wp^1(M)$ denote the set of singleton subsets of $dom(M)$. Then $M,X\models \operatorname{Most} v\psi$ if and only if $[\psi]^{v}_{M,X} \in Most^{M,X}$, where 
\[
Most^{M,X}:=\{\textgoth{F}\in \wp(\wp(M)^X) \ | \ card(\textgoth{F}\cap \wp^1(M)^X)\geq card(M^X)/2)\}.
\]
(Notice the slight difference: the functions in the definition of \cite{DurEbbKonVol2011} pick elements of $dom(M)$, while the functions mentioned in the definition of $Most^{M,X}$ pick singleton subsets of $dom(M)$.)
 
\cite{DurEbbKonVol2011} prove that the extension of $FO$ with this Most quantifier is not local, in the sense of Theorem \ref{TEOLOCDF}\footnote{The argument for nonlocality is only found in the online archived version of \cite{DurEbbKonVol2011},   arXiv:1109.4750v6. Notice also that in the journal version of the paper, \cite{DurEbbKonVol2015}, the authors give a different definition of Most, which gives rise to local logics.} This shows that, when adding team quantifiers (as defined in this paper) to a local logic, one does not necessarily obtain a local logic.
\end{example}

 We show an example of a team-theoretically nonmonotone quantifier which seems to be captured correctly by our definition.

\begin{example}
We consider the quantifier ``there are finitely many functions which pick countable sets...'', defined as 
\[
\hat\exists_{<\omega,=\omega}^M := \{\textgoth{F}\in \wp(\wp(M)^X) \ | 
card(\textgoth{F}\cap \wp^\omega(M)^X) <\omega \}
\]
where $\wp^\omega(M)$ denotes the set of countable subsets of $dom(M)$.
Consider the usual structure $(\mathbb{N},+,\times)$ of natural numbers, and let $\textgoth P:= \{S\subseteq \mathbb{N} | \forall n\in S(n \text{ is prime})\}$ be the set of all sets of prime numbers. $\textgoth{P}$ is downward closed, in the sense that,  if $P\in\textgoth{P}$ and $P'\subseteq P$, then $P'\in\textgoth{P}$ (this holds because, since $P'\subseteq P\in\textgoth{P}$, $P'$ is also a set of prime numbers). It is known, by Theorem 4.9 of \cite{KonVaa2009}, that each downward closed set of subsets of the domain of a fixed structure is definable by some $IF$ formula of one free variable. Therefore, there is an $IF$ formula $\psi$ (of one free variable $v$) which, on the structure $(\mathbb{N},+,\times)$ for Peano arithmetic, is satisfied exactly by all sets of prime numbers (it defines $\textgoth{P}$). (More rigorously: it is satisfied by all teams $X$ of domain $\{v\}$ such that, for all $s\in X$, $s(v)$ is a prime number). Then, $\psi$ is satisfied by infinitely many (countable) sets of prime numbers: we should expect that $\mathbb{N}\not\models\hat\exists_{<\omega,=\omega}x\psi$. Let us show that our semantic clause does indeed give the expected answer. Observe that here $[\psi]^v_{M,\{\emptyset\}} = \{F:\{\emptyset\}\rightarrow \wp(M) \ | \ M, \{\emptyset\}[F/v] \models \psi\} = \{F:\{\emptyset\}\rightarrow \wp(M) \ | \ \forall a\in F(\emptyset)( a \text{ is prime})\} = \{F:\{\emptyset\}\rightarrow \wp(M) \ | \ F(\emptyset)\in \textgoth{P}\}$. Therefore,  $card(\{F\in [\psi]^v_{M,\{\emptyset\}} \ | \ card(F(\emptyset)) = \omega \}) = card([\psi]^v_{M,\{\emptyset\}}) = card(\textgoth{P}) = 2^{\aleph_0}$, while all $\textgoth{F}$ in $\hat\exists_{<\omega,=\omega}^M$ are such that $\textgoth{F}\cap \wp^\omega(M)^X$ is finite. So, $[\psi]^v_{M,\{\emptyset\}}\notin \hat\exists_{<\omega,=\omega}^M$, which means $\mathbb{N}\not\models\hat\exists_{<\omega,=\omega}v\psi$, as expected. \\
\end{example}

It should be obvious at this point that team quantifiers are an extremely rich family of objects. Perhaps it might be welcome to find some restrictions in order to tame this wild multiplicity. First of all, in order to avoid a number of exceptions, \emph{for the rest of the paper we assume that, if $\textgoth F \in Q^{M,X}$, then $\textgoth F$ does not contain the empty function}.

Secondly, one might want to consider only team quantifiers that are, in some sense, logical. \emph{Invariance by permutations} and the stricter \emph{invariance by isomorphisms} are considered, by many authors, to be necessary conditions for logicality of Mostowski quantifiers. \cite{PetWes2006}, sec. 9.1.1,  describe a recipe for lifting these conditions to objects of higher-order. The key point is that any bijection $g$ between domains $M,M'$ can be lifted to a bijection of higher-order objects. In our case, we first need to lift it to a bijection $g'$ between teams of the same variable domain. Given a team $X$ of domain $V$, we define $g'(X)$ as the team $\{ f \circ s \ | \ s \in X \}$ (where $\circ$ is composition of functions). It is then straightforward to use the recipe of \cite{PetWes2006} to lift $g$ to a bijection $g''$ between sets of supplementing functions (i.e. elements of $\wp(\wp(M)^X)$, for various $X$). Then, invariance by permutations states that, for all $M$, $X$  as above, all permutations $g: M\rightarrow M$, and all $\textgoth F\in \wp(\wp(M)^X)$,
\[
\textgoth F \in Q^{M,X} \iff g''(\textgoth F) \in Q^{M,X}.
\]
Invariance by isomorphisms will be the stricter requirement that, for all bijections $g:M\rightarrow M'$, and all   $X,\textgoth F$ as above,
\[
\textgoth F \in Q^{M,X} \iff g''(\textgoth F) \in Q^{g(M),g'(X)}.
\]
Famously \citep[sec. 3.3.2]{PetWes2006},  for type $(1)$ Mostowski quantifiers, isomorphism invariance is equivalent to a condition on cardinalities of the sets that belong to the (local) quantifiers and their complements. The analogous condition for team quantifiers reads as follows: for all structures $M$ and teams $X$ on $M$,
\begin{quote}
 If $\textgoth{F}\in \hat Q^{M,X}$, $\textgoth{F}'\in \wp(\wp(M)^X)$, $card(\wp(M)^X\setminus \textgoth F) = card(\wp(M)^X\setminus \textgoth F')$ and $card(\textgoth{F}) = card(\textgoth{F}')$, then $\textgoth{F}'\in \hat Q^{M,X}$;
\end{quote}
\noindent that is, if two classes $\textgoth{F}, \textgoth{F}'$ of supplementing functions contain the same number of functions, and also their complements (in the set of all appropriate supplementing functions) contain the same number of functions, then either both $\textgoth{F}, \textgoth{F}'$ belong to $\hat Q^{M,X}$ or neither of them does. Differently from the case of Mostowski quantifiers, this clause is a much stricter restriction than invariance by isomorphisms: it selects quantifiers which only discriminate sets of supplementing functions according to a quantitative component (number of functions) and not to a qualitative component (kind of functions). The quantifier $\hat\exists_{=\hat 3}^{M,X}$ based on \cite{DurEbbKonVol2011} is purely quantitative and satisfies this clause\footnote{Here we are using our nontriviality assumption, that $\textgoth F\in\hat\exists_{=\hat 3}^{M,X}$ entails $\emptyset\notin\textgoth F$.}, but the team quantifier $\hat\exists_{=3}^{M,X}$, based on Engstr\"om's notion of quantifier, does not. More generally, we will see in the next section that each Engstr\"om quantifier $Q$ can be identified with a team quantifier $\hat E(Q)$; save for trivial cases, no quantifier of the form $\hat E(Q)$ satisfies our condition on cardinalities, although many quantifiers of this form are isomorphism invariant. These quantifiers all express the same quantitative component (``There exists a function'') but differ for quality. Also the quantifier $\hat\exists_{<\omega,=\omega}$ is not accepted. Here ``There are finitely many functions'' is the quantity, while ``picking countable sets'' is the quality.

 We might try to formulate a condition on cardinalities which is not so restrictive, and vindicates the role of ``qualities''.
 The idea is that the conditions on cardinality should be imposed only on families of functions that fall within the ``quality'' of the quantifier. We thus need a precise definition of what the ``quality'' of a quantifier is. 
We suggest the following: for each structure $M$ and team $X$ on $M$, let $q(\hat Q,M,X):= \bigcup \hat Q^{M,X} =\{F\in \wp(M)^X | \exists \textgoth{F}\in \hat Q^{M,X} (F\in \textgoth{F})\}$ (the set of all functions that occur in some family of $\hat Q^{M,X}$). Then we decide to accept as (local) team quantifiers only the families $\hat Q^{M,X}\in \wp(\wp(\wp(M)^X))$ such that 
\begin{quote}
 If $\textgoth{F}\in \hat Q^{M,X}$, $\textgoth{F}'\subseteq q(\hat Q,M,X)$, $card(\wp(M)^X\setminus \textgoth F) = card(\wp(M)^X\setminus \textgoth F')$ and $card(\textgoth{F}) = card(\textgoth{F}')$, then $\textgoth{F}'\in \hat Q^{M,X}$.
\end{quote}
This condition is sufficient to exclude some bizarre quantifiers as ``There are $k$ functions which pick sets of cardinality $k$, for some $k$'' (in which quantity and quality are interdependent), while, for example, tolerating the quantifier $\hat\exists_{<\omega,=\omega}$ that we discussed above. If the quantifier $\hat Q$ is not invariant under isomorphism, it seems  reasonable to make our clause stricter by redefining the notion of ``quality'': we do not want simply to take it to be $\bigcup\hat Q^{M,X}$, but also require it to be closed under isomorphisms (lifted to the level of functions $F\in\wp(M)^X$).

\section{Interpreting Engstr\"om's quantifiers} \label{REA}

We want now to interpret and locate more clearly the generalized quantifiers of Engstr\"om (as reviewed in section \ref{SECENG}) within the framework for generalized team quantifiers that we introduced in section \ref{SECSECOND} In particular, we want to understand to what extent Engstr\"om's proviso that his definition be applied only to monotone quantifiers is justified. 

Concerning this last point, Engstr\"om only offers an example in defense of this restriction:

\begin{quote}
(...) This applies even for non-monotone quantifiers but for those quantifiers $Q$ the
truth condition above does not make a whole lot of sense as the following example
shows. Let $M = \mathbb{N}$ and $Q = \{A\}$ where $A$ is the set of even numbers. According
to the truth condition above $M, \{\emptyset\}\models Qx(x = x)$ since there is a team $X = A(x)$
such that $M,X \models x = x$. \citep[sec. 2; we slightly changed the notation]{Eng2012}
\end{quote}

Engstr\"om does not provide any explanation why the quantifier in this example seems to him to be treated incorrectly. Our guess is that, in analogy with the semantic clause of Mostowski, he expects a sentence $Qx\psi$ to be satisfied if and only if the set of elements which satisfy $\psi$ is \emph{exactly} $A$. This is not the case in the example, for $x=x$ is satisfied by all elements of $\mathbb{N}$. So, it seems that the worries of Engstr\"om are driven by the desire to obtain conservative extensions over logics which accomodate Mostowski quantifiers. Such a conservative extension result has been proved for \emph{sentences} of Mostowski extensions of first-order logic \citep[Proposition 2.4.4]{Eng2012} and then generalized to an embedding of Mostowski extensions of existential second-order logic\footnote{In line with our earlier conventions, we might denote such extensions as $ESO^Q$. The notation used in \cite{EngKon2013} was $ESO(Q)$.} into corresponding extensions of Dependence logic \citep{EngKon2013}. The following example shows in what way the interpretation of Engstr\"om quantifiers is deviant, over open formulas, with respect to Mostowski quantifiers, and compares it to the interpretation via team quantifiers (as defined in the previous section).

\begin{example}
Let us consider the sentence $\theta:\forall x\exists_{=3}yP(x,y)$, which can be seen both as a (generalized) first-order sentence and as a (generalized) $IF$ sentence. According to first-order semantics and Mostowski's definition,
\[
M\models \forall x\exists_{=3}yP(x,y) \Leftrightarrow \text{for all } s \text{ s.t. } dom(s) =\{x\}, M,s\models \exists_{=3}yP(x,y) 
\]
\[
\Leftrightarrow \text{for all } s \text{ s.t. } dom(s) =\{x\}, \{a\in M \ | \ M,s(a/y)\models P(x,y)\} \in \exists^M_{=3}.
\] 
This is not
 the usual notion of supplementing (and Engstr\"om's definition for the non-monotone cases is probably intended to bridge this gap). Instead, the team semantics of the universal quantifier, plus Engstr\"om's semantics for generalized quantifiers, yields: 
\[
M\models \forall x\exists_{=3}yP(x,y) \Leftrightarrow M,\{\emptyset\}[M/x]\models \exists_{=3}yP(x,y) 
\]
\[
\Leftrightarrow \text{there is a function } F: \{\emptyset\}[M/x]\rightarrow \exists_{=3}^M \text{ s.t. } M,X[MF/xy]\models P(x,y).
\] 
This is a different meaning; it states that for all assignments $s$ s.t. $dom(s) =\{x\}$, and for a specific function $F$, it holds that $\{F(s) \ | \ M,s(F(s)/y)\models P(x,y)\}\in \exists^M_{=3}$. This is a weaker condition; it does not require that the set $[\theta]_{M,s}$ of all $a\in M$ such that $s(a/y)$ satisfies $P$ to be an element of $\exists^M_{=3}$; but only that, for each $s$, we can pick a \emph{subset} of $[\theta]_{M,s}$ that falls in $\exists^M_{=3}$ (and whose assignments all satisfy $P(x,y)$). So, under team semantics, the very same sentence is true in more models than in the first-order case.   
\end{example}

Remember from example \ref{EXTHREE} that formulas of the form $\exists_{=3}\psi$ are in general equivalent to team quantifier expressions of the form $\hat\exists_{=3}\psi$.  
 The fact that the quantifier $\hat\exists_{=3}$  can \emph{also} be expressed in terms of the first-order local quantifier $\exists_{=3}^M$ (similarly to what happened for  $\hat\exists,\hat\forall$) is a peculiar accident, which plausibly does not apply to most team quantifiers. But we will see that each Engstr\"om quantifier (irrespectively of monotonicity properties) has a corresponding team quantifier. Before that, let us add some further remarks on the example of Engstr\"om that was quoted at the beginning of the section. One fishy aspect of this example is that it discusses an $FO(Q)$ extension; in particular, one key element in Engstr\"om's argument seems to be the fact that $x=x$ is a flat formula, so that it makes sense to say that it is satisfied by this or that number. As we have seen in the previous section, in the more general context of team semantics, if a formula is not flat it makes no sense to say it is satisfied by an element of the domain (resp. a tuple of elements). 
 It is not a surprise, then, that the above-mentioned conservativity results only hold at the level of sentences.\footnote{It is instead surprising that such a conservativity result does hold at the level of open formulas, if one considers quantifier extensions of the so-called Independence logic $I$ \citep[Theorem 4.1]{EngKon2013}. Yet the correspondence is still imperfect in some ways; most notably, the $FO^Q$ fragment of $ESO^Q$ does not translate into the $FO(Q)$ fragment of $I(Q)$.} Notice that the example is perfectly meaningful if we read the quantifier $Q$ in a second-order fashion (which is justified, since the quantifier occurs sentence-initially -- see the remarks in the previous section). Under this reading, the sentence $Qx(x=x)$ in the example states that ``there is a set, containing exactly the even numbers, which satisfies $x=x$''; it is then reasonable that $Qx(x=x)$ be true in $\mathbb{N}$ since the set of even numbers (or, more precisely, the corresponding team $X=\{\{(x,2n)\} \ | \ n \in \mathbb{N}\}$) does satisfy $x = x$. Within this reading of the quantifiers, using the rough analysis of the previous sections, ``there exists'' is the \emph{quantity}, while ``containing exactly the even numbers'' is the \emph{quality} of the quantifier. Underlying these observations is the claim that all the Engstr\"om quantifiers can be thought, instead, as \emph{existential} second-order quantifiers (their ``quantity'' is just the statement of the existence of a \emph{nonempty} set). This claim can be stated in a more general form and proved:


\begin{thm} \label{TEOEXLIFT}
Let $Q$ be a Mostowski quantifier, and consider the second-order quantifier $\hat E(Q)$ (for brevity: $\hat Q$) given by the condition, for all domains $M$ and teams $X$:
\[
\hat Q^{M,X} := \{\textgoth{F}\in \wp(\wp(M)^X) \ | \ \exists F\in \textgoth{F}\forall s\in X (F(s)\in Q^M)\}
\]
Then, for every structure $M$, team $X$ and $\psi$ $IF(\mathcal Q \cup \{Q\})$ formula,
\[
M,X \models \psi \iff M,X \models \psi^*
\]
where $\psi^*$ is the $IF(\mathcal Q)[\hat Q]$ formula obtained from $\psi$ by replacing all occurrences of $Q$ with $\hat Q$. The same statement holds for extensions of $DF$ logic, or for formulas with occurrences of both slashed and backslashed quantifiers.
\end{thm}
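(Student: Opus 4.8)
The plan is to prove the equivalence by induction on the structure of $\psi$, exploiting the fact that the team quantifier $\hat Q = \hat E(Q)$ has been defined precisely so that its semantical clause unwinds to Engstr\"om's clause for $Q$. Since the translation $(\cdot)^*$ rewrites only the symbol $Q$ and leaves atoms, (negated) atoms, the connectives $\land,\lor$, the ordinary quantifiers $\forall,\exists$, and every other quantifier $R\in\mathcal Q$ untouched, all cases except the one for $Q$ follow directly from the induction hypothesis: in each of them the governing semantical clause refers only to immediate subformulas and has literally the same form on the $\psi$-side and the $\psi^*$-side, so replacing $\chi$ by $\chi^*$ inside is exactly what the hypothesis licenses.

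The crux is the case $\psi = (Qv/V)\chi$, where $\psi^* = (\hat Q v/V)\chi^*$. First I would unfold the right-hand side by the team-quantifier clause: $M,X\models(\hat Q v/V)\chi^*$ iff $[\chi^*]^{v,V}_{M,X}\in\hat Q^{M,X}$. Unpacking the meaning $[\chi^*]^{v,V}_{M,X}=\{F:X\to\wp(M)\mid F \text{ is } V\text{-uniform and } M,X[F/v]\models\chi^*\}$ together with the definition of $\hat Q^{M,X}=\hat E(Q)^{M,X}$, this membership says exactly that there is a $V$-uniform $F:X\to\wp(M)$ with $M,X[F/v]\models\chi^*$ and $F(s)\in Q^M$ for every $s\in X$. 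But a $V$-uniform $F$ satisfying $F(s)\in Q^M$ for all $s\in X$ is precisely a $V$-uniform function $F:X\to Q^M$, so the condition is verbatim Engstr\"om's clause for $(Qv/V)\chi^*$. Applying the induction hypothesis to $\chi$ (which yields $M,X[F/v]\models\chi\iff M,X[F/v]\models\chi^*$ for each such $F$) then gives $M,X\models(Qv/V)\chi\iff M,X\models(\hat Q v/V)\chi^*$, as required.

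Because the argument is a matter of unfolding definitions that were arranged to coincide, I do not expect a genuine obstacle; the only points needing care are bookkeeping ones. One must keep track that in $\psi^*$ the surviving Engstr\"om quantifiers from $\mathcal Q$ are interpreted by the \emph{same} clause on both sides, so their induction step is trivial. The degenerate case $X=\emptyset$ should be noted separately, but it is immediate from the empty team property (Theorem \ref{EMPTYSETR}), which both $IF(\mathcal Q\cup\{Q\})$ and $IF(\mathcal Q)[\hat Q]$ enjoy. Finally, the $DF$ and mixed slashed/backslashed variants require only replacing ``$V$-uniform'' by ``$(dom(X)\setminus V)$-uniform'' in the relevant clauses and adding the wholly parallel backslashed quantifier case; no new idea is involved.
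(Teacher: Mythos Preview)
Your proposal is correct and follows essentially the same route as the paper: a structural induction in which the only nontrivial case is $\psi=(Qv/V)\chi$, handled by unfolding the definitions of $[\chi^*]^{v,V}_{M,X}$ and $\hat Q^{M,X}$ to recover Engstr\"om's clause and then invoking the induction hypothesis on $\chi$. The paper's argument is the same in content, merely written out as two separate implications rather than a single biconditional chain; your additional remarks on $X=\emptyset$ and the backslashed variant are extra care not present in the paper but entirely in its spirit.
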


\begin{proof}
By induction on the syntax of $\psi$; the only nontrivial case  is $\psi = (Qv/V)\chi$ (the case $\psi = (Qv\backslash V)\chi$ is analogous).

Suppose $M,X\models \psi$. Then there is a $V$-uniform function $F:X\rightarrow Q^M$ s. t. $M,X[F/v]\models \chi$. By the inductive assumption, $M,X[F/v]\models \chi^*$ So, $F\in [\chi^*]^{v,V}_{M,X}$. So $[\chi^*]^{v,V}_{M,X}\in \hat Q^{M, X}$. Therefore $M,X\models (\hat Qv/V)\chi^*$, i.e., $M,X\models\psi^*$.

Vice versa, assume $M,X\models (\hat Qv/V)\chi^*$. This means that $[\chi^*]^{v,V}_{M,X}\in \hat Q^{M, X}$. But then there is a function $F\in [\chi^*]^{v,V}_{M,X}$ such that  $F(s)\in Q^M$ for each $s\in X$; this latter condition states that (1): $F$ is a function $X\rightarrow Q^M$. $F$ being an element of $[\chi^*]^{v,V}_{M,X}$ means that (2): $F$ is $V$-uniform and that $M,X[F/v]\models\chi^*$. By inductive hypothesis, we have (3): $M,X[F/v]\models\chi$. The statements (1), (2) and (3) are the semantical conditions for asserting that $M,X\models (Qv/V)\chi$.
\end{proof}

So, each Engstr\"om quantifier $Q$ is equivalent to some team quantifier $\hat E(Q)$ which is existential and monotone in the team-theoretical sense; the informed reader can probably see the analogy between this operator $E$ and the so-called existential lift which can be used to produce a determiner with a collective reading starting from a determiner with distributive reading \citep{Sch1981,Doe1993}. 

In case 1) the Engstr\"om quantifier $Qv$ is monotone (in the first-order sense), and 2) the formula $\psi$ to which $Q$ is applied is flat, the meaning of the formula $Qv\psi$ (or its equivalent $E(Q)v\psi$)  coincides with that of its first-order equivalent (where $Q$ is interpreted as a Mostowski quantifier), in the sense that, for all $M$ and $X$:
\[
M,X\models Qv\psi \iff \text{for all } s\in X, \ M,s\models Qv\psi
\]
 (on the left we have team semantics with the clause of Engstr\"om, on the right Tarskian semantics with the clause of Mostowski). The following example shows what goes wrong if either 1) or 2) is not satisfied.


\begin{example}
1) Let us consider the ``most'' quantifier of \cite{Sev2014}, which is monotone (in the first-order sense). For simplicity, consider just an occurrence of this quantifier at the beginning of a sentence. Thinking in the framework of team quantifiers, the proper interpretation of $M,\{\emptyset\}\models \operatorname{most} y\psi$ in $IF$ logic is  that ``there is a set (of $y$s), containing most elements of $M$, such that $\psi$ holds \emph{of} this set'' (not \emph{on} this set, which would mean that $\psi$ holds of each single element in the set; we are treating $\psi$ as a \emph{global}, or \emph{collective}, property of the set). If $\psi$ is a flat $IF$ formula (of one free variable), this assertion is equivalent to ``$\psi$ holds of most $y$s''. The reason    
is simply that the semantical clause of Engstr\"om tells us that there is a set that contains most elements of $dom(M)$ and that satisfies $\psi$; flatness tells us that each element of that set satisfies $\psi$; and by monotonicity then the set of \emph{all} individuals which satisfy $\psi$ is a set containing most of the elements of $dom(M)$ (the converse is straightforward). Instead, 
 for formulas that are not flat, it simply makes no sense to say that ``$\psi$ holds for most elements'', and we see no clear way to compare the behavior of this ``most'' quantifier in this context with the classical behaviour of its Mostowskian counterpart.

 2) Consider the quantifier $\exists_{\leq 2}$, such that $\exists_{\leq 2}^M$ is the set of at-most-two-element subsets of $M$. It is not upwards monotone; we can see now that, if a quantifier is not upwards monotone, the interpretation of sentences can easily diverge from the intended first-order meaning, even when the quantifier is applied to flat formulas. Indeed,  $\exists_{\leq 2}$  is clearly  not the quantifier ``there are at most two''; this is because any  sentence $\exists_{\leq 2}y\psi$, with $\psi$ first-order, is trivially true in team semantics (since $\psi$ is satisfied by $\emptyset$, see Theorem \ref{EMPTYSETR}). Thus, also its equivalent team quantifier $\hat E(\exists_{\leq 2})$ is trivial over $FO$, $IF$ or $DF$ logic.\footnote{The literature is not devoid of team-theoretical logics which do not satisfy the empty team property; for example, such logics are obtained by extending $FO$, $IF$ or $DF$ with an operator for contradictory negation. The interpretation of $\exists_{\leq 2}$ and $\hat E(\exists_{\leq 2})$ might not be trivial over such logics.}

Similarly, the non-monotone quantifier $\exists_{=2}$  (such that $\exists_{=2}^M$ is the set of two-element subsets of $M$) does not have the classical interpretation.
Let $\psi$ be a first-order formula of one free variable $x$. Clearly, according to team semantics the sentence $\exists_{=2}x\psi$ (resp. $\hat E(\exists_{=2})$) is satisfied in any domain where $\psi$ applies to \emph{at least} two elements, and this is not the intended first-order meaning. When the quantifier is sentence-initial, it does respect the second-order meaning (``there is a set of $2$-elements'').
\end{example}

We open a parenthesis in order to understand better the distinction we made above between a property holding \emph{of} a set, or instead \emph{on} a set (collective vs. distributive reading). We may consider an analogy with Aczel's third-order characterization of (first-order) sentence-initial generalized quantifiers \citep{Acz1975}. Let us focus again on the case of a quantifier in sentence-initial position. If $\textgoth P$ is a second-order unary predicate (applying to one variable for unary first-order predicates), we can define the first-order generalized quantifier $Q$ by a third-order clause:
\[
M\models Qz\psi \Leftrightarrow (M,Q^M)\models\exists S(\textgoth P(S) \land \forall z(S(z)\rightarrow \psi(z))).
\]
where $(M,Q^M)$ is a shorthand for the \emph{second}-order structure which assigns the local quantifier $Q^M$ as interpretation for the second-order predicate $\textgoth P$.
This clause combines $Q$ with the \emph{on} reading of $\psi$: $\psi$ holds of every element of the set $X$. How can we express instead the \emph{of} reading, and thus the semantics of Engstr\"om quantifiers? We just need to raise Aczel's definition to a fourth-order formula. Let $\hat{P}$ be a third order predicate applying to a second-order unary predicate of the same type as the $\textgoth P$ above; 
 then, one can define a quantifier over sets as:
\[
M\models Qz\psi \Leftrightarrow (M,|\hat Q|^M)\models\exists \textgoth{P}(\hat{P} (\textgoth{P}) \land \forall S(\textgoth{P}(S)\rightarrow \tau_\psi(S))).
\]
where $(M, |\hat Q|^M)$ is the \emph{third} order structure assigning, as interpretation for the third-order predicate symbol $\hat{P}$, the sentence-initial version of the local team quantifier $\hat Q = \hat E(Q)$ (as defined in the previous section); and $\tau_\psi(S)$ is the translation, due to \cite{Hod97b}, of the $IF$ formula $\psi$ into an existential second-order sentence with the extra predicate symbol $S$. (We cannot simply write $\psi(S)$, because an $IF$ formula applies to individual variables, not variables for predicates).
This equivalence captures the \emph{of} meaning ($\psi$ expresses something about $X$, not about its elements).

Let us return to the analysis of the Engstr\"om quantifier. We have a further argument against the restriction of such quantifiers to the monotone case; it is a proof-theoretical observation. The point is that a good number of the known inference rules of $IF$ logic which involve existential quantifiers hold in some similar form for Engstr\"om quantifiers; we analyze a number of these, and their consequences, in the Appendix. The monotonicity restriction does not affect any of the rules we examined; in other words, we could not find any proof-theoretical discriminant between the monotone and the nonmonotone case. Similarly, on the semantical side, we saw (Theorem \ref{TEOLOCDF} and following observations) that whether a generalized quantifier extension of a local logic has the locality property or not does not dependend on monotonicity (the relevant restriction is union closure, or, in the case of downward closed logics, no restriction at all).


\section{Capturing the non-monotone quantifiers}   \label{SECNONMON}

In the previous section, we saw that the monotone Engstr\"om quantifiers happen to have the usual, first-order meaning whenever they are applied to first-order formulas; and also, that this fails to be the case if the quantifier is non-monotone. One may wonder whether there is some different notion of non-monotone quantifier in team semantics which, when applied to a first-order formula, gives the same results as would obtain by Tarskian semantics. Somewhat surprisingly, the answer is yes.
Engstr\"om gave a second semantical clause, meant to cover both the monotone and non-monotone quantifiers. We briefly recall it; we will write $\models^b$ (for ``bounded'') for the satisfaction relation of extensions $FO^b(\mathcal Q)$ , $IF^b(\mathcal Q)$ or $DF^b(\mathcal Q)$ in which the generalized quantifiers are interpreted in this way. For functions $F,F':X\rightarrow \wp(M)$, write $F\leq F'$ if, for all $s\in X$, $F(s) \subseteq F(s')$. Then, Engstr\"om's semantical clause reads: $M,X\models^b Qx\psi$ iff there is a function $F:X\rightarrow \wp(M)$ such that
\begin{quote} \label{SECONDDEF}
(1) $M,X[F/x]\models \psi$, and\\
(2) for each $F'\geq F$, if $M,X[F'/x]\models \psi$ then for all $s\in X: F'(s)\in Q^M$.\footnote{Actually in \cite{Eng2012} the second clause ends with ``$F(s)\in Q$'', but reading the paragraphs that follow, it becomes clear that the reference to $F$ instead of $F'$ is a typo.} 
\end{quote}
This clause is successful in its purpose: 1) on monotone quantifiers, it gives the same results as the previous definition \citep[Prop.2.10]{Eng2012}, and 2) a $FO^b(Q)$ formula $\psi$ is satisfied by a team $X$ if and only if each assignment $s\in X$ satisfies $\psi$ in the Mostowski sense \citep[Prop.2.11]{Eng2012}.

However, the reader would probably not be surprised, at this point, to see that even this more refined definition may fail to capture the intuitive first-order reading of the quantifier when the quantifier itself is applied to an $IF$ formula. 

\begin{example}
 Consider the quantifier $\exists_{=\omega}$ such that $\exists_{=\omega}^M = \{S\subseteq dom(M) \ | \ card(S) = \omega\}$ (in the first-order reading, ``there are countably (and not finitely) many''). Now, since $IF$ logic is capable of talking of bijections, we should expect to be capable of expressing, in $IF(\exists_{=\omega})$, the concept $C$ that ``there are countably many $x$ that can appear in the domain of some injective function of codomain $P$''. The mere existence of a bijection of the domain $M$ with $P^M$ can be expressed by the $IF$ sentence
\[
\forall x\forall y(\exists u/\{y\})(\exists v/\{x,u\})(x=y \leftrightarrow u=v\land P(u)).\footnote{In order to fit with our syntax, the subformula $x=y \leftrightarrow u=v$ should be replaced by some classical equivalent expressed in terms of $\land,\lor$ and negation of atoms, such as $(x=y \land u=v)\lor(x\neq y \land u\neq v)$. \emph{Any} such equivalent will do, thanks to the fact that this is a first-order formula.}
\]

This can be most easily seen using the Skolemization procedure for $IF$ sentences \citep[sec. 4.3]{ManSanSev2011}. 
Treating $u$ as a function of $x$ ($f(x)$) and $v$ as $g(y)$, the $IF$ sentence is equivalent to the existential second-order sentence
\[
\exists f\exists g \forall x\forall y (x=y \leftrightarrow f(x) = g(y)\land P(f(x))).
\]
Indeed, $x=y \rightarrow f(x) = g(y)$ states that $f$ and $g$ denote the same function; $f(x) = g(y) \rightarrow x=y$ forces this function to be injective; by $P(f(x)))$, the image of the function is contained in $P^M$.

So, it seems reasonable that the existence of countably many $x$ that can be in the domain of an injection in $P$ be expressed by 
\[
\varphi: \exists_{=\omega} x\forall y(\exists u/\{y\})(\exists v/\{x,u\})(x=y \leftrightarrow u=v\land P(u)).
\]

  If $(M,P^M)$ is a structure of uncountable domain, with $P^M\subset dom(M)$ a countable set, the statement $C$ should be false (because there are uncountably many such $x$: all $x\in dom(M)$). Yet, the formula $\varphi$ is true in $M$. Indeed, let $F:\{\emptyset\}\rightarrow dom(M)$ be any function such that $card(F(\emptyset)) = \omega$; fix a bijection $g: F(\emptyset)\rightarrow P^M$ and an element $a\in P^M$  and let $J:\{\emptyset\}[FM/xy]\rightarrow \wp(M)$ be $J(s) := \{g(s(x))\}$, if $s(x)\in F(\emptyset)$, and $J(s) := a$, otherwise. Similarly, define $K:\{\emptyset\}[FMJ/xyu]\rightarrow \wp(M)$ by $K(s) := \{g(s(y))\}$, if $s(y)\in F(\emptyset)$, and $K(s) := a$, otherwise. It should be clear that $J$ is $\{y\}$-uniform, $K$ is $\{x,u\}$-uniform, and $M,\{\emptyset\}[FMJK/xyuv] \models (x=y \leftrightarrow u=v\land P(u))$; the semantical clauses then yield $M,X[F/x]\models\forall y(\exists u/\{y\})(\exists v/\{x,u\})(x=y \leftrightarrow u=v\land P(u))$.      We still have to verify that the clause 2) of the semantics of $\exists_{=\omega}$ is true. Suppose for the sake of contradiction that there is a function $F':\{\emptyset\}\rightarrow dom(M)$, such that $F'\geq F$\footnote{Notice that the assumption $F'\geq F$ in the argument that follows.}, $card(F'(\emptyset))>\omega$ and $M,X[F'/x]\models\forall y(\exists u/\{y\})(\exists v/\{x,u\})(x=y \leftrightarrow u=v\land P(u))$. Then there are functions $J',K'$ such that $M,\{\emptyset\}[F'MJ'K'/xyuv] \models (x=y \leftrightarrow u=v\land P(u))$. But $M,\{\emptyset\}[F'MJ'K'/xyuv]\models x=y \leftrightarrow u=v$ entails that $card(\bigcup\operatorname{Im} J')\geq card(F'(\emptyset))>\omega$. Instead, $M,\{\emptyset\}[F'MJ'K'/xyuv]\models P(u)$ entails that $\bigcup\operatorname{Im} J'\subseteq P^M$, so that $card(\bigcup\operatorname{Im} J')\leq card(P^M) = \omega$, and we have a contradiction. Therefore, if $F'\geq F$ and $M,\{\emptyset\}[F'/x]\models\forall y(\exists u/\{y\})(\exists v/\{x,u\})(x=y \leftrightarrow u=v\land P(u))$, we have $F'(\emptyset)\in \exists_{=\omega}^M$. We conclude that $M\models^b \varphi$. 

The seemingly paradoxical nature of this result disappears as soon as we read $\exists_{=\omega}$ in its team theoretical interpretation; then we can see that $\varphi$ just states the fact (true precisely in any domain such that $card(P^M) = \omega$) that \emph{there are countable sets} that can be injected in $P^M$, and no larger set can. It is the collective, and not the distributive reading which is at work here.

The concept $C$ is in general equivalent to the statement that both $dom(M)$ and $P^M$ are countably infinite.
It must be remarked that we cannot expect to be able to define the concept $C$ in $IF$ logic, because it is well-known that $IF$ logic (which has the L\"owenheim-Skolem property, and can express bijections) cannot define the countability of a set. We might wonder whether it is possible at all to express $C$ in quantifier extensions of $IF$ logic. We see no straightforward way to do this by means of Engstr\"om quantifiers. But there is a team quantifier that does the job. Consider the local team quantifier
\[
\hat Q_{\omega set}^{M,X}:= \{\textgoth F \in \wp(\FU{X}{\wp(M)}) \ | \ \exists F\in \textgoth{F}\exists s\in X(card(F(s))=\omega) 
\]
\[
\text{ and } \forall F\in \textgoth{F}\forall s\in X(card(F(s))\leq\omega)\} 
\] 
Then $C$ is captured by $Q_{\omega set}v (v=v) \land Q_{\omega set}v P(v)$, as can be easily verified. 
\end{example}

The example above shows that the interpretation of quantifiers is problematic also with the semantics given by $\models^b$. It is however straightforward to see that these kinds of quantifiers, as those given by Engstr\"om's earlier definition (Theorem \ref{TEOEXLIFT}), are identifiable with appropriate team quantifiers, by means of a different lift $\hat B: Q \mapsto \hat B(Q)$ (``bounded lift''). 

\begin{thm} \label{TEOAELIFT}
Let $Q$ be an Engstr\"om quantifier, and consider the second-order quantifier $\hat B(Q)$  given by the condition, for all domains $M$ and teams $X$:
\[
\hat B(Q)^{M,X} := \{\textgoth F \in \wp(\FU{X}{\wp(M)}) \ | \ \exists F\in \textgoth{F}\forall s\in X(F(s) \in Q^M \land \forall F'\geq F(F'\in \textgoth{F} \Rightarrow F'\in Q^M))\} 
\] 

Then, for every structure $M$, team $X$ and $\psi$ $IF(\mathcal Q \cup \{Q\})$ formula,
\[
M,X \models^b \psi \iff M,X \models \psi^*
\]
where $\psi^*$ is the $IF(\mathcal Q)[\hat B(Q)]$ formula obtained from $\psi$ by replacing all occurrences of $Q$ with $\hat B(Q)$. The same statement holds for extensions of $DF$ logic, or for formulas with occurrences of both slashed and backslashed quantifiers.
\end{thm}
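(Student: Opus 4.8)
The plan is to argue by induction on the syntax of $\psi$, in exact parallel with the proof of Theorem \ref{TEOEXLIFT}. The base cases (possibly negated atoms and identities), the connective cases ($\land$, $\lor$), and the cases for $\forall$, $\exists$ and for the quantifiers in $\mathcal Q$ are all immediate: on these operators $\models^b$ and $\models$ are defined by identical clauses, and since $\psi^*$ differs from $\psi$ only by the replacement of $Q$ with $\hat B(Q)$, the inductive hypothesis applied to the immediate subformulas closes each of these cases verbatim as in Theorem \ref{TEOEXLIFT}. The only genuinely new case is $\psi = (Qv/V)\chi$ (so that $\psi^* = (\hat B(Q)v/V)\chi^*$); the backslashed case $\psi = (Qv\backslash V)\chi$ is handled identically, replacing $V$-uniformity by $(dom(X)\setminus V)$-uniformity throughout.

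For the quantifier case I would abbreviate $\textgoth F := [\chi^*]^{v,V}_{M,X}$. By the inductive hypothesis applied to $\chi$ over each supplemented team $X[F/v]$, we have $M,X[F/v]\models^b\chi \iff M,X[F/v]\models\chi^*$; hence $\textgoth F$ coincides with the set of all $V$-uniform functions $F:X\rightarrow\wp(M)$ with $M,X[F/v]\models^b\chi$. Now I would simply unfold the two sides. On the right, the team-semantic clause gives $M,X\models(\hat B(Q)v/V)\chi^* \iff \textgoth F\in\hat B(Q)^{M,X}$, and the defining condition of $\hat B(Q)^{M,X}$ says that there is an $F\in\textgoth F$ with $F(s)\in Q^M$ for every $s\in X$ and with the property that every $F'\geq F$ lying in $\textgoth F$ also satisfies $F'(s)\in Q^M$ for all $s$. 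On the left, Engstr\"om's bounded clause says that there is a ($V$-uniform) function $F$ with $M,X[F/v]\models^b\chi$, i.e. $F\in\textgoth F$, such that every $F'\geq F$ with $M,X[F'/v]\models^b\chi$, i.e. every such $F'\in\textgoth F$, satisfies $F'(s)\in Q^M$ for all $s$. Instantiating this last condition at $F'=F$ yields $F(s)\in Q^M$ for all $s$, which is exactly the first conjunct of the $\hat B(Q)$ condition, while the remaining universal statement is its second conjunct. Thus both sides assert literally the same existential-over-$F$ sentence, and the induction closes.

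The one point that requires care — and where I expect the real work to sit — is the exact range of the growth quantifier $F'$. In the definition of $\hat B(Q)^{M,X}$ this quantifier is effectively confined to $\textgoth F = [\chi^*]^{v,V}_{M,X}$, a set which by equation (\ref{GREATDEF2}) contains only $V$-uniform functions, whereas Engstr\"om's original bounded clause, written for the unslashed setting, quantifies over arbitrary $F'\geq F$ subject only to $M,X[F'/v]\models\chi$. To make the equivalence go through for slashed and backslashed quantifiers I would fix, as the intended $IF$/$DF$ reading of the bounded clause, that $F'$ ranges only over $V$-uniform (resp. $(dom(X)\setminus V)$-uniform) functions, so that ``$F'$ admissible and $M,X[F'/v]\models^b\chi$'' becomes synonymous with ``$F'\in\textgoth F$''. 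With this alignment secured the matching of the two formulations is a direct unfolding; without it, clause (2) would quantify over strictly more functions on the left than on the right, and the two conditions need not coincide. Verifying that this uniformity bookkeeping is consistent across the whole induction — in particular that $\textgoth F$ is closed under the relevant uniformity and that no non-uniform $F'$ is silently tested — is the main obstacle.
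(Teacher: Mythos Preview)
Your proposal is correct and follows essentially the same route as the paper: induction on $\psi$, with the only nontrivial case being $\psi=(Qv/V)\chi$, handled by unfolding both clauses and matching them via the inductive hypothesis applied to $\chi$ (the paper writes out only the $\models^b\Rightarrow\models$ direction and declares the converse ``similar''). Your extra care about the range of $F'$ is well placed: the paper never spells out the slashed version of $\models^b$, and its proof silently treats clause~(2) as quantifying over $V$-uniform $F'$ (otherwise the ``similar'' converse direction would not go through, since a non-$V$-uniform $F'$ with $M,X[F'/v]\models^b\chi$ need not lie in $[\chi^*]^{v,V}_{M,X}$); so your stipulation is exactly the reading the paper needs, only made explicit.
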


\begin{proof}
We give the argument for $IF$. We reason by induction on the syntax of $\psi$; the significant case is $\psi = (Qv/V) \chi$. Assume first that $M,X\models^b\psi$. Then there is a $V$-uniform function $F: X \rightarrow Q^M$ such that 1) $M,X \models^b \chi$, and 2) for all $F'\geq F$ s.t. $M,X \models^b \chi$, it holds that $\forall s \in X(F'(s)\in Q^M)$. By 1) and the inductive hypothesis, $F\in [\chi^*]^{v,V}_{M,X}$. Suppose then that $F'\geq F$ and $F'\in [\chi^*]^{v,V}_{M,X}$. By the inductive hypothesis we have $F'\in [\chi]^{v,V}_{M,X}$; this, together with 2) and $F'\geq F$ gives that, for all $s\in X$, $F'(s)\in Q^M$. So $[\chi^*]^{v,V}_{M,X}\in \hat B(Q)^{M,X}$, that is, $M,X\models (\hat B(Q)v/V)\psi$.

The argument in the opposite direction is similar.
\end{proof}

So, also the quantifiers interpreted according to Engstr\"om's second definition can be identified with appropriate team quantifiers. We wish to point out that one of the reasons for the deviation of these quantifiers from first-order meanings seems to be the fact that the condition $F'\in \textgoth{F} \Rightarrow F'\in Q^M$ is applied only to functions $F'\geq F$; it would seem reasonable to apply it to all functions. We may indeed try to build a counterexample in which a formula of the form $\exists_{=\omega} x\psi$ is true because there is a function $F$ satisfying 1) and 2) (for the quantifier $\exists_{=\omega}$); but at the same time there is a function $G$ which satisfies 1) but not 2). This means that there is a $G'\geq G$ such that $card(G'(\emptyset))>\omega$ and $M,\{\emptyset\}[G'/x]\models \psi$. Let $H$ be the function such that $H(\emptyset) = F(\emptyset) \cup G'(\emptyset)$. Now, in case $\psi$ is flat, it immediately follows that  $M,\{\emptyset\}[H/x]\models \psi$; so $H\geq F$ and $M,X[H/x]$, but $card(H(\emptyset))>\omega$: condition 2) is contradicted. However, if $\psi$ is not flat, the argument does not carry over; we build here a concrete counterexample.

\begin{example}
Let $M=(dom(M),<)$ be a structure which interprets $<$ as a partial order such that 1) it has a minimum element $a$, 2) it has two maximal chains (linear suborders) ending, respectively, in maximal elements $b_1$ and $b_2$, and 3) the first maximal chain is infinitely countable, while the second maximal chain is uncountable. Call $C_1$ and $C_2$ the sets of elements in the first and, respectively, in the second chain. To make things easier, we may also assume that $C_1\cap C_2 = \{a\}$ is the only point in common. Now, the property ``$R$ is a chain'' is expressed in $M$ by the first-order sentence (of signature $\{<,R\}$) : $\forall x \forall y(R(x,y)\lor R(y,x)\lor x=y)$. And it is a downward closed property: every subset of a linear order is a linear order. Then, Theorem 4.9 of \cite{KonVaa2009} guarantees the existence of a formula $\psi(x,y)$ which is satisfied by a team $X$ on $M$ iff $X(x,y)$ is a linear order. Consider the formula $\exists_{=\omega}x(\exists y_{=\omega}/\{x\})\psi(x,y)$.  The function $F(\emptyset):= C_1$ is such that: 1a) $M, \{\emptyset\}[FF/xy]\models \psi(x,y)$; 2a) for any function $F'>F$, $M, \{\emptyset\}[FF'/xy]\not\models \psi(x,y)$ (let $c\in C_1\setminus \{a^M\}$ and $d\in F'(\emptyset)\setminus F(\emptyset)$; the singleton subteam $\{(x,c),(y,d)\}$ does not satisfy $\psi(x,y)$, and so by downward closure neither does $\{\emptyset\}[FF'/xy]$);  1b) $M, \{\emptyset\}[F/x]\models \exists_{=\omega}x\psi(x,y)$ (by 1a and 2a); 2b) if $F'>F$, $M, \{\emptyset\}[F'/x]\not\models \exists_{=\omega}x\psi(x,y)$ (by a similar argument as for 2a) ). Thus,  $M\models\exists_{=\omega}x(\exists y_{=\omega}/\{x\})\psi(x,y)$. Notice however that also the function $G(\emptyset):=C_2$ (which picks uncountably many elements) is such that $M, \{\emptyset\}[G/x]\models (\exists y_{=\omega}/\{x\})\psi(x,y)$ (by similar arguments).
\end{example}

The example reveals that the quantifier $\exists_{=\omega}$, interpreted according to $\models^b$, respects the team-theoretical intuition that ``there is a countable set'' which satisfies the subformula $(\exists y_{=\omega}/\{x\})\psi(x,y)$; but not the first-order intuition that ''there are countably many elements'' (the same formula can be satisfied using uncountably many elements). If we eliminate the restriction $F'\geq F$ in condition 2), and redefine the bounded lift as
\[
\hat B'(Q)^{M,X} := \{\textgoth F \in \wp(\FU{X}{\wp(M)}) \ | \
\textgoth{F}\neq \emptyset  \ \land \, \forall F'(F'\in \textgoth{F} \Rightarrow F'\in Q^M)\}. 
\] 
we obtain a notion of quantifier that comes closer to first-order intuitions, but is more questionable from a team-theoretical perspective. The quantifier $\hat\exists_{=3,nm} = \hat B'(\exists_{=3})$, considered in example \ref{EXTHREE}, has this form.

Finally, we contrast the behaviour of the semantical clause considered in this section with Engstr\"om's earlier one under the aspect of permutation of quantifiers. Engstr\"om showed that, under $\models^b$, generalized quantifiers may fail to abide to some reasonable swapping rule, such as the equivalence of $Qu(Q'v/\{u\})$ with $Q'v(Qu/\{v\})$. Instead, in the Appendix we show that, save for some trivial quantifiers, the permutation rules work correctly for quantifiers evaluated according to Engstr\"om's earlier clause, even in the nonmonotone case.




\section{Conclusions}

One of the main points which emerged from our analysis of Engstr\"om's definitions of generalized quantifiers (his first definition, restricted to monotone quantifiers; and the second clause for non-monotone ones) is that these must be taken as a clever idea to embed the \emph{first-order} generalized quantifiers of Mostowski into team semantics; but the correctness of this embedding is limited to quantifiers applied to flat formulas. Since the typical logics based on team semantics are not flat, the appropriateness of these quantifiers for logics of imperfect information is difficult to assess.

Considerations over the higher-order nature of team semantics lead us to conjecture that an appropriate notion of generalized quantifier for team semantics should use semantical objects which are of higher order than those involved in the semantics of the Mostowski quantifiers. We proposed such a definition of ``team quantifier'', which treats each quantifier as a set of sets of functions; this definition includes, as special cases, the two definitions of Engstr\"om and the Most quantifier of \cite{DurEbbKonVol2011}. The identification of Engstr\"om quantifiers with team quantifier is performed via two operators that we called, respectively, the existential lift $\hat E$ and the bounded lift $\hat B$. 

Importantly, the existential lift is correct also in case it is applied to a quantifier which is (from a first-order perspective) nonmonotone. Engstr\"om instead stated that his first definition is not applicable to nonmonotone quantifiers. Many of the arguments in this paper point to the fact that this restriction is artificial, and induced by the mistake of treating a quantifier $\hat E(Q)$, which is higher-order in content, as if it were a first-order quantifier. The quantifiers $\hat B(Q)$ have been similarly misinterpreted.

We sketched some basic observations on team quantifiers, such as some discussion of their logicality. One obvious disadvantage of our definition is that it operates quite high in the type hierarchy (e.g. a team quantifier is a fourth-order object), but still we think it is somewhat natural. This claim is supported, first, by the arguments that lead us to this definition by analogy with the semantics of Mostowski quantifiers; and secondly, by our observation that, for sentence-initial quantifiers, our definition collapses to the more well-studied notion of second-order quantifier, which treats the meaning of a $1$-variable formula as a set of sets -- exactly as team semantics does, through the identification of a $1$-variable team with a subset of the domain of discourse.

\begin{center}
* * *
\end{center}

\appendix
\section{Appendix: equivalence rules for $IF^*(\mathcal R)$}



In section \ref{REA} we argued that there is no reason to restrict Engstr\"om's first definition of generalized quantifier to the monotone case. We further defend this idea by showing that some good logical properties (in particular, prenex transformations and the primality test) apply to extensions $IF(\mathcal R)$ without regard for the monotonicity, or lack thereof, of the quantifiers in $\mathcal R$. 

It is sometimes easier to study the proof-theoretical aspects of $IF$ logic if one considers a more general syntax  which also allows slashed connectives. We follow the convention of \cite{CaiDecJan2009} in calling this system $IF^*$ logic. The syntax is obtained by replacing the clauses for conjunction and disjunction of $IF$ logic with the clauses $\psi \land_{/W}\chi$ and $\psi \lor_{/W}\chi$ (where $W$ is a finite set of variables, $\psi,\chi$ $IF^*$ formulas). All the other syntactical clauses must also be extended to apply to $\psi,\chi$ $IF^*$ formulas. The definition of the set of free variables of an $IF^*$ formula requires the additional clauses: $FV(\psi \land_{/W}\chi) = FV(\psi \lor_{/W}\chi)= FV(\psi)\cup FV(\chi)\cup W$. Finally, it is necessary to add two clauses to the semantics. We say that a subset $Y$ of a team $X$ is \textbf{$W$-uniform} in $X$ if $s\in Y$, $s'\in X$ and $s\sim_W s'$ imply that $s'\in Y$. Then: 

\begin{itemize}
\item $M,X\models \chi_1\land_{/W}\chi_2$ if $M,X\models\chi_1$ and $M,X\models\chi_2$  
\item $M,X\models \chi_1\lor_{/W}\chi_2$ if there are $Y,Z$ that are $W$-uniform subsets of $X$, and such that $Y\cup Z=X$, $M,Y\models \chi_1$, and $M,Z\models \chi_2$.
\end{itemize}  

The $IF^*$ language is a conservative extension of the $IF$ language, provided one identifies $\land_{/\emptyset}$ with $\land$ and $\lor_{/\emptyset}$ with $\lor$. Generalized quantifier extensions $IF^*(\mathcal R)$ are defined as in the case of $IF$ logic (section \ref{SECENG}). Remember that $\mathcal R$ denotes a set of quantifiers distinct from $\forall$ and $\exists$.

 In the following, we will prove the validity of several equivalence rules of $IF^*(\mathcal R)$; the reader can infer from each of them a corresponding rule for $IF(\mathcal R)$. Our proofs will follow the model of \cite{CaiDecJan2009}, although now we must take care of the fact that functions from a team to $\wp(M)$ are considered; and of the strange things that may happen if some function has the empty set as one of its values. (In the monotone case, the only quantifier which is affected by this exception is the trivial quantifier $T^M = \wp(\wp(M))$).

\subsection*{Further notational conventions}

We list some additional notational conventions that will be used in this appendix. Most of them are borrowed from \cite{CaiDecJan2009}.
\begin{itemize}
\item We omit union symbols in syntactical expressions, e.g. we write $W\cup\{v\}$ as $Wv$. 
\item We write $\psi_{/V}$ for the $IF^*$ formula which  is obtained from $\psi$ by adding the variables of $V$ to each of the slash sets (including the slash sets of disjunctions). If $V=\{v\}$, we simply write $\psi_{/v}$.
\item Similarly,  we write $\psi|_{V}$ (resp. $\psi|_{v}$) for the $IF^*$ formula which  is obtained from $\psi$ by adding the variables of $V$ (resp. the single variable $v$) to each of the \emph{nonempty} slash sets (including the slash sets of disjunctions).
\item We denote by $s[z/x]$ the assignment which is obtained replacing each element of the form $(x,a)$ with an element $(z,a)$.
\item We denote as $X_{[z/x]}$ the team $\{s[z/x] | s\in X\}$.
\item We denote by $\psi[z/x]$ the formula obtained from $\psi$ by replacing each free occurrence of $x$ with $z$. 
\item If $\chi$ is an occurrence of a subformula of $\psi$, we denote as $\psi(\theta/\chi)$ the formula obtained replacing $\chi$ with $\theta$ in $\psi$.
\item A team $X_v$ is called a \textbf{$v$-expansion} of team $X$ if $v\notin dom(X)$, $dom(X_v) = dom(X) \cup \{v\}$ and $(X_v)_{\restriction dom(X)}=X$.
\end{itemize}
We will state inference rules in terms of \textbf{$Z$-equivalence}\footnote{This notion of equivalence must not be confused with the alternative approach of ``relative equivalence'' pursued in \cite{ManSanSev2011}, or with the special case of ``safe equivalence'' of \cite{Dec2005}.}: given a finite set of variables $Z$, two $IF^*$ formulas $\psi$ and $\chi$ are said to be $Z$-equivalent, in symbols $\psi \equiv_{Z}\chi$, if:
\begin{quote}
1) $(FV(\psi)\cup FV(\chi))\cap Z = \emptyset$ \\
2) For all teams $X$ such that $dom(X)\cap Z = \emptyset$, and for any structure $M$, $M,X\models \psi \Leftrightarrow M,X\models \chi$.\footnote{Actually, the notion of $Z$-equivalence in \cite{CaiDecJan2009} also contains a requirement about negative satisfiability. Since we have not introduced negative satisfiability in this paper, we will ignore this aspect.}
\end{quote} 
The case with $Z=\emptyset$ is the usual notion of truth-equivalence.


\subsection*{Prenex form theorem}

We move towards a prenex form result. First of all, we need a rule for the extraction of quantifiers in $IF(\mathcal R)$. We can reuse the proof scheme of the analogous result for $IF^*$ from \cite{CaiDecJan2009}; but since that proof is based on many intermediate results, we need to check carefully that the lemmas generalize to our case.

\begin{lem} \label{EXPSIM}
 \citep[Lemma 7.4]{CaiDecJan2009} Let $V$ be a set of variables, and $v$ a variable not in $V$, $s,t$ assignments of domain $V$ and codomain $M$, and $W \subseteq V$. Then
\[
    s\sim_{W}t \Leftrightarrow s(a/v)\sim_{Wv}t(b/v)
\]
for all $a,b\in M$.
\end{lem}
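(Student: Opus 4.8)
The plan is to prove the biconditional by establishing the two directions separately, where the reverse direction is essentially immediate and the forward direction requires unwinding the definition of $\sim_{Wv}$ carefully.

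Let me set up the notation. Recall that $s \sim_W t$ means $s(x) = t(x)$ for all variables $x \in dom(s) \setminus W$. Here $s,t$ have domain $V$, so $s \sim_W t$ says $s$ and $t$ agree on $V \setminus W$. On the right, $s(a/v)$ and $t(b/v)$ are assignments of domain $Vv = V \cup \{v\}$, and $s(a/v) \sim_{Wv} t(b/v)$ says these two agree on $(Vv) \setminus (Wv) = (V \cup \{v\}) \setminus (W \cup \{v\})$.

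Let me think about this more carefully. The key is to compute the domain on which agreement is required in each case. On the left, agreement is required on $V \setminus W$. On the right, agreement is required on $(V \cup \{v\}) \setminus (W \cup \{v\})$. Since $v \notin V$ and $W \subseteq V$ (so $v \notin W$), we have $(V \cup \{v\}) \setminus (W \cup \{v\}) = V \setminus W$ — the variable $v$ is removed by the slash set $Wv$ regardless of whether it was added. So the set of variables on which agreement is tested is **identical** on both sides, namely $V \setminus W$.

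Now the argument is a direct comparison. For any variable $x \in V \setminus W$ (note $x \neq v$ since $v \notin V$), we have $(s(a/v))(x) = s(x)$ and $(t(b/v))(x) = t(x)$ by definition of the modified assignment. Hence $s(a/v)$ and $t(b/v)$ agree on $V \setminus W$ if and only if $s$ and $t$ agree on $V \setminus W$. This gives the biconditional, and crucially it holds for all $a,b \in M$ since the values assigned to $v$ play no role (the variable $v$ is excluded from the comparison on the right).

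\smallskip

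\noindent The plan is therefore as follows. First I would compute explicitly that $(V \cup \{v\}) \setminus (W \cup \{v\}) = V \setminus W$, using the hypotheses $v \notin V$ and $W \subseteq V$ to ensure $v \notin W$. Then I would observe that for each $x$ in this common set, $x \neq v$, so modifying the assignments at $v$ does not affect their value at $x$: $(s(a/v))(x) = s(x)$ and $(t(b/v))(x) = t(x)$. The biconditional then reduces to the tautology that $s,t$ agree on $V \setminus W$ iff $s(a/v),t(b/v)$ agree on $V \setminus W$.

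\smallskip

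\noindent I do not anticipate a genuine obstacle here; this is a bookkeeping lemma. The only point requiring mild care is confirming that $v$ is genuinely excluded from the agreement condition on the right-hand side — this is exactly what makes the statement hold \emph{for all} $a, b \in M$, rather than only when $a = b$. Had the slash set on the right been $W$ instead of $Wv$, the equivalence would fail unless $a = b$; the presence of $v$ in the slash set is precisely what neutralizes the choice of values at $v$.
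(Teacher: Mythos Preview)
Your proof is correct. The paper does not actually supply its own proof of this lemma; it merely cites it as Lemma 7.4 of \cite{CaiDecJan2009} and uses it as a black box. Your direct verification---computing that $(V\cup\{v\})\setminus(W\cup\{v\}) = V\setminus W$ and observing that the modified assignments agree with the originals on this set---is exactly the straightforward bookkeeping one expects, and nothing more is needed.
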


\begin{lem}\label{SED}
Let $\varphi$ be an $IF^*(\mathcal R)$ formula, $X$ a team and $v$ a variable not occurring in $\varphi$ nor in $dom(X)$. Then, for any $v$-expansion $X_v$ of $X$,
\[
M,X\models\varphi \Longleftrightarrow M,X_v\models \varphi_{/v}.
\]
\end{lem}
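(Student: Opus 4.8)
The plan is to prove the equivalence by induction on the structure of $\varphi$, carrying along the observation that each team-forming operation commutes with $v$-expansion: if $X_v$ is a $v$-expansion of $X$, then the duplicated and supplemented teams built from $X_v$ are themselves $v$-expansions of the corresponding teams built from $X$. First I would record that for any $s', s'' \in X_v$ one has $s' \sim_{Wv} s''$ iff $s'_{\restriction dom(X)} \sim_W s''_{\restriction dom(X)}$ (this is essentially Lemma \ref{EXPSIM}, using that $v \notin dom(X)$), so that $W$-uniformity of subteams and functions on $X$ corresponds exactly to $Wv$-uniformity of their $v$-expanded counterparts on $X_v$. The base case ($\varphi$ a possibly negated atom) is immediate, since $\varphi_{/v} = \varphi$ and the truth of $\varphi$ depends only on the values of variables in $dom(X)$, on which $X_v$ and $X$ agree. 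The conjunction and universal cases are equally direct: the semantic clauses for $\land_{/W}$ and $(\forall u/U)$ ignore their slash sets, so after checking that $X_v[M/u]$ is a $v$-expansion of $X[M/u]$ (here $u \neq v$, since $v$ does not occur in $\varphi$), the claim follows from the induction hypothesis.

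The disjunction and quantifier cases carry the real content. For $\varphi = \chi_1 \lor_{/W} \chi_2$, in the forward direction I would take a splitting $X = Y \cup Z$ into $W$-uniform subteams and lift it to the splitting $X_v = Y_v \cup Z_v$, where $Y_v := \{s' \in X_v \mid s'_{\restriction dom(X)} \in Y\}$ and $Z_v$ is defined analogously; the equivalence above shows $Y_v, Z_v$ are $Wv$-uniform in $X_v$ and are $v$-expansions of $Y, Z$, so the induction hypothesis applies. In the backward direction I project a given $Wv$-uniform splitting of $X_v$ down to $X$ by restriction to $dom(X)$; the same equivalence shows the projected subteams are $W$-uniform, and each piece of the splitting is a $v$-expansion of its projection, so the induction hypothesis again closes the case. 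The existential and generalized-quantifier cases are then handled uniformly, since the clause for $(Ru/U)$ differs from that for $(\exists u/U)$ only in replacing the codomain $\wp(M) \setminus \{\emptyset\}$ by $R^M$, and the constructions below preserve whichever codomain condition is imposed.

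For $\varphi = (Ru/U)\chi$ (with $u \neq v$), the forward direction is routine: given a $U$-uniform $F \colon X \to R^M$ witnessing $M,X \models \varphi$, I set $F_v(s') := F(s'_{\restriction dom(X)})$, which is $Uv$-uniform with the same codomain, and check that $X_v[F_v/u]$ is a $v$-expansion of $X[F/u]$, so the induction hypothesis yields $M,X_v \models \varphi_{/v}$. The step I expect to be the main obstacle is the backward direction here: from a $Uv$-uniform $G \colon X_v \to R^M$ witnessing $M,X_v \models \varphi_{/v}$, I must recover a function on $X$. The key point is that because $v$ now belongs to the slash set $Uv$, any two elements of $X_v$ with the same restriction to $dom(X)$ are $Uv$-equivalent, so $G$ takes a single value on each such fiber and therefore descends to a well-defined $F \colon X \to R^M$ with $G(s') = F(s'_{\restriction dom(X)})$; the equivalence between $\sim_U$ and $\sim_{Uv}$ then gives $U$-uniformity of $F$. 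Verifying that $X_v[G/u]$ is a $v$-expansion of $X[F/u]$ and invoking the induction hypothesis completes the proof. Throughout, I would note that no monotonicity or nonemptiness hypothesis on $R$ is used, since the constructed functions merely copy the values of the given ones, so the codomain condition $R^M$ (which may even contain $\emptyset$) is transported automatically in both directions.
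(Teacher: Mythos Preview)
Your proposal is correct and follows essentially the same approach the paper indicates: a structural induction in which the generalized-quantifier case is handled exactly as the existential case, exploiting that the added slash variable $v$ makes the supplementing function constant on each fiber over $X$. The paper itself gives only a one-line sketch (deferring the detailed case analysis to \cite{Dec2005} and remarking that the existential case serves as a model for the intermediate-quantifier case), so your write-up in fact supplies the argument the paper leaves implicit; in particular, your observation that $Uv$-uniformity forces $G$ to descend to $X$ in the backward direction, and that no monotonicity or nonemptiness hypothesis on $R$ is needed, is precisely the point the paper wants the reader to extract from the existential template.
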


\begin{proof}
This can be proved by induction on the structure of $\varphi$. See \cite{Dec2005}, 5.5 for an exhaustive treatment of cases, including slashed disjunctions. The proof of the existential case can be taken as a model for the intermediate quantifier case.
\end{proof}

The following is the extraction rule we need. Remember that we use the letter $R$ to denote quantifiers distinct from $\forall$ and $\exists$ (``intermediate'' quantifiers).

\begin{lem}\label{WEAKEXT}
For any formulas $\psi,\chi$ of $IF^*(R)$, any variable $v$ not occurring in $\chi$, $V$ nor $W$, and $Q$ being either $\forall,\exists$ or an intermediate $R$,
\[
(Qv/V)\psi \lor_{/W} \chi \equiv_v (Qv/V)(\psi \lor_{/Wv} \chi_{/v})
\]
\end{lem}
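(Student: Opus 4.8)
The plan is to prove the $Z$-equivalence ($Z = \{v\}$) by verifying the two conditions in the definition of $\equiv_v$. Condition 1, that $v$ is free in neither side, is immediate from the free-variable clauses: on the left, $v$ is bound by $(Qv/V)$ and does not occur in $\chi$, so $v\notin FV((Qv/V)\psi\lor_{/W}\chi)$; on the right, the added slashes and the subscripting $\chi_{/v}$ do not introduce $v$ as a free variable either, since $v$ is again captured by $(Qv/V)$. So the work is entirely in condition 2: for every team $X$ with $dom(X)\cap\{v\}=\emptyset$ (i.e.\ $v\notin dom(X)$) and every structure $M$, I must show
\[
M,X\models (Qv/V)\psi \lor_{/W} \chi \iff M,X\models (Qv/V)(\psi \lor_{/Wv} \chi_{/v}).
\]

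First I would unwind both sides semantically. The left-hand side, by the slashed-disjunction clause, holds iff there are $W$-uniform subteams $Y,Z$ of $X$ with $Y\cup Z=X$, $M,Y\models(Qv/V)\psi$ and $M,Z\models\chi$. The right-hand side, by the quantifier clause, holds iff there is a $V$-uniform $F:X\to Q^M$ (I write $Q^M$ generically; for $Q=\forall,\exists$ read the corresponding team operation, which the lax clause subsumes) with $M,X[F/v]\models\psi\lor_{/Wv}\chi_{/v}$, and unwinding the disjunction again, iff $X[F/v]$ splits into $Wv$-uniform subteams $Y',Z'$ satisfying $\psi$ and $\chi_{/v}$ respectively. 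The key structural fact I would exploit is that $v\notin dom(X)$, so every assignment in $X[F/v]$ is of the form $s(a/v)$ with $s\in X$; this lets me pass between subteams of $X[F/v]$ and subteams of $X$.

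For the forward direction, given the splitting $Y,Z$ of $X$ and the $V$-uniform witness $G:Y\to Q^M$ for $M,Y\models(Qv/V)\psi$, I would extend $G$ to a $V$-uniform $F:X\to Q^M$ on all of $X$ (defining $F$ arbitrarily, say as a constant nonempty value, on $X\setminus Y$; $V$-uniformity is preserved because $v\notin V$ and the extension is harmless). Then I set $Y' := Y[G/v]$ and $Z' := \{s(a/v)\mid s\in Z, a\in F(s)\}$, check that these are $Wv$-uniform (using Lemma~\ref{EXPSIM} to lift $W$-uniformity of $Y,Z$ through the $v$-expansion to $Wv$-uniformity), that they cover $X[F/v]$, that $M,Y'\models\psi$, and that $M,Z'\models\chi_{/v}$ via Lemma~\ref{SED} (since $v\notin FV(\chi)$, $\chi_{/v}$ is satisfied by the $v$-expansion $Z'$ exactly when $\chi$ is satisfied by $Z$). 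For the converse, starting from $F$ and the splitting $Y',Z'$ of $X[F/v]$, I would project back by setting $Y := (Y')_{-v}$ and $Z := (Z')_{-v}$, recovering a $W$-uniform splitting of $X$ (again via Lemma~\ref{EXPSIM}, reading the equivalence right-to-left), with $M,Y\models(Qv/V)\psi$ witnessed by the restriction of $F$ to $Y$, and $M,Z\models\chi$ by Lemma~\ref{SED}. The main obstacle I anticipate is the bookkeeping at the empty-value boundary: a $V$-uniform $F$ into $Q^M$ may assign $\emptyset$ to some assignments (for nontrivial $Q$ this is genuinely possible), so when I project subteams back down along $X[F/v]\to X$ I must be careful that assignments killed by $F(s)=\emptyset$ do not break the covering condition $Y\cup Z=X$ — this is precisely the ``strange things that may happen if some function has the empty set as one of its values'' flagged before the lemma, and handling it correctly (rather than assuming $F$ lands in $\wp(M)\setminus\{\emptyset\}$, which holds for $\forall,\exists$ but not for a general intermediate $R$) is where the generalization beyond the standard $IF^*$ proof of \cite{CaiDecJan2009} demands care.
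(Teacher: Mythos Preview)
Your overall plan is the paper's, but two steps do not go through as written.

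\emph{Forward direction.} The claim that extending $G$ to $F$ on all of $X$ ``as a constant nonempty value'' preserves $V$-uniformity ``because $v\notin V$'' is incorrect. Since $v\notin dom(X)$ anyway, the fact that $v\notin V$ tells you nothing about $V$-uniformity of a function defined on $X$; the actual obstruction is that some $s'\in X\setminus Y$ may be $\sim_V$-related to an $s\in Y$, and then your constant default forces $F(s')\neq G(s)=F(s)$. The fix is to extend $G$ along $\sim_V$-classes: for $s'\in X\setminus Y$, set $F(s'):=G(s)$ whenever such an $s\in Y$ exists (well-defined since $G$ is $V$-uniform), and on the remaining classes pick a fixed element of $Q^M$ --- not merely a ``nonempty value'', which need not lie in $Q^M$ for a general intermediate $R$.

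\emph{Backward direction.} You set $Y:=(Y')_{-v}$ and assert that the restriction of $F$ to $Y$ witnesses $M,Y\models(Qv/V)\psi$. That requires $Y[F/v]=Y'$, but projecting alone only gives $Y'\subseteq Y[F/v]$, and from $M,Y'\models\psi$ you cannot conclude $M,Y[F/v]\models\psi$ for a possibly larger team (downward closure goes the wrong way). The missing step --- which is exactly the key observation in the paper's proof --- is that $Wv$-uniformity of $Y'$ in $X[F/v]$ already entails $\{v\}$-uniformity: if $s(a/v)\in Y'$ for one $a\in F(s)$ then $s(a/v)\sim_{\{v\}} s(b/v)$ implies $s(b/v)\in Y'$ for every $b\in F(s)$. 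This is what forces each $Wv$-uniform piece of $X[F/v]$ to be of the form $X_i[F/v]$ for some $X_i\subseteq X$, and hence makes the restricted $F$ a legitimate witness. Your appeal to Lemma~\ref{EXPSIM} handles the transfer of $W$-uniformity between $X$ and $X[F/v]$, but it does not supply this $v$-saturation; that is a separate (easy, but essential) argument you have not stated.
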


\begin{proof}
For the cases $Q=\forall,\exists$ see the proof of Theorem 7.5 in \cite{CaiDecJan2009}. We examine the case $Q=R$. The requirements that $v\notin V\cup W$ and that $v$ does not occur in $\chi$ ensure that the non-triviality condition for $v$-equivalence is respected. 

$\Rightarrow$) This part of the  proof does not differ significantly from the corresponding existential case in \cite{CaiDecJan2009}, Theorem 7.5, so we omit it.


$\Leftarrow$) Suppose $M,X\models (Rv/V)(\psi \lor_{/Wv} \chi_{/v})$ for some team $X$ such that $v\notin dom(X)$. Then $M,Y_1\models\psi$ and $M,Y_2\models \chi_{/v}$, where the $Y_i$ are $Wv$-uniform and  $Y_1\cup Y_2 = X[F/v]$ for some $V$-uniform function $F:X\rightarrow R^M$. Now let $s\in Y_i, s'\in X[F/v]$ and $s\sim_v s'$. This obviously implies $s\sim_{Wv} s'$. So, by $Wv$-uniformity of $Y_i$, $s'\in Y_i$. Consequently, $Y_i=X_i[F/v]$ for some $X_i\subseteq X$.
Thus $M,X_1[F/v]\models \psi$ (from which it follows that $M,X_1\models (Rv/V)\psi$) and, thanks to Lemma \ref{SED} (which is applicable since $v\notin dom(X)$), $M,X_2\models\chi$. Clearly $X_1\cup X_2=X$, otherwise the $Y_i$ would not cover $X[F/v]$. We check that the $X_i$ are $W$-uniform. Suppose $s\in X_i, t\in X,s\sim_{W}t$. Then, by lemma \ref{EXPSIM},  $s(F(s)/v)\sim_{Wv}t(F(t)/v)$; so, by $Wv$-uniformity of $Y_i$, $t(F(t)/v)\in Y_i$. So, $t\in X_i$. We may conclude that $M,X\models(Rv/V)\psi \lor_{/W} \chi$. 
\end{proof}

The following basic result holds as usual:

\begin{lem}[Interchanging free variables]        \label{INTERCHANGING}
If $x\notin Bound(\psi)$ and $z$ does not occur in $\psi$, then for any structure $M$ and any suitable team $X$ such that $x\in dom(X)$ and $z\notin dom(X)$,
\[
M,X\models \psi \Longleftrightarrow M, X_{[z/x]} \models \psi[z/x].
\]

\end{lem}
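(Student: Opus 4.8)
The plan is to prove both implications at once by induction on the structure of $\psi$, following the familiar pattern for substitution lemmas but taking care of the slashed operators. The hypothesis $x\notin Bound(\psi)$ guarantees that every occurrence of $x$ in $\psi$ -- including the occurrences hidden inside slash sets -- is free, so that $\psi[z/x]$ really renames all of them simultaneously; and since $z$ does not occur in $\psi$, no variable is captured. The backbone of the argument is a single auxiliary observation that I would establish before the induction: since $x\in dom(X)$ and $z\notin dom(X)$, the map $s\mapsto s[z/x]$ is a bijection from $X$ onto $X_{[z/x]}$, and it respects $V$-equivalence in the following sense. For any finite set of variables $U$ with $z\notin U$ and any assignments $s,t$ of domain $dom(X)$,
\[
s\sim_{U} t \iff s[z/x]\sim_{U[z/x]} t[z/x],
\]
where $U[z/x]$ denotes $U$ with $x$ replaced by $z$ (if present). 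This is a short case distinction on whether $x\in U$: deleting $x$ from the domain and adjoining $z$ exactly mirrors the analogous move inside $U$. Since every slash set occurring in $\psi$ avoids the fresh variable $z$, this observation transfers $V$-uniformity of functions and $W$-uniformity of subteams across the renaming.

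For the base case (literals: atoms, negated atoms, and (in)equalities) I would argue assignment by assignment. By the usual Tarskian substitution lemma for terms, $M,s\models\alpha \iff M,s[z/x]\models\alpha[z/x]$, and team satisfaction of a literal is just the conjunction of these conditions over $s\in X$; the bijection above then delivers the equivalence. The cases $\chi_1\land_{/W}\chi_2$ and $\chi_1\lor_{/W}\chi_2$ are handled by splitting: given a decomposition $X=Y\cup Z$ into $W$-uniform subsets witnessing the left-hand side, I pass to $Y_{[z/x]},Z_{[z/x]}$, which are $W[z/x]$-uniform subsets of $X_{[z/x]}$ with union $X_{[z/x]}$ by the transfer of uniformity, and conclude by the induction hypothesis applied to $\chi_1,\chi_2$ (the domain and freshness side-conditions are inherited, since $dom(Y)=dom(Z)=dom(X)$). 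The converse direction pulls decompositions back through the same bijection.

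The main case, and the only place where some computation is unavoidable, is the quantifier $\psi=(Qv/V)\chi$, which covers $Q=\forall,\exists$ and an intermediate $R$ uniformly. Here $x\notin Bound(\psi)$ forces $x\neq v$ and $x\notin Bound(\chi)$, while $z$ fresh forces $z\neq v$ and $z$ absent from $\chi$ and $V$; hence $\psi[z/x]=(Qv/V[z/x])\chi[z/x]$. Given a $V$-uniform witness $F:X\rightarrow Q^M$ with $M,X[F/v]\models\chi$, I transport it along the bijection by setting $F'(s[z/x]):=F(s)$, so $F':X_{[z/x]}\rightarrow Q^M$. Two facts must be checked: that $F'$ is $V[z/x]$-uniform, which is immediate from the equivalence displayed above; and that the supplemented teams correspond,
\[
(X[F/v])_{[z/x]}=X_{[z/x]}[F'/v],
\]
which holds because $v\notin\{x,z\}$ gives $(s(a/v))[z/x]=(s[z/x])(a/v)$, so the two set-builder descriptions coincide (for $Q=\forall$ the same identity with $F$ the constant function $dom(M)$ yields $(X[M/v])_{[z/x]}=X_{[z/x]}[M/v]$). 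Applying the induction hypothesis to $\chi$ on the team $X[F/v]$ -- legitimate since $x\in dom(X[F/v])$, $z\notin dom(X[F/v])$, $x\notin Bound(\chi)$, and $z$ is absent from $\chi$ -- then gives $M,X_{[z/x]}[F'/v]\models\chi[z/x]$, i.e.\ $M,X_{[z/x]}\models\psi[z/x]$; the converse is symmetric, transporting the witness back through the inverse bijection. I expect the main obstacle to be purely bookkeeping: keeping track of which slash sets contain $x$, and confirming at each step that the freshness of $z$ and the side-condition $x\neq v$ keep the renaming compatible with the duplicated and supplemented team operations.
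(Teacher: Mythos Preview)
The paper does not give a proof of this lemma at all; it is stated as a basic result and left to the reader. Your proposal supplies exactly the standard structural-induction argument one would expect, and it is correct: the bijection $s\mapsto s[z/x]$, the transfer of $\sim_U$-equivalence to $\sim_{U[z/x]}$-equivalence, and the computation $(X[F/v])_{[z/x]}=X_{[z/x]}[F'/v]$ are precisely the right ingredients, and you correctly track that the freshness of $z$ and the hypothesis $x\notin Bound(\psi)$ keep the substitution well-behaved in slash sets. One cosmetic point: you describe the conjunction case as ``handled by splitting'', but of course $\land_{/W}$ has no splitting in its semantics; the inductive step there is immediate from applying the hypothesis to $\chi_1$ and $\chi_2$ on the full team.
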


\begin{thm} \label{RENAMEVAR}
Let $z$ be a variable not occurring in $(Qx/X)\psi$, where $Q$ is either $\forall,\exists$ or $R_i$. Then:\\
a) If $x$ does not occur bound in $\psi$ and does not occur in $X$, then
\[
     (Qx/X)\psi \equiv_{xz} (Qz/X)\psi[z/x]
\]
b)  If $x$ does not occur bound in $\psi$, then
\[
     (Qx/X)\psi \equiv_{z} (Qz/X)(\psi[z/x]_{/\{x\}})
\]
\end{thm}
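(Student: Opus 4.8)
The plan is to check, for each of parts a) and b), the two requirements in the definition of $Z$-equivalence: the free-variable condition (clause 1) and the satisfaction condition (clause 2), for all teams $Y$ with $dom(Y)\cap Z=\emptyset$. I would treat in detail only the intermediate-quantifier case $Q=R_i$, since for $Q=\forall,\exists$ this is the standard renaming result and can be imported from \cite{CaiDecJan2009}; the real work is done by Lemma \ref{INTERCHANGING} and Lemma \ref{SED}, so I also suppress the routine induction through slashed connectives.

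For the free-variable clause I would compute, from the inductive definition of $FV$, that $FV((Qx/X)\psi)=(FV(\psi)\setminus\{x\})\cup X$. In part a), $x\notin X$ and $z$ does not occur in $(Qx/X)\psi$, so neither $x$ nor $z$ is free in either formula, which is clause 1 for $Z=\{x,z\}$. In part b) the variable $x$ may belong to the slash set $X$ and thus be genuinely free, so there I only need to verify that $z$ is absent; this holds because $z$ is fresh, the outer $(Qz/X)$ binds $z$, and the decoration $_{/\{x\}}$ reintroduces only $x$ (never $z$) into the slash sets. Here I would record that $x\notin Bound(\psi)$ is exactly what guarantees that the occurrences of $x$ created by $_{/\{x\}}$ (and, dually, destroyed by the substitution $[z/x]$) are genuinely free occurrences.

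For the satisfaction clause I would unfold the outermost quantifier. Fix $Y$ with $dom(Y)\cap Z=\emptyset$ and an $X$-uniform $F:Y\to R^M$ witnessing $M,Y\models(Rx/X)\psi$, i.e. $M,Y[F/x]\models\psi$; I would use the \emph{same} $F$ as witness on the right-hand side (its $X$-uniformity is a property of the domain $Y$ and is unaffected by which variable is supplemented, and reusing $F$ sidesteps any issue with empty-valued functions, since the same assignments are dropped on both sides). In part a) we have $x\notin dom(Y)$, so $(Y[F/x])_{[z/x]}=Y[F/z]$; applying Lemma \ref{INTERCHANGING} (legitimate as $x\notin Bound(\psi)$ and $z$ is fresh) yields $M,Y[F/x]\models\psi \iff M,Y[F/z]\models\psi[z/x]$. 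Since the lemma is biconditional and the witness correspondence $F\mapsto F$ is a bijection, both directions follow at once, settling part a).

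The delicate case is part b), where $x$ is a free passenger present in $dom(Y)$, so that $Y[F/x]$ \emph{overwrites} the original $x$-column whereas the right-hand side retains it and adjoins a fresh $z$-column. The plan is a two-step reduction. First, Lemma \ref{INTERCHANGING} rewrites $M,Y[F/x]\models\psi$ as $M,(Y[F/x])_{[z/x]}\models\psi[z/x]$. Second, I would establish the team identity $(Y[F/x])_{[z/x]}=(Y[F/z])_{\upharpoonright (dom(Y)\setminus\{x\})\cup\{z\}}$: both teams carry the value $F(s)$ on $z$ and agree on every variable other than $x$, the original $x$-value being discarded by overwriting on the left and by restriction on the right. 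Since $x$ does not occur in $\psi[z/x]$ and $Y[F/z]$ is an $x$-expansion of this restricted team, Lemma \ref{SED} with $v=x$ upgrades the last satisfaction to $M,Y[F/z]\models(\psi[z/x])_{/x}$, and $(\psi[z/x])_{/x}$ is precisely $\psi[z/x]_{/\{x\}}$. Chaining the two biconditionals gives $M,Y[F/x]\models\psi \iff M,Y[F/z]\models\psi[z/x]_{/\{x\}}$. I expect the main obstacle to lie exactly in this bookkeeping of part b): correctly matching the overwritten $x$-column on the left against the retained-$x$, new-$z$ configuration on the right, and recognising that the slashes added by $_{/\{x\}}$ are precisely what is needed to neutralise the passenger variable $x$ — which is the content of Lemma \ref{SED}.
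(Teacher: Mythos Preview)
Your proposal is correct and supplies precisely the details that the paper omits: the paper's own proof is the single sentence ``The proof of \cite{ManSanSev2011}, Theorem 5.37, applies almost without changes,'' so there is no in-paper argument to compare against beyond the deferral. Your route via Lemma~\ref{INTERCHANGING} for the variable swap and Lemma~\ref{SED} for neutralising the passenger column in part~b) is the standard machinery for this kind of renaming, and is almost certainly what the cited external proof does; the observation that one may reuse the very same witness function $F$ on both sides (so that the generalized-quantifier case $Q=R_i$ requires no new idea) is exactly the ``applies almost without changes'' content. One small remark: your phrase ``suppress the routine induction through slashed connectives'' is slightly off --- there is no induction here, just a single unfolding of the outermost quantifier; the inductive work is already packaged inside Lemmas~\ref{INTERCHANGING} and~\ref{SED}.
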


\begin{proof}
a-b) The proof of \cite{ManSanSev2011}, Theorem 5.37, applies almost without changes.\\

\end{proof}

The notions of \emph{regular}\footnote{Not to be confused with the homonym notion from \cite{ManSanSev2011}.} and \emph{strongly regular} formula from \cite{CaiDecJan2009} are also sensible in our context.

\begin{definition} An $IF^*(\mathcal R)$ formula $\psi$ is \textbf{regular} if:\\
1) No variable occurs both bound and free in $\psi$\\
2) No quantifier over a variable, say $v$, occurs in the scope of another quantifier over $v$.

An $IF^*(\mathcal R)$ formula $\psi$ is \textbf{strongly regular} if each variable is quantified at most once in it.
 
\end{definition}

\begin{lem} \label{REGULARSUBST}
Let $\varphi$ and $\varphi(\chi/\psi)$ be regular $IF^*(\mathcal R)$ formulas. If $\psi \equiv_V \chi$, then $\varphi \equiv_V \varphi(\chi/\psi)$
\end{lem}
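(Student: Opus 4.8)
The plan is to prove Lemma \ref{REGULARSUBST} by induction on the structure of $\varphi$, relative to the distinguished occurrence of the subformula $\psi$. Since $V$-equivalence $\equiv_V$ carries two requirements — disjointness of free variables from $V$, and satisfaction-equivalence over all teams whose domain avoids $V$ — I would verify both, but the free-variable clause is routine: $\varphi$ and $\varphi(\chi/\psi)$ have exactly the same free variables outside the replaced occurrence, while inside it $FV(\psi)$ and $FV(\chi)$ are disjoint from $V$ by the hypothesis $\psi\equiv_V\chi$; so condition (1) for the conclusion reduces to condition (1) of the hypothesis. The substance is therefore the semantic clause, proved by induction.

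Before the induction I would make a harmless normalization. The awkward configuration is a quantifier $(Qv/U)$ on the path to the occurrence whose bound variable $v$ happens to lie in $V$; this is \emph{not} excluded by regularity, since $v$ is bound and hence not free in $\varphi$, which is consistent with $FV(\varphi)\cap V=\emptyset$. To remove it I would invoke the renaming theorem (Theorem \ref{RENAMEVAR}) to rename every bound variable of $\varphi$ that lies in $V$ to a fresh variable outside $V$. Because $FV(\psi)$ and $FV(\chi)$ are disjoint from $V$, none of the renamed variables is free in the distinguished occurrence, so the occurrence is left untouched and the renaming commutes with the substitution of $\chi$ for $\psi$; regularity guarantees that this yields an equivalent regular formula. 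After this reduction I may assume that every bound variable of $\varphi$ lies outside $V$.

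With this in hand the induction is smooth. The base case $\varphi=\psi$ is exactly the hypothesis $\psi\equiv_V\chi$. For the connective cases $\varphi=\chi_1\land_{/W}\chi_2$ and $\varphi=\chi_1\lor_{/W}\chi_2$ (with the occurrence in, say, $\chi_1$), the point is that the semantic clauses build the relevant subteams insensitively to whether $\chi_1$ or $\chi_1(\chi/\psi)$ sits there: conjunction reduces to satisfaction of each conjunct on the same team $X$, and disjunction to the existence of a $W$-uniform cover $Y\cup Z=X$. Since any such $Y,Z$ have the same domain as $X$, hence a domain disjoint from $V$, the inductive hypothesis applied to $\chi_1$ converts a witnessing split for $\varphi$ into one for $\varphi(\chi/\psi)$ and back. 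For the quantifier case $\varphi=(Qv/U)\chi_1$ (with $Q$ equal to $\forall$, $\exists$, or an intermediate $R$), satisfaction is governed by $M,X[M/v]\models\chi_1$ (for $\forall$) or by the existence of a suitably uniform $F$ with codomain $\wp(M)\setminus\{\emptyset\}$, resp.\ $R^M$, such that $M,X[F/v]\models\chi_1$. The range of admissible $F$ is identical for $\varphi$ and $\varphi(\chi/\psi)$, so it suffices to compare $M,X[F/v]\models\chi_1$ with $M,X[F/v]\models\chi_1(\chi/\psi)$; since $dom(X[F/v])=dom(X)\cup\{v\}$ and $v\notin V$ by the normalization, this team's domain is disjoint from $V$ and the inductive hypothesis on $\chi_1$ applies directly.

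I expect the main obstacle to be precisely the quantifier step, and within it the clash between a bound variable and $V$; this is the only place where regularity does real work, and the only place where the non-local, slashed nature of $IF^*$ blocks a naive ``restrict the team to $FV(\psi)$'' shortcut. The renaming reduction above is the device that neutralizes it, but one must check that chaining the renaming equivalence, the inductive equivalence, and the inverse renaming respects the variable bookkeeping of $\equiv_V$. A secondary technical point, flagged already in the preamble to this appendix, is the behaviour of supplementing functions that may take the empty set as a value; since the \emph{same} functions $F$ are used on both sides of the equivalence this causes no genuine difficulty, but one should confirm that the limiting case $X[F/v]=\emptyset$ is handled, which follows from the empty team property (Theorem \ref{EMPTYSETR}).
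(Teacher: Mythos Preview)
Your inductive skeleton is sound and matches the route the paper points to (the proof of Theorem~9.6 in \cite{CaiDecJan2009}, adapted to $IF^*(\mathcal R)$). The gap is in your preliminary ``normalization'' step. Theorem~\ref{RENAMEVAR}, as stated, only renames the \emph{outermost} quantified variable of a formula of the form $(Qx/X)\psi$; it says nothing about inner quantifiers. To rename a bound variable buried inside $\varphi$ and then lift that local equivalence back to the whole formula you would need precisely a substitution-of-equivalents lemma---which is Lemma~\ref{REGULARSUBST} itself. (This is also why, in the paper's ordering, the strong regularization result Theorem~\ref{STRONGREG} is proved \emph{after} \ref{REGULARSUBST} and explicitly invokes both \ref{RENAMEVAR} and \ref{REGULARSUBST}.) So the normalization as you describe it is circular.

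The fix is to drop the global pre-processing and handle the clash inside the induction, at the quantifier step, on the \emph{team} side rather than the syntactic side. When $\varphi=(Qv/U)\chi_1$ with $v\in V$, pick a fresh $z$ and use Lemma~\ref{INTERCHANGING}: since $\varphi$ is regular, $v\notin Bound(\chi_1)$, and since $v\in V$ while $FV(\psi)\cap V=\emptyset$, the variable $v$ does not occur in $\psi$ at all; hence $\chi_1[z/v]$ still contains the occurrence of $\psi$ untouched, and $M,X[F/v]\models\chi_1$ iff $M,(X[F/v])_{[z/v]}\models\chi_1[z/v]$. The latter team has domain $dom(X)\cup\{z\}$, disjoint from $V$, so the inductive hypothesis applies to $\chi_1[z/v]$; then invert the interchanging. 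This is exactly why the paper's proof sketch lists Lemma~\ref{INTERCHANGING} among the lemmas that must be checked to carry over.
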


\begin{proof}
One can use the proof of Theorem 9.6 in \cite{CaiDecJan2009}, checking that the lemmas it depends on (6.14, 6.16, and our \ref{INTERCHANGING}) and their proofs are also valid for $IF^*(\mathcal R)$.
\end{proof}

\begin{thm}[Strong regularization]   \label{STRONGREG}
Every $IF(\mathcal R)$ (resp. $IF^*(\mathcal R)$) formula $\psi$ is $V$-equivalent to a strongly regular
$IF(\mathcal R)$ (resp. $IF^*(\mathcal R)$) formula $\psi'$, for some set of variables $V\subseteq Bound(\psi')$; $\psi'$ can be chosen so that $Bound(\psi)\cap Bound(\psi')= \emptyset$.
\end{thm}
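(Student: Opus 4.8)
The plan is to prove the Strong regularization theorem by induction on the syntax of $\psi$, using the two renaming results already at our disposal: the variable-renaming rule (Theorem \ref{RENAMEVAR}) and the substitution-under-$V$-equivalence lemma (Lemma \ref{REGULARSUBST}). The strategy is first to secure the weaker property of \emph{regularity} (no variable occurring both bound and free, and no quantifier over $v$ inside the scope of another quantifier over $v$), and then to upgrade regularity to strong regularity (each variable quantified at most once) by renaming apart any remaining repeated bound variables.

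\textbf{Setting up the induction.} First I would fix, once and for all, a supply of fresh variables disjoint from those occurring in $\psi$; this guarantees that the side conditions of Theorems \ref{RENAMEVAR} and \ref{REGULARSUBST} can always be met and lets us arrange $Bound(\psi)\cap Bound(\psi')=\emptyset$ at the end. The atomic and negated-atomic cases are trivial, since such formulas contain no quantifiers and are already strongly regular with $V=\emptyset$. For the connective cases $\psi=\chi_1 \star \chi_2$ (with $\star$ one of $\land,\lor$, or their slashed variants $\land_{/W},\lor_{/W}$), I would apply the induction hypothesis to obtain strongly regular $\chi_1',\chi_2'$, chosen so that their bound variables are mutually disjoint and disjoint from all free variables of $\psi$; then Lemma \ref{REGULARSUBST} lets me substitute $\chi_i'$ for $\chi_i$ inside $\psi$ while preserving $V$-equivalence, provided I check that the ambient formula remains regular after each substitution. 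The resulting formula $\psi'$ is strongly regular because the disjoint choice of bound-variable supplies prevents any variable from being quantified twice.

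\textbf{The quantifier case.} The substantive step is $\psi=(Qx/V_0)\chi$, where $Q$ is $\forall$, $\exists$, or an intermediate $R\in\mathcal R$. By induction, $\chi \equiv_{V}\chi'$ for a strongly regular $\chi'$ whose bound variables are fresh. Here I would use Theorem \ref{RENAMEVAR}(b) to rename the quantified variable $x$ to a completely fresh $z$, obtaining $(Qx/V_0)\chi' \equiv_z (Qz/V_0)(\chi'[z/x]_{/\{x\}})$, which removes any clash between the outer bound variable and variables occurring (bound or free) inside $\chi'$. The key point — and this is where the uniform treatment of $Q=R$ pays off — is that Theorem \ref{RENAMEVAR} is stated for intermediate quantifiers as well, so no monotonicity hypothesis is needed and the argument is genuinely uniform across $\forall$, $\exists$, and each $R_i$. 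One then checks that the exceptional variable set $V$ grows only within $Bound(\psi')$, as required by the statement, and that the disjointness condition on bound variables is maintained.

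\textbf{The main obstacle} I anticipate is bookkeeping rather than conceptual: one must verify at each inductive step that the formulas to which Lemma \ref{REGULARSUBST} is applied are themselves \emph{regular}, since that lemma presupposes regularity of both $\varphi$ and $\varphi(\chi/\psi)$. This forces a careful ordering of the construction, first establishing regularity as an auxiliary invariant maintained throughout the recursion, and only afterward tightening to strong regularity by renaming. A secondary delicate point is tracking how the exceptional set $V$ of $V$-equivalence accumulates: each application of Theorem \ref{RENAMEVAR}(b) introduces new excepted variables, and one must confirm these all land inside $Bound(\psi')$ and can be chosen disjoint from $Bound(\psi)$, so that the final $\psi'$ meets both conclusions of the theorem simultaneously.
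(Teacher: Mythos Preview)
Your proposal is correct and follows essentially the same route as the paper: the paper's proof simply says that the arguments of 9.3 and 9.4 in \cite{CaiDecJan2009} go through once one has Theorem~\ref{RENAMEVAR} and Lemma~\ref{REGULARSUBST} available for the intermediate quantifiers, and your sketch is precisely an unpacking of that two-step argument (first regularize, then strongly regularize by renaming apart). The bookkeeping obstacle you flag---that Lemma~\ref{REGULARSUBST} presupposes regularity of the ambient formula---is exactly why the reference separates the construction into two passes rather than one monolithic induction, so your plan to secure regularity as an intermediate invariant is the right resolution.
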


\begin{proof}
The proofs of 9.3 and 9.4 from \cite{CaiDecJan2009} still work, using Theorem \ref{RENAMEVAR} and \ref{REGULARSUBST}
\end{proof}

\begin{thm}[Prenex normal form theorem]  Every $IF(\mathcal R)$ (resp. $IF^*(\mathcal R)$) formula is $\emptyset$-equivalent
to a strongly regular $IF(\mathcal R)$ (resp. $IF^*(\mathcal R)$) formula in prenex normal form.
\end{thm}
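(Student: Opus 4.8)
The plan is to reduce the problem to the strongly regular case and then hoist all quantifiers to the front by iterated extraction. First I would apply Theorem \ref{STRONGREG} to replace the given $IF(\mathcal R)$ (resp. $IF^*(\mathcal R)$) formula with a strongly regular formula to which it is equivalent, chosen so that all its bound variables are fresh. Strong regularity is exactly the hygiene the extraction machinery needs: no variable is quantified twice and no variable occurs both bound and free, so when a quantifier $(Qv/V)$ is dragged across a connective the variable $v$ cannot collide with anything. I would also note that strong regularity is inherited by subformulas and is preserved by all the transformations used below, so the whole argument can be carried out inside the strongly regular fragment.

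Next I would prove, by induction on the structure of a strongly regular formula, that it is equivalent to a strongly regular formula in prenex normal form. Atoms and negated atoms are already prenex, being quantifier-free. For a quantifier $\chi=(Qv/U)\theta$, the inductive hypothesis supplies a prenex $\theta'$ equivalent to $\theta$, and Lemma \ref{REGULARSUBST} lets me substitute inside the quantifier to get $(Qv/U)\theta'$, which is already prenex. The substantial cases are the connectives $\psi_1\lor_{/W}\psi_2$ and $\psi_1\land_{/W}\psi_2$: here the inductive hypothesis gives prenex forms $\phi_1,\phi_2$, Lemma \ref{REGULARSUBST} replaces the operands by them, and then I would apply the extraction rule of Lemma \ref{WEAKEXT} repeatedly, first peeling the leading quantifier of $\phi_1$ out past the connective (accumulating $W$ into the slash sets exactly as the rule prescribes), iterating until $\phi_1$ carries no quantifiers, and then doing the same to $\phi_2$. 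This uses, besides Lemma \ref{WEAKEXT} itself, the symmetric rule for extraction from the right-hand operand and the analogous rule for $\land_{/W}$, all established by the same argument. Each extraction strictly decreases the number of quantifiers trapped inside the connective, so the process terminates in a genuine prenex form, and each extraction preserves strong regularity (it merely relocates an already once-used quantifier and enlarges slash sets), keeping Lemmas \ref{WEAKEXT} and \ref{REGULARSUBST} applicable at the next round.

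The hard part is bookkeeping rather than a single deep idea. At every use of Lemma \ref{WEAKEXT} I must check its hypotheses, namely that the extracted variable does not occur in the other operand, in $V$, or in $W$; strong regularity together with the freshness guaranteed by Theorem \ref{STRONGREG} secures this, with Theorem \ref{RENAMEVAR} available to rename a bound variable out of the way should a clash ever threaten. The genuinely paper-specific delicacy, which is the heart of the appendix and which I would treat as already discharged at this point, is that every component lemma continues to hold for the intermediate quantifiers $R$ distinct from $\forall,\exists$, including the pathological behaviour of functions $F\colon X\to R^M$ that may take $\emptyset$ as a value; the prenex theorem itself only assembles these lemmas. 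Finally I would compose the various $V$- and $v$-equivalences produced along the way into the single $\emptyset$-equivalence (ordinary truth-equivalence) claimed in the statement; this last bookkeeping step is subtle precisely because $IF$ is not local, and it is carried out through the $Z$-equivalence calculus, relying on Lemma \ref{SED} (which shows how slash sets must absorb variables when a team is expanded) and on the renaming of Theorem \ref{RENAMEVAR} to align the fresh bound variables of the prenex form with the team at hand.
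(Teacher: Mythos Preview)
Your proposal is correct and follows essentially the same route as the paper's own proof, which is a one-line deferral to \cite{CaiDecJan2009}, 10.1: first pass to a strongly regular equivalent via Theorem~\ref{STRONGREG}, then reach prenex form by iterated use of renaming (Theorem~\ref{RENAMEVAR}) and weak extraction (Lemma~\ref{WEAKEXT}), with Lemma~\ref{REGULARSUBST} licensing the substitutions along the way. Your explicit mention of the need for the right-hand and conjunctive variants of Lemma~\ref{WEAKEXT} is apt (these are implicit in the paper's reference to \cite{CaiDecJan2009}); the only slight over-reach is invoking Lemma~\ref{SED} in the final composition of $Z$-equivalences into an $\emptyset$-equivalence---that step is handled purely within the $Z$-equivalence calculus and renaming, with \ref{SED} already having done its work inside the proofs of the individual extraction lemmas.
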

\begin{proof}
It can be proven as in \cite{CaiDecJan2009}, 10.1, finding first a strongly regular equivalent (Theorem \ref{STRONGREG}) and then using renaming (Theorem \ref{RENAMEVAR}) and quantifier extraction (Theorem \ref{WEAKEXT}).
\end{proof}

\subsection*{Strong extraction rule}

The prenex normal form theorem does not exhaust the discourse about prenex transformations. As observed in \cite{CaiDecJan2009}, the (weak) extraction rule that we gave above is somewhat unsatisfactory, in that it does not yield as a special case the quantifier extraction rule of first order logic. We want to show that a strong extraction rule -- analogous to \cite{CaiDecJan2009}, Theorem 8.3 -- can still be obtained for Engstr\"om quantifiers.

To make it work in the $IF^*(\mathcal R)$ case, we need some tools for eliminating variables from singleton slash sets of slashed disjunctions. The following somewhat unintuitive rule is similar (but less general) to an analogous (and as unintuitive) result for $\forall$, Lemma 8.2 of \cite{CaiDecJan2009}; the proof is a bit more involved.

\begin{lem} \label{RIRR}
Let $\psi,\chi$ be $IF^*(\mathcal R)$ formulas, and $v$ a variable not occurring in $\chi$ nor $V$. Then, for any intermediate quantifier $R\in\mathcal R$,
\[
(Rv/V)(\psi\lor_{/v}\chi_{/v})\equiv_v (Rv/V)(\psi\lor\chi_{/v}).
\]
\end{lem}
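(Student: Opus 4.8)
The plan is to prove the $Z$-equivalence for $Z=\{v\}$ by unwinding the semantic clauses on both sides and showing that the two disjunction clauses — one with slash set $\{v\}$, the other with empty slash set — yield the same satisfying teams, once we are inside the scope of $(Rv/V)$. First I would check the side condition: since $v\notin V$ and $v$ does not occur in $\chi$ (hence not in $\psi\lor\chi_{/v}$ in a way that conflicts), the non-triviality requirement $(FV(\psi\lor_{/v}\chi_{/v})\cup FV(\psi\lor\chi_{/v}))\cap\{v\}=\emptyset$ holds, so $\equiv_v$ makes sense. Then I fix a team $X$ with $v\notin dom(X)$ and a structure $M$, and argue both directions.

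The core observation I would exploit is this: after applying $(Rv/V)$, we evaluate a disjunction on the \emph{supplemented} team $X[F/v]$, where $F:X\to R^M$ is $V$-uniform. On this team the variable $v$ \emph{is} in the domain, and I claim that on such a team the slash set $\{v\}$ on the disjunction is redundant. Concretely, suppose $M,X[F/v]\models \psi\lor\chi_{/v}$ via a split $Y_1\cup Y_2=X[F/v]$ with $M,Y_1\models\psi$ and $M,Y_2\models\chi_{/v}$. To pass to the slashed disjunction $\lor_{/v}$ I must replace $Y_1,Y_2$ by $\{v\}$-uniform subsets that still cover $X[F/v]$ and still satisfy $\psi$, resp. $\chi_{/v}$. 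The natural move is to close each $Y_i$ under $\sim_v$ within $X[F/v]$, i.e. set $Y_i^* := \{s'\in X[F/v] \mid \exists s\in Y_i,\ s\sim_v s'\}$. Since $F$ is $V$-uniform and $v\notin V$, any two $\sim_v$-related assignments in $X[F/v]$ share the same underlying $X$-assignment $s_{-v}$; this lets me show $Y_i^* = (X_i)[F/v]$ for the projection $X_i=(Y_i)_{-v}$, exactly as in the proof of Lemma \ref{WEAKEXT}. The crucial point is that $\psi$ and $\chi_{/v}$ are insensitive to $v$ in the right sense: $v$ does not occur in $\chi$ (so $\chi_{/v}$'s satisfaction is governed by Lemma \ref{SED}, giving $M,Y_2^*\models\chi_{/v}$ from $M,X_2\models\chi$), while for $\psi$ I use downward closure (Theorem \ref{DWCR}) together with the fact that enlarging $Y_1$ to the $\sim_v$-closure adds assignments agreeing off $v$ — here I must verify $M,Y_1^*\models\psi$, which is the delicate step.

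The reverse direction is easier: a $\{v\}$-uniform split is in particular a split, so $M,X[F/v]\models\psi\lor_{/v}\chi_{/v}$ immediately gives $M,X[F/v]\models\psi\lor\chi_{/v}$, and the same $F$ witnesses $(Rv/V)$ on both sides. So the genuine content is the $\Rightarrow$ direction, i.e. manufacturing the $\{v\}$-uniform split from an arbitrary one.

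\textbf{The main obstacle} I anticipate is establishing $M,Y_1^*\models\psi$ after closing $Y_1$ under $\sim_v$: downward closure only gives satisfaction on \emph{sub}teams, whereas $Y_1^*\supseteq Y_1$ is an enlargement. This is precisely why the disjunct $\psi$ carries no extra slash on $v$ in the target formula, and why the lemma's subtlety lies there. The resolution should be to observe that $Y_1^*$ differs from $Y_1$ only by assignments that agree with members of $Y_1$ off $v$, and to argue — by induction on $\psi$, or by appeal to the explicit form of $Y_i^*=(X_i)[F/v]$ — that satisfaction of $\psi$ on $(X_1)[F/v]$ follows from satisfaction on the original $Y_1$ together with the structural description of the closure; I would route this through Lemma \ref{SED} and Lemma \ref{EXPSIM} (as in Lemma \ref{WEAKEXT}) rather than through a naive monotonicity argument, since $\psi$ need not be $v$-free. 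Handling the degenerate case where $F$ takes the value $\emptyset$ (so $X[F/v]$ loses some assignments) must also be checked, exactly as flagged in the appendix's opening remarks, but this is routine once the main construction is in place.
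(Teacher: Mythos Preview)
Your identification of the easy direction (a $\{v\}$-uniform split is a split) is correct, and your recognition that the hard direction requires manufacturing a $\{v\}$-uniform split from an arbitrary one is spot on. However, your proposed construction for the hard direction has a genuine gap that cannot be repaired along the lines you suggest.

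The problem is your symmetric treatment of $Y_1$ and $Y_2$. You propose to replace each $Y_i$ by its $\sim_v$-closure $Y_i^*=(X_i)[F/v]$, and you correctly note that the step $M,Y_1^*\models\psi$ is ``the delicate step'' since $Y_1^*\supseteq Y_1$ and downward closure goes the wrong way. But this step is not merely delicate; it is \emph{false} in general, and neither an induction on $\psi$ nor an appeal to Lemma~\ref{SED} can save it, because $\psi$ may have $v$ free (so \ref{SED} does not apply) and satisfaction is simply not preserved upward. Concretely: take $X=\{\emptyset\}$, $F(\emptyset)=\{a,b\}$ with $a\neq b$, $\psi$ the atom $v=a$, and $\chi$ any tautology. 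Then $Y_1=\{\emptyset(a/v)\}$, $Y_2=\{\emptyset(b/v)\}$ is a legitimate split, but $Y_1^*=X[F/v]$ does \emph{not} satisfy $v=a$.

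The paper's fix is an \emph{asymmetric} construction that exploits the asymmetry of the hypotheses (namely, that $v$ does not occur in $\chi$, while nothing is assumed about $\psi$). One enlarges only the $\chi$-side: from $M,Y_2\models\chi_{/v}$, Lemma~\ref{SED} gives $M,(Y_2)_{-v}\models\chi$, and applying \ref{SED} again yields $M,(Y_2)_{-v}[F'/v]\models\chi_{/v}$, where $F'=F\restriction(Y_2)_{-v}$. This set $(Y_2)_{-v}[F'/v]$ is $\{v\}$-uniform in $X[F/v]$ and contains $Y_2$. For the $\psi$-side one then takes the \emph{complement} $X[F/v]\setminus (Y_2)_{-v}[F'/v]$, which is automatically $\{v\}$-uniform and, crucially, is a \emph{subset} of $Y_1$; so downward closure (Theorem~\ref{DWCR}) gives satisfaction of $\psi$ there. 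In the toy example above this yields $Y_2^*=X[F/v]$ and the complement $\emptyset$, which satisfies $\psi$ by the empty team property---exactly as needed.
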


\begin{proof}
The condition $v\notin V$ ensures the non-triviality of the equivalence relation.

For the left-to-right implication, just notice that any $v$-uniform partition of a team is also a  partition (without further specifications).

From right to left. We have $M,X[F/v]\models(\psi\lor\chi_{/v})$, where $F$ is a $V$-uniform function $X\rightarrow R^M$ and $v\notin dom(X)$. Then there are teams $Y_1,Y_2$ such that $X[F/v]=Y_1\cup Y_2$ and $M,Y_1\models\psi,M,Y_2\models\chi_{/v}$. By Lemma \ref{SED}, $M,(Y_2)_{-v}\models \chi$. Notice that, since $v\notin dom(X)$, $(Y_2)_{-v}\subseteq X$. So we can expand the domain again using \ref{SED}: if $F'$ is the restriction of $F$ to $(Y_2)_{-v}$, we have $M,(Y_2)_{-v}[F'/v]\models\chi_{/v}$. Clearly $Y_2\subseteq (Y_2)_{-v}[F'/v]\subseteq X[F/v]$. We want to show that $(Y_2)_{-v}[F'/v]$ is $v$-uniform in $X[F/v]$. Let $s(a/v) \sim_{v} t(b/v)$, where $s\in (Y_2)_{-v}, t\in X, a\in F'(s), b\in F(t)$. Since $v\notin dom(s)=dom(t)$, then $s(a/v) \sim_{v} t(b/v)$ implies $s=t$ (this is an almost trivial case of lemma \ref{EXPSIM}). So, $b \in F(s)$. Thus $t(b/v)=s(b/v)\in (Y_2)_{-v}[F'/v]$, and we have proved that $(Y_2)_{-v}[F'/v]$ is $v$-uniform. Thus, also the complement $X[F/v]\setminus (Y_2)_{-v}[F'/v]$ is $v$-uniform. Since $X[F/v]\setminus (Y_2)_{-v}[F'/v]\subseteq Y_1$, by downward closure (\ref{DWCR}) $M,X[F/v]\setminus (Y_2)_{-v}[F'/v]\models\psi$. So, $M,X[F/v]\models\psi\lor_{/v}\chi_{/v}$, and thus $M,X\models (Rv/V)(\psi\lor_{/v}\chi_{/v})$.
 \end{proof}

For a comparison, the above-mentioned result 8.2 of \cite{CaiDecJan2009} allows transforming a disjunction of the form $\lor_{/Wv}$, occurring immediately below $\forall x$, into $\lor_{/W}$ (in our version for $R$, $W$ must be empty).  For $\exists$ there is a rather different property which allows the transformation of $\lor_{/Wv}$ into $\lor_{/W}$ below an existential quantifier $(\exists x/V)$: it works under the hypotheses that $W\subseteq V$ and $v\notin V$ (8.1 of \cite{CaiDecJan2009}). \\
\\
In the following we use the notation $\psi|_v$ to denote the formula which is obtained from $\psi$ when one adds variable $v$ to all the \emph{nonempty} slash sets.

\begin{lem} \label{VERTICALIZE}
 Let $\psi$ be an $IF^*(\mathcal R)$ formula without occurrences of variable $v$. Then 
\[
\psi_{/v} \equiv \psi|_v.
\]  
\end{lem}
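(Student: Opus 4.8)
The plan is to prove the equivalence by induction on the structure of $\psi$, after observing that the two transforms $\psi_{/v}$ and $\psi|_v$ differ \emph{only} at the slash sets that were empty in $\psi$: an empty quantifier slash set or disjunction slash set becomes $\{v\}$ in $\psi_{/v}$ but stays empty in $\psi|_v$. Since $\psi$ contains no occurrence of $v$, for every non-atomic $\psi$ the variable $v$ is free in $\psi_{/v}$ (slash sets of connectives and quantifiers contribute to $FV$), so any team suitable for both sides has $v\in dom(X)$; I will assume this throughout, the base cases being trivial as there $\psi_{/v}=\psi|_v=\psi$. The backbone of the argument is Lemma \ref{SED}, which I read in the form: for any team $T$ with $v\in dom(T)$ and any subformula $\theta$ of $\psi$ (so $v$ does not occur in $\theta$), $M,T\models\theta_{/v}\iff M,T_{-v}\models\theta$; in particular $M,X\models\psi_{/v}\iff M,X_{-v}\models\psi$.

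First the cases in which the transforms agree semantically or structurally. For a conjunction and for a universal quantifier the semantic clause ignores the slash set entirely, so $M,X\models\psi_{/v}$ reduces, via the clause and the induction hypothesis applied to the immediate subformulas on the team $X$ (resp.\ $X[M/u]$), to $M,X\models\psi|_v$. For a disjunction $\chi_1\lor_{/W}\chi_2$ or a quantifier $(Qu/V)\chi$ with $W\neq\emptyset$ (resp.\ $V\neq\emptyset$), both transforms produce the \emph{same} slash set $Wv$ (resp.\ $Vv$), so the required uniform covers (resp.\ uniform functions) range over identical classes and the induction hypothesis on the subformulas closes the case directly.

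The content lies in the two empty-slash cases. Consider $\psi=\exists u\,\chi$, so $\psi_{/v}=(\exists u/\{v\})\chi_{/v}$ and $\psi|_v=\exists u\,(\chi|_v)$. The direction $\Rightarrow$ is immediate: a $\{v\}$-uniform witness $F$ for $\psi_{/v}$ is, after the induction hypothesis turns $\chi_{/v}$ into $\chi|_v$ on $X[F/u]$, an admissible (unconstrained) witness for $\psi|_v$. For $\Leftarrow$, take any witness $G$ for $\psi|_v$, so $M,X[G/u]\models\chi|_v$; the induction hypothesis gives $M,X[G/u]\models\chi_{/v}$, and Lemma \ref{SED} (applied to $\chi$ and the team $X[G/u]$) yields $M,(X[G/u])_{-v}\models\chi$. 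Since $u\neq v$, one checks $(X[G/u])_{-v}=X_{-v}[\tilde G/u]$ where $\tilde G(r):=\bigcup\{G(s)\mid s\in X,\ s_{-v}=r\}$ is a nonempty-valued function on $X_{-v}$; hence $M,X_{-v}\models\exists u\,\chi=\psi$, and a final application of Lemma \ref{SED} to $\psi$ (with $X$ a $v$-expansion of $X_{-v}$) gives $M,X\models\psi_{/v}$. The disjunction case $\chi_1\lor\chi_2$ is handled by the same mechanism: from a cover $X=Y\cup Z$ witnessing $\psi|_v$, the induction hypothesis plus Lemma \ref{SED} give $M,Y_{-v}\models\chi_1$ and $M,Z_{-v}\models\chi_2$ with $Y_{-v}\cup Z_{-v}=X_{-v}$, so $M,X_{-v}\models\chi_1\lor\chi_2=\psi$, and Lemma \ref{SED} re-encodes this as $M,X\models\psi_{/v}$.

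The step I expect to be delicate is precisely this passage in the two empty-slash cases from an \emph{arbitrary} witnessing function (or cover) allowed by $\psi|_v$ to the \emph{$\{v\}$-uniform} one demanded by $\psi_{/v}$. A direct approach would explicitly uniformize $G$ along the $\sim_v$-classes and then fight with the fact that this enlarges the supplemented team, so that downward closure (Theorem \ref{DWCR}) runs the wrong way. Routing instead through Lemma \ref{SED} collapses the $v$-coordinate cleanly and sidesteps this entirely; pleasantly, and in contrast with Lemma \ref{RIRR}, no appeal to downward closure is needed. The only bookkeeping to verify carefully is the identity $(X[G/u])_{-v}=X_{-v}[\tilde G/u]$ (using $u\neq v$ and nonemptiness of the values) and the analogous $(Y\cup Z)_{-v}=Y_{-v}\cup Z_{-v}$, together with the applicability conditions of Lemma \ref{SED} ($v$ absent from the formula and from the $v$-removed team).
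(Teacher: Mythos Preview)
Your overall strategy --- structural induction routed through Lemma \ref{SED} --- is the same as the paper's. There is, however, a genuine gap: you only treat the two empty-slash cases $\exists u\,\chi$ and $\chi_1\lor\chi_2$, but the case $\psi = R u\,\chi$ with $R\in\mathcal R$ an intermediate quantifier and empty slash set is also one where $\psi_{/v}$ and $\psi|_v$ differ, and it is not covered by your analysis.

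Worse, your specific device for $\exists$ does not transfer to this missing case. Your function $\tilde G(r):=\bigcup\{G(s)\mid s\in X,\ s_{-v}=r\}$ lands in $\exists^M$ because a nonempty union of nonempty sets is nonempty; but for an arbitrary $R^M$ there is no reason a union of elements of $R^M$ should again lie in $R^M$ (take e.g.\ $R=\exists_{=3}$). The paper handles the $R$ case by picking, for each $r\in X_{-v}$, a single representative $s_r\in X$ with $(s_r)_{-v}=r$ and setting $\tilde G(r):=G(s_r)\in R^M$; this yields only the inclusion $X_{-v}[\tilde G/u]\subseteq (X[G/u])_{-v}$, and downward closure (Theorem \ref{DWCR}) is then invoked to conclude $M,X_{-v}[\tilde G/u]\models\chi$. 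So your remark that ``no appeal to downward closure is needed'' is correct for the $\exists$ and $\lor$ cases you actually wrote out, but downward closure (or a replacement hypothesis such as union closure of every $R\in\mathcal R$) is unavoidable for the $R$ case.
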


\begin{proof}
From left to right, this can be easily proved by induction on the structure of a formula (also thanks to the fact that the subformulas of $\psi$ have no occurrence of $v$).

Also the right-to-left implication can be proved by structural induction. Most cases have been taken care of in the proof of Theorem 6.7 of \cite{CaiDecJan2009}. We just prove the case $\psi = R_u\chi$, which is new, and the case  $\psi = \exists u\chi$, which we believe has not been treated correctly in \cite{CaiDecJan2009}.\footnote{The slashed cases $(Qy/Y)\chi_{|x}$ are uninteresting, because by definition, if $Y$ is nonempty, $(Qy/Y)\chi_{|x})_{|x} = (Qy/Y\cup\{x\})\chi_{|x}$, which by induction is equal to $(Qy/Y\cup\{x\})\chi_{/x} = ((Qy/Y)\chi)_{/x}$).} 

We begin with the latter. So, $\psi|_v$ is $\exists u\chi|_v$. If $M,X\models\psi|_v$ (where $X$ is a team whose domain contains $FV(\psi|_v)\cup\{v\}$) then there is a function $F:X\rightarrow \exists^M$ such that $M,X[F/u]\models\chi|_v$. By induction hypothesis, $M,X[F/u]\models\chi_{/v}$. By Lemma \ref{SED}, $M,X[F/u]_{-v}\models\chi$.  Now define a function $G:X_{-v}\rightarrow \exists^M$ by $G(s)=F(s(a_s/v))$, where $a_s$ is a chosen element of $M$ such that $s(a_s/v)\in X$. Then $X_{-v}[G/u]\subseteq X[F/u]_{-v}$. By downward closure (\ref{DWCR}), $M,X_{-v}[G/u]\models\chi$. So, $M,X_{-v}\models \exists u\chi$, and using Lemma \ref{SED} again, $M,X\models (\exists u\chi)_{/v}$. 

$R$ can be treated analogously, constructing a function $G:X_{-v}\rightarrow R^M$ from a function $F:X\rightarrow R^M$.
Just notice that, in case $F(s(a_s/v))\neq\emptyset$ for some $s$, we can define $G(s) := F(s(a_s/v))$; and in case no such value exists, define $G(s):=\emptyset$. In the special case that $F=\emptyset$, we can just set $G:=\emptyset$; the rest of the arguments holds because all of the teams involved have the same domain (the empty set of variables).
\end{proof}

The following is the strong extraction rule:

\begin{thm} \label{STRONGEXT}
Let $\mathcal R$ be a set of intermediate quantifiers. Let $Q$ be either $\exists,\forall$ or $R_i$. Let $\psi$, $\chi$ be $IF^*(\mathcal R)$ formulas, with $v$ not occurring in $\chi$ nor in $V$. Then
\[
(Qv/V)\psi\lor\chi \equiv_v (Qv/V)(\psi\lor\chi|_v). 
\]
\end{thm}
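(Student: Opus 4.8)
The plan is to reduce the strong extraction rule to the weak one (Lemma \ref{WEAKEXT}) and then clean up the result in two steps: first eliminate the slash that weak extraction introduces on the disjunction, and then adjust the slash sets inside $\chi$. Concretely, I would assemble the chain of $v$-equivalences
\[
(Qv/V)\psi\lor\chi \;\equiv_v\; (Qv/V)(\psi\lor_{/v}\chi_{/v}) \;\equiv_v\; (Qv/V)(\psi\lor\chi_{/v}) \;\equiv_v\; (Qv/V)(\psi\lor\chi|_v)
\]
and conclude by transitivity of $\equiv_v$. The hypotheses $v\notin\chi$ and $v\notin V$ ensure that the side conditions of every lemma invoked below are met, and a routine inspection of free-variable sets shows that $v$ is bound (or absent) in each formula of the chain, so clause (1) of $v$-equivalence holds for every adjacent pair.

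The first link is just Lemma \ref{WEAKEXT} instantiated with $W=\emptyset$, using $\lor_{/W}=\lor$ and $\lor_{/Wv}=\lor_{/v}$. The second link is the deslashing step, and here the argument splits on the shape of $Q$. For $Q=R$ an intermediate quantifier this link is precisely Lemma \ref{RIRR}. For $Q=\forall$ it is the $W=\emptyset$ instance of \cite{CaiDecJan2009}, Lemma 8.2, and for $Q=\exists$ it is the $W=\emptyset$ instance of \cite{CaiDecJan2009}, Lemma 8.1, whose side conditions $W\subseteq V$ and $v\notin V$ hold because $\emptyset\subseteq V$ and $v\notin V$ by assumption.

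For the third link I would argue purely compositionally, avoiding any appeal to regularity or to Lemma \ref{REGULARSUBST}. Since $v$ does not occur in $\chi$, Lemma \ref{VERTICALIZE} gives the truth-equivalence $\chi_{/v}\equiv\chi|_v$, that is, $M,Y\models\chi_{/v}$ iff $M,Y\models\chi|_v$ for every structure $M$ and every team $Y$ (including teams whose domain contains $v$). In the semantic clause for the outer formula $(Qv/V)(\psi\lor\,\cdot\,)$, the right disjunct is only ever tested for satisfaction on a subteam $Y_2$ of $X[F/v]$; replacing $\chi_{/v}$ by $\chi|_v$ thus leaves the splitting condition, and hence the entire satisfaction relation, unchanged, which delivers the final $\equiv_v$.

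The only genuinely delicate ingredient is the deslashing link, but its difficulty has already been absorbed into the prior Lemma \ref{RIRR} (whose proof rests on downward closure and a uniformity argument via Lemma \ref{EXPSIM}); within the present proof the main thing to watch is that the existential case relies on a different clean-up rule than the universal and intermediate cases, so the side conditions of \cite{CaiDecJan2009}, Lemma 8.1 must be checked explicitly. The cases $Q=\forall,\exists$ could alternatively be quoted directly from the $IF^*$ strong extraction rule of \cite{CaiDecJan2009}, Theorem 8.3, leaving $Q=R$ as the only new case to treat.
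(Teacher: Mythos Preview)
Your proof is correct and follows essentially the same three-step chain as the paper: weak extraction (Lemma \ref{WEAKEXT} with $W=\emptyset$), deslashing of $\lor_{/v}$ via Lemmas 8.1, 8.2 of \cite{CaiDecJan2009} and Lemma \ref{RIRR}, and finally replacing $\chi_{/v}$ by $\chi|_v$ through Lemma \ref{VERTICALIZE}. The only difference is cosmetic: for the last link the paper invokes a general substitution-of-$\emptyset$-equivalents result (\cite{CaiDecJan2009}, Theorem 6.5.3), whereas you unfold the semantic clause directly; both are valid and neither requires regularity.
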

\begin{proof}
The assertion $v\notin V$ guarantees that the nontriviality condition for $v$-equivalence is respected.

By the weak extraction rule \ref{WEAKEXT}, we have 
\[
(Qv/V)\psi\lor\chi \equiv_v (Qv/V)(\psi\lor_{/\{v\}}\chi_{/v}). 
\]
Use the lemmas for the elimination of slashes under quantification -- Lemma 8.1 of \cite{CaiDecJan2009} for $\exists$; Lemma 8.2 for $\forall$; our Lemma \ref{RIRR} for $R \in \mathcal R$ --  to transform the rightmost formula into
\[
(Qv/V)(\psi\lor\chi_{/v}).
\]
Finally, by Lemma \ref{VERTICALIZE} and substitution of $\emptyset$-equivalent formulas \citep[Theorem 6.5.3]{CaiDecJan2009}, the rightmost formula becomes (up to $\emptyset$-equivalence):
\[
(Qv/V)(\psi\lor\chi|_v).
\]
\end{proof}

From this, the classical extraction rule immediately follows:

\begin{cor}
Let $\mathcal R$ be a set of intermediate quantifiers. Let $Q$ be either $\exists,\forall$ or $R_i$. Let $\psi$, $\chi$ be $FO(\mathcal R)$ formulas (with empty slash sets), with $v$ not occurring in $\chi$. Then
\[
Qv\psi\lor\chi \equiv_\emptyset Qv(\psi\lor\chi). 
\]
\end{cor}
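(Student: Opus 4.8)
The plan is to obtain the statement as a quick specialization of the strong extraction rule (Theorem \ref{STRONGEXT}) followed by an upgrade of the equivalence relation. First I would observe that, since $\chi$ is an $FO(\mathcal R)$ formula, all of its slash sets (and those of its disjunctions) are empty; hence adding $v$ to its \emph{nonempty} slash sets changes nothing, i.e. $\chi|_v = \chi$. Applying Theorem \ref{STRONGEXT} with $V=\emptyset$ (and $Q$ either $\exists$, $\forall$ or some $R_i$) therefore gives directly
\[
Qv\psi\lor\chi \equiv_v Qv(\psi\lor\chi).
\]
I would also record that the two formulas have the same free variables: using $v\notin FV(\chi)$ one checks $FV(Qv\psi\lor\chi)=(FV(\psi)\setminus\{v\})\cup FV(\chi)=FV(Qv(\psi\lor\chi))$. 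Call this common set $U$ and note that $v\notin U$.

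The remaining work is to strengthen $\equiv_v$ to $\equiv_\emptyset$; that is, to pass from teams $X$ with $v\notin dom(X)$ (for which the equivalence is already known) to arbitrary teams suitable for the two formulas, i.e. those with $dom(X)\supseteq U$, which may now contain $v$. The guiding idea is that $v$ occurs only bound, so the value a team assigns to it is immaterial. Concretely, for each of the two formulas $\Phi$ and any suitable $X$ I would prove
\[
M,X\models\Phi \iff M,X_{\upharpoonright U}\models\Phi,
\]
noting that $v\notin dom(X_{\upharpoonright U})$. This is a locality property, and here the hypothesis that $\psi,\chi$ are first-order (empty slash sets) is essential: full $IF(\mathcal R)$ is not local, so the absence of slashes must be used. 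I would establish it by induction along the lines of Theorem \ref{TEOLOCDF}, the only delicate clause being the quantifier one; there the uniformity conditions are now vacuous, and one transfers a witnessing function from $X$ to $X_{\upharpoonright U}$ (and back) exactly as in that proof, invoking downward closure (Theorem \ref{DWCR}) to pass from a sub-team to the whole supplemented team.

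With this in hand the conclusion is immediate: for an arbitrary suitable $X$,
\[
M,X\models Qv\psi\lor\chi \iff M,X_{\upharpoonright U}\models Qv\psi\lor\chi \iff M,X_{\upharpoonright U}\models Qv(\psi\lor\chi) \iff M,X\models Qv(\psi\lor\chi),
\]
where the outer biconditionals are the locality step and the middle one is the already-established $\equiv_v$ equivalence, legitimate because $v\notin dom(X_{\upharpoonright U})$. Since this holds for all $M$ and all suitable $X$, we conclude $Qv\psi\lor\chi\equiv_\emptyset Qv(\psi\lor\chi)$. The main obstacle is precisely the $\equiv_v$-to-$\equiv_\emptyset$ upgrade: one must argue that a team's assignment to the bound variable $v$ cannot influence satisfaction, and this genuinely depends on working inside the first-order (slash-free) fragment and on downward closure, rather than on any property of the quantifiers in $\mathcal R$.
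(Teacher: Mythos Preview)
Your proposal is correct and follows essentially the same route as the paper: apply Theorem \ref{STRONGEXT} (using $\chi|_v=\chi$) to obtain $\equiv_v$, then invoke locality of $FO(\mathcal R)$ together with $v\notin FV(Qv\psi\lor\chi)\cup FV(Qv(\psi\lor\chi))$ to upgrade to $\equiv_\emptyset$. The only difference is that the paper simply cites ``$FO(\mathcal R)$ is a local logic'' as a fact, whereas you spell out how to prove this locality by adapting the inductive argument of Theorem \ref{TEOLOCDF} (noting that the uniformity constraints are vacuous for empty slash sets and using downward closure); this is a legitimate elaboration, not a different strategy.
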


\begin{proof}
Since $\chi$ is in $FO(\mathcal R)$, $\chi|_v=\chi$. So, by \ref{STRONGEXT}, $Qv\psi\lor\chi \equiv_v Qv(\psi\lor\chi)$. Since $v$ is not a free variable of $Qv\psi\lor\chi$ nor $Qv(\psi\lor\chi)$, and $FO(\mathcal R)$ is a local logic, the thesis follows.
\end{proof}

\subsection*{Primality test}

Next we turn to the primality test \citep[Theorem 18]{Sev2014}, which is a rather general syntactical criterion for understanding whether a regular and prenex $IF(\mathcal R)$ sentence is equivalent to some (unslashed) $FO(\mathcal R)$ sentence. These kinds of criteria, even though they are just sufficient and not necessary, are of interest, because it is known that the problem is undecidable already for 
 $IF$ logic. There is no need to enter here into the details of the criterion; suffice it to say that it was proven for the ``most'' quantifier, in \cite{Sev2014}, using only three equivalence rules:\\
1) Swapping mutually independent quantifiers\\
2) Making slash sets empty, whenever they only contain existentially quantified variables\\
3) Making the slash sets of universal quantifiers empty\footnote{Always possible when only truth, and not falsity is considered; this has been our approach in the whole paper.}.\\

So, we just need to prove that these rules hold in $IF(\mathcal R)$ to obtain the primality criterion. We do not manage to prove 1), and thus primality, for \emph{all} quantifiers: we have to add an (almost trivial) restriction.

\begin{definition}
A quantifier $Q$ is \textbf{emptyset-free} if, for every structure $M$, $\emptyset\notin Q^M$.
\end{definition}

\begin{thm}[Swapping independent quantifiers] \label{SWAPR}
Let $\mathcal R$ be intermediate quantifiers, and $R_i,R_j\in \mathcal  R$ emptyset-free. Suppose $u,v$ are two distinct variables. Then, for any $IF(\mathcal R)$ formula $\psi$:\\
a) $(R_iu/U)(R_jv/Vu)\psi\equiv_{uv}(R_jv/V)(R_iu/Uv)\psi$\\
b) $(\exists v/V)(R_ju/Uv)\psi\equiv_{uv}(R_ju/U)(\exists v/Vu)\psi$.\\
In case $R_i$ is not emptyset-free, we still have (with self-explaining notations):\\
a') $(R_iv/V)(R_ju/Uv)\psi \models_{uv} R_ju(R_iv/Vu)$\\
a'') $(R_ju/U)(R_iv/Vu)\psi\models_{uv}(R_iv/V)(R_ju/Uv)\psi$\\
b') $(R_ju/U)(\exists v/Vu)\psi\models_{uv}(\exists v/V)(R_ju/Uv)\psi$\\
b'') $(\exists v/V)(R_ju/Uv)\psi   \models_{uv}   R_ju(\exists v/Vu)\psi$
\end{thm}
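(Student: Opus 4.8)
The plan is to reduce both sides of each equivalence to a single symmetric condition, exploiting the fact that when $u\neq v$ the order of two successive supplementations is irrelevant. For functions $A,B\colon X\to\wp(M)$, write $X[A,B/uv]:=\{s(a/u)(b/v)\mid s\in X,\ a\in A(s),\ b\in B(s)\}$; then one has the ``rectangle'' identity $X[A/u][B^{+}/v]=X[A,B/uv]=X[B/v][A^{+}/u]$, where $B^{+}(s(a/u)):=B(s)$ and $A^{+}(s(b/v)):=A(s)$ are the lifts that ignore the freshly assigned variable. The key observation is that in $(R_iu/U)(R_jv/Vu)\psi$ the inner function $G$ is $Vu$-uniform, hence constant in $u$, so it is exactly such a lift $B^{+}$ of $B(s):=G(s(a/u))$; symmetrically, the inner function of $(R_jv/V)(R_iu/Uv)\psi$ is a lift of its outer choice. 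I would therefore prove the master claim that, for a team $X$ with $u,v\notin dom(X)$, each side of (a) is equivalent to: there exist a $U$-uniform $A\colon X\to R_i^M$ and a $V$-uniform $B\colon X\to R_j^M$ with $M,X[A,B/uv]\models\psi$.

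First I would check the two \emph{lift} directions, which are unconditional: given such $A,B$, one verifies directly (using Lemma \ref{EXPSIM}) that $B^{+}$ is $Vu$-uniform and $A^{+}$ is $Uv$-uniform, so the rectangle identity yields $M,X\models(R_iu/U)(R_jv/Vu)\psi$ and $M,X\models(R_jv/V)(R_iu/Uv)\psi$ respectively. Conversely, the two \emph{push-down} directions extract $A,B$ from a pair of nested supplementing functions: from the LHS pair $(F,G)$ set $A:=F$ and $B(s):=G(s(a_s/u))$ for a chosen $a_s\in F(s)$. This is well defined, independently of $a_s$, because $G$ is $Vu$-uniform (constant in $u$), and it is possible at all precisely because $F(s)\in R_i^M$ is nonempty, i.e. because $R_i$ is emptyset-free; $V$-uniformity of $B$ then follows from the $Vu$-uniformity of $G$ together with Lemma \ref{EXPSIM}. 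The symmetric extraction from the RHS pair needs $R_j$ emptyset-free. Combining the four directions gives (a) whenever both $R_i,R_j$ are emptyset-free, and (b) is the instance obtained by specializing one quantifier to $\exists$ and relabelling, since $\exists^M=\wp(M)\setminus\{\emptyset\}$ is automatically emptyset-free. Throughout, the hypotheses $v\notin V$ etc. serve only to guarantee the non-triviality requirement of $Z$-equivalence, exactly as in Lemmas \ref{WEAKEXT} and \ref{RIRR}.

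For the one-directional statements a'), a''), b'), b''), I would keep the unconditional lift directions and repair the blocked push-down. When the outer function $F$ may take the value $\emptyset$ (because one of the quantifiers is not emptyset-free), the extraction of the outer choice on the target side fails exactly on those $s$ with $F(s)=\emptyset$. The fix is to assign such $s$ an arbitrary value in the range of the outer quantifier while letting the corresponding inner function take the value $\emptyset$ -- legitimate precisely because the inner quantifier is the non-emptyset-free one, so $\emptyset$ lies in its local quantifier -- thereby discarding the offending assignments from the rectangle (this is also where Lemma \ref{SED} and downward closure, Theorem \ref{DWCR}, are used to transfer satisfaction of $\psi$). Since this repair destroys uniformity on the repaired class, it forces the slash set of the conclusion's outer quantifier down to $\emptyset$; this explains why the surviving implications carry an emptied slash set and run in only one direction.

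The main obstacle I anticipate is not the algebra of the rectangle identity but the disciplined handling of empty-valued supplementing functions: every uniformity check must be revisited for functions that may output $\emptyset$, and the degenerate cases (an empty local quantifier $R^M$, or an outer function that is empty everywhere) must be isolated so that they do not invalidate the constructions. This is precisely the source of the emptyset-free hypothesis and of the asymmetry between the full equivalences (a), (b) and their one-directional variants.
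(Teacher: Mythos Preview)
Your proposal is correct and follows essentially the same construction as the paper: your ``push-down'' $B(s):=G(s(a_s/u))$ is the paper's $G^*$, and your ``lift'' $A^{+}$ is the paper's $F^*(s):=F(s_{-v})$. The main organizational difference is that you factor through a symmetric master claim (existence of $U$-uniform $A$ and $V$-uniform $B$ on $X$), whereas the paper goes directly from one side to the other; your route makes the symmetry of (a) transparent and cleanly isolates where emptyset-freeness enters (the push-down, not the lift), while the paper's direct argument is shorter but leaves the one-directional variants a'), a''), b'), b'') essentially unexplained. Two small points: your invocation of Lemma \ref{SED} and downward closure in the repair step is unnecessary---with the repair you describe, the rectangle identity still holds exactly, so satisfaction of $\psi$ transfers without any closure argument; and your phrase ``$\emptyset$ lies in its local quantifier'' should really be read modulo the obvious case split (if $\emptyset\notin R_i^M$ in the given structure then no repair is needed at all, and if $\emptyset\in R_i^M$ then the repair is legitimate), which you do not make explicit but which is what makes the argument go through.
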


\begin{proof}
a,a',a'') From left to right. Suppose $M,X\models (R_iu/U)(R_jv/Vu)\psi$, with $u,v\notin dom(X)$. Then there are a $U$-uniform function $F:X\rightarrow R_i^M$ and a $Vu$-uniform function $G:X[F/u]\rightarrow R_j^M$ such that $M,X[F,G/u,v]\models\psi$. 

Define a function $G^*:X\rightarrow R_j^M$ by 
\[
G^*(s) = \bigcup\{G(s(a/u)) | a \in F(s)\}. 
\] 

Since $G$ is $Vu$-uniform, $G(s(a/u)) = G(s(b/u))$ for any $a,b\in M$. So in reality $G^*(s) = G(s(a/u))\in R_j^M$ for any $a \in F(s)$; or, in the special case that $G=\emptyset$, $G^*=\emptyset$. 
In case $\emptyset\notin R_j^M$, one can also prove that $G^*$ is a $V$-uniform function. Indeed, suppose $s\sim_V s'$. Since $G$ is $Vu$-uniform then $G(s(a/u)) = G(s'(b/u))$ for all $a,b\in M$; and since $\emptyset\notin R_i^M$, there exist $a\in F(s)$ and $b\in F(s')$. Thus $G^*(s) = G(s(a/u)) = G(s'(b/u)) = G^*(s')$, where $a\in F(s)$ and $b\in F(s')$.

Define then a function $F^*: X[G^*/v]\rightarrow R_i^M$ by $F^*(s) = F(s_{-v})$. This is well defined because $v \notin dom(X)$, so that $s_{-v}\in X$. Suppose $s\sim_{Uv}s'$. Then $s_{-v}\sim_U s'_{-v}$, so by the $U$-uniformity of $F$, $F^*(s) = F(s_{-v})=F(s'_{-v}) =F^*(s')$. Thus $F^*$ is $Uv$-uniform. One may check that $X[G^*,F^*/v,u] = X[F,G/u,v]$. So $M,X\models (R_jv/V)(R_iu/Uv)\psi$. From right to left one may use the same argument.\\
b,b',b'') The same proof method can be applied.
\end{proof}

This result is in contrast with the counterexample given in \cite{Eng2012}, sect.2.2, that falsifies the equivalence $M,X\models^b\exists_{=1} x (\exists y/x)\psi\Leftrightarrow M,X\models^b\exists y(\exists_{=1} x/y)\psi$ and shows the failure (even for emptyset-free quantifiers) of quantifier swapping under Engstr\"om's \emph{second} semantical clause.


\begin{lem} \label{EXISTENTIALDEPR}
Suppose in the $IF^*(\mathcal R)$ formula $\varphi$ there is an occurrence of a quantifier $(Qv/V)$, where $V$ contains only existentially quantified variables. Let $\varphi'$ be obtained from $\varphi$ by replacing $V$ with the empty set. Then $\varphi\equiv\varphi'$.
\end{lem}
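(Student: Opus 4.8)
The plan is to prove the two directions of the truth-equivalence $\varphi \equiv \varphi'$ separately, with the forward implication being routine and the backward one carrying all the content. Note first that the two formulas differ only at the single occurrence of $(Qv/V)$, which is replaced by $(Qv/\emptyset)$. Since every variable of $V$ is existentially quantified in $\varphi$ (hence bound, by a quantifier occurring above the distinguished one), we have $FV(\varphi)=FV(\varphi')$, so the same teams are suitable for both and it suffices to compare $M,X\models\varphi$ with $M,X\models\varphi'$ on all such $X$.

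For the direction $M,X\models\varphi \Rightarrow M,X\models\varphi'$ I would argue by \emph{monotonicity in the slash set}, which uses nothing about the existential character of $V$. The local point is that if $F$ is $V$-uniform and $V'\subseteq V$, then $F$ is $V'$-uniform (because $s\sim_{V'}s'$ implies $s\sim_{V}s'$); hence any witness for $(Qv/V)\chi$ is a witness for $(Qv/\emptyset)\chi$, and for $Q=\forall$ the slash set is semantically inert anyway. Since $IF^*(\mathcal R)$ admits negation only on atoms, every semantic clause is positive in the satisfaction of its immediate subformulas, so replacing a subformula by a weaker one cannot destroy satisfaction of the whole formula. This yields $\varphi \models \varphi'$ without any hypothesis on $V$.

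The substantive direction is $M,X\models\varphi' \Rightarrow M,X\models\varphi$. Here I would first \emph{strictify} the existential witnesses: using downward closure (Theorem~\ref{DWCR}) one replaces, top-down, each set-valued existential witness $F$ by a singleton-valued $f$ with $f(s)\in F(s)$, chosen constant on $\sim_W$-classes to preserve the relevant $W$-uniformity; downward closure guarantees the shrunken teams still satisfy the subformulas below (this is the lax-to-strict equivalence for downward closed logics noted in the footnote to the $IF$ satisfaction clause). I would then call a team $Y$ \emph{$V$-functional} if $t\sim_V t'$ implies $t=t'$ for all $t,t'\in Y$, and verify by a short induction along the evaluation that this property is preserved by all the team operations arising in a strict evaluation. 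Assuming (renaming bound variables by Theorem~\ref{RENAMEVAR} if necessary) that the variables of $V$ do not occur in $dom(X)$, the initial team is trivially $V$-functional; duplication, supplementation by a non-$V$ variable (for $\forall$, for an $R$-quantifier, or for a non-$V$ existential), passage to a subteam (for disjunctions), and restriction all preserve $V$-functionality, since $\sim_V$ forces agreement on every variable outside $V$; and supplementation of a $V$-variable preserves it \emph{precisely because} that existential is now strict, so the new value is functionally determined by the rest of the assignment.

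Consequently the team $Y$ feeding the distinguished occurrence $(Qv/\emptyset)$ in the strict evaluation of $\varphi'$ is $V$-functional, and the witnessing function $G\colon Y\to Q^M$ there is automatically $V$-uniform: if $t\sim_V t'$ then $t=t'$, whence $G(t)=G(t')$. Thus the very same strict evaluation also witnesses $\varphi$, where the occurrence now carries the slash set $V$, giving $M,X\models\varphi$. I expect the main obstacle to be the bookkeeping in these two steps: one must carry out the strictification globally while respecting all slash-set uniformity constraints at once, and one must run the $V$-functionality induction through disjunctions and through intervening $\forall$- and $R$-quantifiers, checking at each step that the variable introduced lies outside $V$ — which is exactly where the hypothesis that $V$ consists solely of existentially quantified variables is used.
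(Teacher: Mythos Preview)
Your proposal is correct and follows essentially the same strategy the paper invokes (via its reference to Sevenster's Lemma~12): your ``$V$-functionality'' invariant is precisely the paper's observation that ``each assignment in the team is uniquely determined by the values it assigns to universally and $R$-quantified variables,'' and your strictification of the existentials is the standard device needed to make that invariant go through. The paper adds only the remark that disjunctive splitting preserves the invariant, which you also cover; so the two arguments coincide in substance, yours being the more explicit write-up.
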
 

\begin{proof}
We can observe that the proof of the analogous result for prefixes (of $IF$ logic plus the ``most'' quantifier) given in \cite{Sev2014}, Lemma 12, still works.
One only has to add the observation that the act of splitting teams when disjunctions are analysed preserves the property that each assignment in the team is uniquely determined by the values it assigns to universally and $R$-quantified variables.
\end{proof}

A last, obvious generalization is the following:

\begin{lem} \label{UNIVERSLEMMAR}
Every $IF^*(\mathcal R)$ formula of the form $(\forall v/V)\psi$ is truth-equivalent to $\forall v\psi$. Every $IF^*(\mathcal R)$ formula of the form $(\psi_1 \land_{/W} \psi_2)$ is truth equivalent to $(\psi_1 \land \psi_2)$.
\end{lem}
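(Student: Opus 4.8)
The plan is to prove both statements of Lemma \ref{UNIVERSLEMMAR} by appealing directly to the semantic clauses for slashed quantifiers and slashed connectives, together with the downward closure property (Theorem \ref{DWCR}). The key observation is that the slash set on a \emph{universal} quantifier, and on a \emph{conjunction}, carries no information in the present ``truth-only'' setting, because the semantic clauses for $(\forall v/V)$ and for $\land_{/W}$ make no reference whatsoever to the slash set.

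First I would treat the universal quantifier. Recall that the clause $M,X\models(\forall v/V)\chi$ holds iff $M,X[M/v]\models\chi$, and the clause $M,X\models\forall v\chi$ holds iff $M,X[M/v]\models\chi$. These two right-hand sides are literally identical --- the duplicated team $X[M/v]$ does not depend on $V$ at all. Hence $M,X\models(\forall v/V)\psi$ iff $M,X\models\forall v\psi$ for every $M$ and suitable $X$, which is exactly truth-equivalence. I should just note that the slash set $V$ only ever affected the existential clause (through $V$-uniformity of the choice function) and the universal clause of $IF$ logic never imposed any uniformity constraint, so dropping $V$ is semantically inert.

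For the conjunction, the argument is even more immediate. The clause $M,X\models\chi_1\land_{/W}\chi_2$ holds iff $M,X\models\chi_1$ and $M,X\models\chi_2$, and this is verbatim the clause for the unslashed $\land$. The slash set $W$ on a conjunction, as with the universal quantifier, plays no role in the satisfaction condition; it only becomes relevant for slashed \emph{disjunction}, where $W$-uniform subteams are required. So $M,X\models(\psi_1\land_{/W}\psi_2)$ iff $M,X\models(\psi_1\land\psi_2)$ for all $M,X$, giving truth-equivalence.

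I do not expect any serious obstacle here: both parts reduce to the simple remark that the semantic clauses for $(\forall v/V)$ and for $\land_{/W}$ are insensitive to their slash sets. The only point requiring a word of care is to confirm that the lemma asserts \emph{truth}-equivalence (equivalence over all teams and structures) rather than the more delicate $Z$-equivalence, and that the claim is restricted to the truth-side semantics adopted throughout the paper; were one to track negative satisfiability as well, the universal slash could matter, but that falls outside the scope fixed earlier. Given these remarks, the proof is a direct unfolding of the definitions and needs no induction.
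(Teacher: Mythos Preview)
Your proposal is correct and matches the paper's treatment: the paper does not give a proof at all, labeling the lemma an ``obvious generalization,'' and your direct unfolding of the semantic clauses for $(\forall v/V)$ and $\land_{/W}$ is precisely the verification that makes it obvious. The mention of downward closure in your opening sentence is superfluous (you never use it, and it is not needed), but otherwise the argument is exactly right.
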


The three last results, when restricted to $IF(\mathcal R)$, justify the primality test for such logics.\\
\\

\bibliography{qfbib}{}

\newcommand{\SortNoop}[1]{}
\begin{thebibliography}{}

\bibitem[\protect\citeauthoryear{Aczel}{Aczel}{1975}]{Acz1975}
Aczel, P. (1975).
\newblock Quantifiers, games and inductive definitions.
\newblock In Kanger, S., editor, {\em Proceedings of the Third Scandinavian
  Logic Symposium}, Volume~82 of {\em Studies in logic and the foundations of
  mathematics}, pp.\  1–14. North-Holland Publishing Company, Amsterdam and
  Oxford, and American Elsevier Publishing Company, Inc., New York.

\bibitem[\protect\citeauthoryear{Andersson}{Andersson}{2002}]{And2002}
Andersson, A. (2002).
\newblock On second-order generalized quantifiers and finite structures.
\newblock {\em Annals of Pure and Applied Logic\/}~{\bf 115\/}(1), 1 -- 32.

\bibitem[\protect\citeauthoryear{Barbero, \& Sandu}{Barbero \&
  Sandu}{2018}]{BarSan2018}
Barbero, F., \& Sandu, G. (2018).
\newblock Interventionist counterfactuals on causal teams.
\newblock In {\em Proceedings 3nd International Workshop on Causal Reasoning
  for Embedded and safety-critical Systems Technologies, to appear}.

\bibitem[\protect\citeauthoryear{Burtschick, \& Vollmer}{Burtschick \&
  Vollmer}{1998}]{BurVol1998}
Burtschick, H.-J., \& Vollmer, H. (1998).
\newblock Lindstr{\"o}m quantifiers and leaf language definability.
\newblock {\em International Journal of Foundations of Computer Science\/}~{\bf
  09\/}(03), 277--294.

\bibitem[\protect\citeauthoryear{Caicedo, Dechesne, \& Janssen}{Caicedo
  et~al.}{2009}]{CaiDecJan2009}
Caicedo, X., Dechesne, F., \& Janssen, T. M.~V. (2009).
\newblock Equivalence and quantifier rules for logic with imperfect
  information.
\newblock {\em Logic Journal of the IGPL\/}~{\bf 17}, 91--129.

\bibitem[\protect\citeauthoryear{Dechesne}{Dechesne}{2005}]{Dec2005}
Dechesne, F. (2005).
\newblock {\em Game, Set, Maths: Formal investigations into logic with
  imperfect information}.
\newblock Ph.\ D. thesis, Tilburg University, Tilburg.

\bibitem[\protect\citeauthoryear{{\SortNoop{Does}}Van~der
  Does}{{\SortNoop{Does}}Van~der Does}{1993}]{Doe1993}
{\SortNoop{Does}}Van~der Does, J. (1993).
\newblock Sums and quantifiers.
\newblock {\em Linguistics and Philosophy\/}~{\bf 16\/}(5), 509--550.

\bibitem[\protect\citeauthoryear{Durand, Ebbing, Kontinen, \& Vollmer}{Durand
  et~al.}{2011}]{DurEbbKonVol2011}
Durand, A., Ebbing, J., Kontinen, J., \& Vollmer, H. (2011).
\newblock {Dependence logic with a majority quantifier }.
\newblock In Chakraborty, S. \& Kumar, A., editors, {\em IARCS Annual
  Conference on Foundations of Software Technology and Theoretical Computer
  Science (FSTTCS 2011)}, Volume~13 of {\em Leibniz International Proceedings
  in Informatics (LIPIcs)}, Dagstuhl, Germany, pp.\  252--263. Schloss
  Dagstuhl--Leibniz-Zentrum fuer Informatik.

\bibitem[\protect\citeauthoryear{Durand, Ebbing, Kontinen, \& Vollmer}{Durand
  et~al.}{2015}]{DurEbbKonVol2015}
Durand, A., Ebbing, J., Kontinen, J., \& Vollmer, H. (2015).
\newblock Dependence logic with a majority quantifier.
\newblock {\em Journal of Logic, Language and Information\/}~{\bf 24},
  289–305.

\bibitem[\protect\citeauthoryear{Durand, Hannula, Kontinen, Meier, \&
  Virtema}{Durand et~al.}{2016}]{DurHanKonMeiVir2016}
Durand, A., Hannula, M., Kontinen, J., Meier, A., \& Virtema, J. (2016).
\newblock Approximation and dependence via multiteam semantics.
\newblock In {\em Proceedings of the 9th International Symposium on Foundations
  of Information and Knowledge Systems}, Volume LNCS 9616, pp.\  271--291.
  Springer.

\bibitem[\protect\citeauthoryear{Engstr{\"o}m}{Engstr{\"o}m}{2012}]{Eng2012}
Engstr{\"o}m, F. (2012).
\newblock Generalized quantifiers in {D}ependence {L}ogic.
\newblock {\em Journal of Logic, Language and Information\/}~{\bf 21},
  299--324.

\bibitem[\protect\citeauthoryear{Engstr{\"o}m, \& Kontinen}{Engstr{\"o}m \&
  Kontinen}{2013}]{EngKon2013}
Engstr{\"o}m, F., \& Kontinen, J. (2013).
\newblock Characterizing quantifier extensions of dependence logic.
\newblock {\em Journal of Symbolic Logic\/}~{\bf 78 (1)}, 307--316.

\bibitem[\protect\citeauthoryear{Engstr{\"o}m, Kontinen, \&
  V{\"a}{\"a}n{\"a}nen}{Engstr{\"o}m et~al.}{2013}]{EngKonVaa2013}
Engstr{\"o}m, F., Kontinen, J., \& V{\"a}{\"a}n{\"a}nen, J. (2013).
\newblock Dependence logic with generalized quantifiers: Axiomatizations.
\newblock In {\em Logic, Language, Information, and Computation 20th
  International Workshop, WoLLIC 2013, Darmstadt, Germany}, Volume 8071, pp.\
  138--152. Springer Berlin Heidelberg.

\bibitem[\protect\citeauthoryear{Galliani}{Galliani}{2012}]{Gal2012}
Galliani, P. (2012).
\newblock Inclusion and exclusion dependencies in team semantics - on some
  logics of imperfect information.
\newblock {\em Annals of Pure and Applied Logic\/}~{\bf 163\/}(1), 68--84.

\bibitem[\protect\citeauthoryear{Galliani}{Galliani}{2013}]{Gal2013}
Galliani, P. (2013).
\newblock Epistemic operators in {D}ependence {L}ogic.
\newblock {\em Studia Logica\/}~{\bf 101}, 367--397.

\bibitem[\protect\citeauthoryear{Gr{\"a}del, \&
  V{\"a}{\"a}n{\"a}nen}{Gr{\"a}del \& V{\"a}{\"a}n{\"a}nen}{2013}]{GraVaa2013}
Gr{\"a}del, E., \& V{\"a}{\"a}n{\"a}nen, J. (2013).
\newblock Dependence and independence.
\newblock {\em Studia Logica\/}~{\bf 101}, 399--410.

\bibitem[\protect\citeauthoryear{Henkin}{Henkin}{1961}]{Hen1961}
Henkin, L. (1961).
\newblock Some remarks on infinitely long formulas.
\newblock In {\em Infinitistic methods}, Oxford-London-New York-Paris, pp.\
  167--183. Pergamon Press.

\bibitem[\protect\citeauthoryear{Hintikka, \& Sandu}{Hintikka \&
  Sandu}{1989}]{HinSan1989}
Hintikka, J., \& Sandu, G. (1989).
\newblock Informational independence as a semantical phenomenon.
\newblock In {Fenstad et al}, J.~E., editor, {\em Logic, Methodology and
  Philosophy of Science VIII}, pp.\  571--589. Elsevier Science Publishers B.V.

\bibitem[\protect\citeauthoryear{Hodges}{Hodges}{1997a}]{Hod97}
Hodges, W. (1997a).
\newblock Compositional semantics for a language of imperfect information.
\newblock {\em Logic Journal of the IGPL\/}~{\bf 5}, 539--563.

\bibitem[\protect\citeauthoryear{Hodges}{Hodges}{1997b}]{Hod97b}
Hodges, W. (1997b).
\newblock Some strange quantifiers.
\newblock In Mycielski, J., Rozenberg, G., \& Salomaa, A., editors, {\em
  Structures in Logic and Computer Science. Lecture Notes in Computer Sci.
  vol.1261}, pp.\  51--65. Springer-Verlag London, UK.

\bibitem[\protect\citeauthoryear{Hyttinen, Paolini, \&
  V{\"a}{\"a}n{\"a}nen}{Hyttinen et~al.}{2015}]{HytPaoVaa2012}
Hyttinen, T., Paolini, G., \& V{\"a}{\"a}n{\"a}nen, J. (2015).
\newblock Quantum team logic and bell's inequalities.
\newblock {\em Review of Symbolic Logic\/}~{\bf 08(04)}, 722--742.

\bibitem[\protect\citeauthoryear{Kontinen}{Kontinen}{2010}]{Kon2010}
Kontinen, J. (2010).
\newblock Definability of second order generalized quantifiers.
\newblock {\em Archive for Mathematical Logic\/}~{\bf 49\/}(3), 379--398.

\bibitem[\protect\citeauthoryear{Kontinen, \& V{\"a}{\"a}n{\"a}nen}{Kontinen \&
  V{\"a}{\"a}n{\"a}nen}{2009}]{KonVaa2009}
Kontinen, J., \& V{\"a}{\"a}n{\"a}nen, J. (2009).
\newblock On definability in dependence logic.
\newblock {\em Journal of Logic, Language and Information\/}~{\bf 18 (3)},
  317--332.

\bibitem[\protect\citeauthoryear{Lindstr\"om}{Lindstr\"om}{1966}]{Lin1966}
Lindstr\"om, P. (1966).
\newblock First order predicate logic with generalized quantifiers.
\newblock {\em Theoria\/}~{\bf 32}, 186--195.

\bibitem[\protect\citeauthoryear{Mann, Sandu, \& Sevenster}{Mann
  et~al.}{2011}]{ManSanSev2011}
Mann, A.~L., Sandu, G., \& Sevenster, M. (2011).
\newblock {\em Independence-{F}riendly Logic - a Game-Theoretic Approach},
  Volume 386 of {\em London Mathematical Society lecture note series}.
\newblock Cambridge University Press.

\bibitem[\protect\citeauthoryear{Mostowski}{Mostowski}{1957}]{Mos1957}
Mostowski, A. (1957).
\newblock On a generalization of quantifiers.
\newblock {\em Fundamenta Mathematicae\/}~{\bf 44\/}(2), 12--36.

\bibitem[\protect\citeauthoryear{Peters, \& Westerst{\aa}hl}{Peters \&
  Westerst{\aa}hl}{2006}]{PetWes2006}
Peters, S., \& Westerst{\aa}hl, D. (2006).
\newblock {\em Quantifiers in language and logic}.
\newblock Oxford University Press.

\bibitem[\protect\citeauthoryear{Sandu}{Sandu}{1993}]{San1993}
Sandu, G. (1993).
\newblock On the logic of informational independence and its applications.
\newblock {\em Journal of Philosophical Logic\/}~{\bf 22}, 29--60.

\bibitem[\protect\citeauthoryear{Scha}{Scha}{1981}]{Sch1981}
Scha, R. (1981).
\newblock Distributive, collective and cumulative quantification.
\newblock In Groenendijk, J. A.~G., Janssen, T. M.~V., \& Stokhof, M. B.~J.,
  editors, {\em Formal Methods in the Study of Language, Part 2}, pp.\
  483--512. Mathematisch Centrum.

\bibitem[\protect\citeauthoryear{Sevenster}{Sevenster}{2014}]{Sev2014}
Sevenster, M. (2014).
\newblock Dichotomy result for independence-friendly prefixes of generalized
  quantifiers.
\newblock {\em The Journal of Symbolic Logic\/}~{\bf 79(04)}, 1224--1246.

\bibitem[\protect\citeauthoryear{V{\"a}{\"a}n{\"a}nen}{V{\"a}{\"a}n{\"a}nen}{2007}]{Vaa2007}
V{\"a}{\"a}n{\"a}nen, J. (2007).
\newblock {\em {D}ependence {L}ogic: A New Approach to {I}ndependence
  {F}riendly Logic}, Volume~70 of {\em London Mathematical Society Student
  Texts}.
\newblock Cambridge University Press.

\end{thebibliography}

\vspace*{10pt}
\end{document}